\numberwithin{equation}{section}
\newtheorem{theorem}{Theorem}[section]
\newtheorem{lemma}[theorem]{Lemma}
\newtheorem{corollary}[theorem]{Corollary}
\newtheorem{proposition}[theorem]{Proposition}
\newtheorem{conjecture}[theorem]{Conjecture}
\newtheorem{example}[theorem]{Example}
\newtheorem{claim}{\noindent {\bf Claim}}
\newtheorem{question}{\noindent {\bf Question}}
\newtheorem{fact}{\noindent{\bf Fact}}
\newcommand{\NN}{{\mathbb N}}
\newcommand{\ZZ}{{\mathbb Z}}
\def\endproof{\hfill {\kern 6pt\penalty 500
\raise -0pt\hbox{\vrule \vbox to5pt {\hrule width 5pt
\vfill\hrule}\vrule}}}
\title[Classes of ordered structures]{Profile and hereditary classes of ordered relational structures}
\author[D.Oudrar] {Djamila Oudrar}
\thanks{*The author was supported by CMEP-Tassili grant}
\address{Faculty of Mathematics, USTHB, Algiers, Algeria}
\email {dabchiche@usthb.dz}
\author [M.Pouzet]{Maurice Pouzet}
\address{ICJ, Math\'ematiques, Universit\'e
Claude-Bernard Lyon1, 43 Bd. 11 Novembre 1918
F$69622$ Villeurbanne  cedex, France and University of Calgary, Department of Mathematics and Statistics, Calgary, Alberta, Canada T2N 1N4} \email{
pouzet@univ-lyon1.fr }
\date{\today}
\begin{document}

\subjclass[2000] {05C30, 06F99, 05A05, 03C13.}

\keywords {ordered set, well quasi-ordering, relational structures, profile, indecomposability, graphs, tournaments, permutations.}


\begin{abstract}
Let $\mathfrak{C}$ be a class of finite combinatorial structures. The \textit{profile} of $\mathfrak{C}$ is the function $\varphi _{\mathfrak{C}}$ which counts, for every integer $n$, the number
$\varphi _{\mathfrak{C}}(n)$ of members of $\mathfrak{C}$ defined on $n$ elements, isomorphic structures been identified. The \textit{generating function of}
$\mathfrak{C}$ is $\mathcal {H}_{\mathfrak{C}}(x):=\sum_{n\geqq 0}\varphi _{\mathfrak{C}}(n)x^{n}$.
Many results about the behavior of the function $\varphi _{\mathfrak{C}}$ have been obtained. Albert and Atkinson have shown that the generating series of several classes of permutations are algebraic.
In this paper, we  show how their results extend to classes of ordered binary relational structures; putting emphasis on the notion of hereditary well quasi order, we discuss some of their questions and answer  one.
 \end{abstract}
\maketitle
 \noindent {\bf AMS Subject Classification:} 05C30, 06F99, 05A05, 03C13.

\vspace{.08in} \noindent \textbf{Keywords}:  profile, well quasi-ordering, indecomposability, permutations.

\section{Introduction}
The context of this paper is  the enumeration of finite relational structures.
A relational structure $\mathcal{R}$ is \textit{embeddable} in a relational structure $\mathcal{R'}$, in notation $\mathcal{R}\leq \mathcal{R'}$, if $\mathcal{R}$ is isomorphic to an induced substructure of
 $\mathcal{R'}$. The embeddability relation is a quasi order. Several significant properties of relational structures or classes of relational structures can be uniquely expressed in term of this quasi order.
This is typically the case of hereditary classes: a class $\mathfrak{C}$ of structures is \textit{hereditary} if it contains every relational structure which can be embedded in some member of $\mathfrak{C}$.
 Interesting hereditary classes abound. In the late forties, Fra\"{\i}ss\'{e}, following the work of Cantor, Hausdorff and Sierpinski, pointed out the role of the quasi-ordering of embeddability and hereditary
classes in the theory of relations (see his book \cite{fraisse} for an illustration). Recent years have seen a renewed interest for the study of these classes, particularly those made of finite structures.
Many results have been obtained. Some are about
obstructions allowing to define these classes, others on the  behavior of the function $\varphi _{\mathfrak{C}}$, the \textit{profile} of $\mathfrak{C}$ which counts, for every integer $n$, the number
$\varphi _{\mathfrak{C}}(n)$ of
members of $\mathfrak{C}$ defined on $n$ elements, isomorphic structures being identified. General counting results have been obtained, as well
 as precise results,  for graphs, tournaments and  ordered graphs (see the survey \cite{klazar}). Enumeration results on permutations, motivated by the \textit{Stanley-Wilf conjecture},
solved by Marcus and Tard\"os (2004), fall also under this frame, an  important fact  due to Cameron \cite{cameron}.  Indeed, to each permutation  $\sigma$ of $[n]:=\{1, \dots, n\}$ we may associate the relational structure $C_{\sigma}:=([n], \leq, \leq_{\sigma})$, that we call \emph{bichain},  made of two linear orders on $[n]$ ($\leq$ being the natural order on $[n]$
and $\leq_{\sigma}$ the linear order defined by
$i\leq_{\sigma} j$
if and only if  $\sigma(i) \leq \sigma(j)$). As it turns out, the order defined on permutations and the embeddability between bichains coincide (see Subsection \ref {permutation-bichain} for details and examples).

 In this paper,  we  show how some results obtained by Albert and Atkinson \cite{A-A} for classes of permutations extend to classes of ordered binary relational structures. We prove notably Theorem \ref{theo:algebraic}. For this purpose,  we recall in Section \ref{sect:basic} some basic definitions of the theory of relations, we survey in Section \ref{sect:bichains} some results concerning classes of permutations and show how permutations are related to relational structures. Then, we illustrate  the role
of indecomposable structures (see Section \ref{sect:indecomposable}) and of well quasi order (see Section \ref{sect:well-quasi-order})   in enumeration results. Finally, in Section \ref{sect:conjecture}, we present a conjecture and a partial solution, a special case answering a question of Albert and Atkinson \cite{A-A}.

Our results have been presented at  the international conference on Discrete Mathematics and Computer Science (Dimacos'11)  held in Mohammedia, Morocco, May-5-8, 2011, and at the  International Symposium on Operational Research (Isor'11),  held in Algiers, Algeria , May 30-June 2, 2011 \cite{oudrar-pouzet}. We are pleased to thanks the organizers of these meetings for their help.

\section{Basic notions, embeddability, hereditary classes and profile}\label{sect:basic}
Our terminology agree with \cite{fraisse}.
Let $n$ be a positive integer. A $n$-{ary relation} with \emph{domain} $E$ is a subset $\rho$ of the $n$-th power $E^{n}$ of $E$; for $n=1$ and  $n=2$  we use the words  \emph {unary relation } and
\emph {binary relation}, in this later case we rather set $x\rho y$ instead of $(x,y)\in \rho$. A \textit{relational structure} with \emph{domain} $E$ is a pair $\mathcal{R}:= (E,(\rho_i)_{i\in I})$ made of a set $E$
and a family $(\rho_i)_{i\in I}$ of $n_i$-ary relations
$\rho_i$ on $E$, each $\rho_i$ being a subset of $E^{n_i}$. The family $\mu:=(n_i)_{i\in I}$ is the \emph{signature} of $\mathcal{R}$. We will denote by $V(\mathcal R)$ the domain of $\mathcal R$.  We denote by $\overline\Omega_\mu$ the class of these structures and by $\Omega_\mu$ the subclass of  the finite ones. A relational structure $\mathcal{R}$ is \emph{ordered} if it can be expressed as $\mathcal{R}:=(E,\leq,(\rho_j)_{j\in J})$ where $"\leq"$ is a linear order on $E$  and the $\rho_j$'s are $n_j$-ary relations;
the (truncated) signature in this case is $\mu=(n_j)_{j\in J}$.  A relational structure $\mathcal{R}:=(E,(\rho_i)_{i\in I})$ is a  \emph{binary relational structure}, \emph{binary structure} for short, if each $\rho_i$ is a binary relation; the class of those finite
binary structures will be denoted by $\Omega_I$ instead of $\Omega_\mu.$
Basic examples of ordered binary structures are chains $(J=\emptyset)$,
 bichains ($J=\{1\}$ and $\rho_1$ is a linear order) and multichains ($\rho_j$ is a linear order for all $j\in J$). We denote by $\Theta_d$ the collection of finite ordered binary structures made of a linear order and $d$ binary relations. Let $\mathcal{R}:= (E,(\rho_i)_{i\in I})$ be a relational structure;  the
\emph{substructure induced by  $\mathcal{R}$ on a subset $A$ of $E$},
simply called the \emph{restriction of $\mathcal{R}$ to $A$}, is the relational structure
$R\restriction_A :=(A,({\rho_i}\restriction_A)_{i\in I}),$ where ${\rho_i}\restriction_A:={\rho_i}\cap A^{n_i}.$
Let $\mathcal{R}:= (E,(\rho_i)_{i\in I})$ and $\mathcal{R'}:= (E',(\rho'_i)_{i\in I})$ be two relational structures of the same signature $\mu:=(n_i)_{i\in I}.$ A map $\emph f:E\rightarrow E'$ is an
 \emph{isomorphism from $\mathcal{R}$ onto $\mathcal{R'}$} if $f$ is bijective and $(x_1,\ldots,x_{n_i})\in \rho_i$  if and only if  $(f(x_1),\ldots,f(x_{n_i}))\in \rho'_i$ for every
$(x_1,\ldots,x_{n_i})\in E^{n_i}$, $i\in I.$
The relational structure $\mathcal{R}$ is \emph{isomorphic} to $\mathcal{R'}$ if there is some isomorphism from $\mathcal{R}$ onto $\mathcal{R'}$, it is \emph{embeddable} into $\mathcal{R'}$, and we set
$\mathcal{R}\leq \mathcal{R'}$, if $\mathcal{R}$ is isomorphic to some restriction of $\mathcal{R'}.$ The embeddability relation (called ``abritement'' by  \emph{Fra\"{\i}ss\'{e}} in french) is a quasi-order.
A class $\mathfrak{C}$ of relational structures is \emph{hereditary} if $\mathcal{R} \in \mathfrak{C}$ and $\mathcal{S}\leq \mathcal{R}$ imply $\mathcal{S}\in \mathfrak{C};$ relational structures which are not in
$\mathfrak{C}$ are \emph{obstructions} to $\mathfrak{C}$. The \emph{age}  of a relational structure $\mathcal R$ is the class $Age (\mathcal R)$ of finite $\mathcal S$ which are embeddable into $\mathcal R$ (equivalently, this is the set of finite restrictions of $\mathcal R$ augmented of their isomorphic copies). An age is non-empty, hereditary and up-directed (that is for every $\mathcal S, \mathcal S'\in Age(\mathcal R)$ there is some $\mathcal T \in Age( \mathcal R)$ which embeds $\mathcal S$ and $\mathcal S'$). In the terminology of posets, this is an \emph{ideal} of $\Omega_\mu$. If the signature is finite, every ideal of $\Omega_\mu$ is the age of some relational structure (\emph{Fra\"{\i}ss\'{e}} 1954). If $\mathfrak{B}$ is a subset of $\Omega_\mu$ then $Forb(\mathfrak{B})$ denotes the subclass of members of $\Omega_\mu$ which embed no member of
$\mathfrak{B}$. Clearly, $Forb(\mathfrak{B})$ is an hereditary class. Moreover, every hereditary subclass $\mathfrak{C}$ of $\Omega_\mu$ has this form. This fact, due to \emph{Fra\"{\i}ss\'{e}}, is based on the notion
of bound:  a \textit{bound} of an hereditary subclass $\mathfrak{C}$ of $\Omega_\mu$  is every finite $\mathcal{R}$ not in $\mathfrak{C}$ such that every  $\mathcal{R'}$ which
strictly embeds into $\mathcal{R}$ belongs to $\mathfrak{C}$.
  Clearly, every finite obstruction to $\mathfrak{C}$  contains a bound. Hence, if  $\mathfrak{B(C)}$ denotes the collection
 of  bounds of $\mathfrak{C}$ considered up to isomorphism then $\mathfrak{C}=Forb(\mathfrak{B(C)})$.\\  The \textit{profile} of an hereditary class $\mathfrak{C}$ is the function
$\varphi _{\mathfrak{C}}:\mathbb{N\longrightarrow N}$ which counts, for every $n$,
 the number of members of $\mathfrak{C}$
defined on $n$ elements, isomorphic structures been identified. The \textit{generating function for }$\mathfrak{C}$ is $\mathcal {H}_{\mathfrak{C}}(x):=\sum_{n\geqq 0}\varphi _{\mathfrak{C}}(n)x^{n}$.
These two notions are the specialization to hereditary classes of basic notions in enumeration. Many results on the enumeration of classes of permutations are about the enumeration of relational structures.
Indeed, as mentioned in the introduction, permutations can be considered as  special cases of binary structures,  and more specifically ordered binary structures, in fact bichains. We introduce these notions below and point out the  relationship between permutations and
bichains in the next section.\\
%
%
%

\section{Permutations, bichains and their profile}\label {sect:bichains}
\subsection{Permutations}
Let $n$ be a non negative integer. Let $\mathfrak S_n$ be the set of permutations on $[n]:=\{1,\dots, n\}$ and $\mathfrak S:=\bigcup_{n\in \mathbb  N} \mathfrak S_n.$ An order relation on
$\mathfrak S$ is defined as follows: the permutation $\pi$ of $[n]$ \emph{contains} the permutation $\sigma$ of $[k]$ and we write $\sigma \leq \pi$  if some subsequence of $\pi$
of length $k$ is order isomorphic to $\sigma$. More precisely,
 $\sigma \leq \pi$ if there exist integers $1\leq x_1 <\dots<x_k\leq n$ such that for $1\leq i,j\leq k,$\\
$$\sigma(i)< \sigma(j)~~\text{if and only if}~~ \pi(x_i)< \pi(x_j).$$
For example, $\pi:=391867452$ contains $\sigma:=51342,$ as it can be seen by considering the subsequence $91672$ ($=\pi(2),\pi(3),\pi(5),\pi(6),\pi(9)$).

A subset $\mathfrak C$ of $\mathfrak S$ is \emph{hereditary} if $\sigma <\pi \in\mathfrak C$ implies $\sigma \in \mathfrak C$. Its counting function, that we call the \emph{profile} of $\mathfrak C$, is
$\varphi _{\mathfrak{C}}(n):=|\mathfrak C\cap \mathfrak S_n|$.
How much does $\varphi _{\mathfrak{C}}(n)$ drop from $\varphi _{\mathfrak{S}}(n)=n!$ if $\mathfrak C \neq \mathfrak S?$ The \emph{Stanley-Wilf} conjecture asserted that it drops to exponential growth.
The conjecture was proved in $2004$ by \emph{Marcus and Tard\"os} \cite{Marcus}:

\begin{theorem}
 If $\mathfrak C$ is a proper hereditary set of permutations, then, for some constant $c,~~\varphi_{\mathfrak{C}}(n)< c^n$ for every $n$.
\end{theorem}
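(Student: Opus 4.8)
The statement to be proved is that a proper hereditary set of permutations has at most exponential growth. The plan is to reduce the problem to a purely combinatorial statement about $0$–$1$ matrices, following the Marcus–Tardős strategy, and then prove that statement by a clever block-decomposition argument. First I would reformulate: a permutation $\pi$ of $[n]$ corresponds to an $n\times n$ permutation matrix $P_\pi$, and $\sigma\le\pi$ in the permutation order is equivalent to saying that $P_\sigma$ is a submatrix of $P_\pi$ in the sense of deleting rows and columns. So if $\mathfrak C$ is proper and hereditary, there is a permutation $\tau\notin\mathfrak C$; since $\mathfrak C$ is hereditary, every $\pi\in\mathfrak C$ avoids $\tau$, i.e. $P_\tau$ is not a submatrix of $P_\pi$. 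Thus it suffices to bound the number of $n\times n$ permutation matrices that do not contain a fixed $k\times k$ permutation matrix $P_\tau$ as a submatrix, and in fact it suffices to bound the \emph{maximum number of $1$'s}, call it $\mathrm{ex}(n,P_\tau)$, in \emph{any} $n\times n$ $0$–$1$ matrix avoiding $P_\tau$: indeed a permutation matrix has exactly $n$ ones, and a counting argument (splitting the $n$ rows/columns into blocks and observing that if too many permutation matrices avoided $P_\tau$ then some $0$–$1$ matrix with many $1$'s would too, via a pigeonhole on which cells are "hit") converts a linear bound $\mathrm{ex}(n,P_\tau)\le c_\tau\, n$ into the desired bound $\varphi_{\mathfrak C}(n)\le c^n$.

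The heart of the argument is therefore the Füredi–Hajnal-type estimate: for every permutation matrix $P$ of size $k$ there is a constant $c_P$ with $\mathrm{ex}(n,P)\le c_P\, n$. The key step I would carry out is the self-reducing block decomposition. Partition the $n\times n$ matrix $A$ (avoiding $P$, where $P$ is $k\times k$) into blocks of size roughly $k^2\times k^2$, giving an $(n/k^2)\times(n/k^2)$ grid of blocks. Call a block \emph{wide} if its columns meet at least $k$ distinct columns that contain a $1$ somewhere in that block-row-strip, \emph{tall} analogously, and classify blocks accordingly. The crucial combinatorial observation is: if in the "contracted" $(n/k^2)\times(n/k^2)$ matrix (where a block is marked $1$ iff it is nonempty) there were a copy of $P$ using only blocks that are simultaneously wide and tall, one could expand it — picking $k$ distinct rows and $k$ distinct columns inside those blocks — to produce an actual copy of $P$ inside $A$, a contradiction. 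Hence the marked matrix of wide-and-tall blocks avoids $P$, so by induction on $n$ it has few $1$'s; the remaining blocks (not wide, or not tall) contribute few $1$'s per block-row or block-column because "not wide" caps the number of occupied columns at $k-1$, etc. Summing the contributions and solving the resulting recurrence $f(n)\le (n/k^2)^2\cdot(\text{small}) + k^2\cdot f(n/k^2)$-type inequality yields $f(n)=\mathrm{ex}(n,P)=O(n)$, with the constant depending on $k$.

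The main obstacle is getting the bookkeeping in the block decomposition exactly right: one must choose the block size as a function of $k$ so that "wide" and "tall" genuinely force an embedded copy of $P$ under contraction, and so that the non-wide/non-tall blocks, together with the recursively bounded wide-and-tall part, really do sum to a linear bound rather than $n\log n$ or worse. This is where the choice of $k^2$ (rather than $k$) for the block size matters, and where a careful double count of "how many nonempty blocks can a single strip contain" is needed. Once the linear bound $\mathrm{ex}(n,P_\tau)\le c_\tau n$ is in hand, the passage back to counting permutations is routine: divide $[n]$ into $n/k$ intervals in both coordinates, record for each of the $(n/k)^2$ boxes whether it contains a point of the permutation (this is a $0$–$1$ matrix avoiding $P_\tau$, hence with at most $c_\tau (n/k)$ ones, so at most $\binom{(n/k)^2}{c_\tau n/k}$ choices), and for each occupied box there are at most $k!$ ways to place the points inside; multiplying gives $\varphi_{\mathfrak C}(n)\le \binom{(n/k)^2}{c_\tau n/k} (k!)^{c_\tau n/k}\le c^n$ for a suitable $c$, which is the claimed bound.
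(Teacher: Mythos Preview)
The paper does not prove this theorem at all: it is stated as a known result (``The conjecture was proved in 2004 by \emph{Marcus and Tard\H{o}s}'') with a citation to \cite{Marcus}, and the exposition immediately moves on. So there is no ``paper's own proof'' to compare against.

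That said, your outline is a faithful sketch of the actual Marcus--Tard\H{o}s argument, including the Klazar reduction from the F\"uredi--Hajnal linear bound $\mathrm{ex}(n,P_\tau)=O(n)$ to the exponential bound on $\varphi_{\mathfrak C}(n)$. Two points of imprecision are worth tightening if you write this up. First, your definition of ``wide'' is garbled: the standard definition is that a $k^2\times k^2$ block is \emph{wide} if it contains $1$'s in at least $k$ of its own columns (not ``columns that contain a $1$ somewhere in that block-row-strip''), and analogously for \emph{tall}. The crucial counting lemma is then that in any single block-column, the number of wide blocks is at most $(k-1)\binom{k^2}{k}$, since otherwise $k$ wide blocks would share a common $k$-subset of columns and one could read off a copy of $P_\tau$ there; the symmetric bound holds for tall blocks in a block-row. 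Second, the recurrence you wrote is slightly off in form: the wide-or-tall blocks contribute $O(n)$ ones directly (via the lemma just stated), while the contracted matrix of nonempty blocks that are neither wide nor tall avoids $P_\tau$ and has side $\lceil n/k^2\rceil$, so the recursion is on $\mathrm{ex}(\lceil n/k^2\rceil,P_\tau)$ multiplied by the maximal number of $1$'s per non-wide, non-tall block, namely $(k-1)^2$. Solving this gives the linear bound. With these corrections your plan is complete and correct; it simply goes well beyond what the paper itself attempts.
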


\emph{Kaiser and Klazar} \cite{K-K} proved that if $\mathfrak C$ is hereditary, then, either $\varphi_{\mathfrak{C}}$ is bounded by a polynomial and in this case is a polynomial, or is bounded below by an
exponential, in fact the \emph{generalized Fibonacci function} $F_{n,k}$.\\
We recall that the generalized Fibonacci number is given by the recurrence $F_{n,k}=0$ for $n<0$, $F_{0,k}=1$ and $F_{n,k}=F_{n-1,k}+F_{n-2,k}+\ldots+F_{n-k,k}$ for $n>0$. $F_{n,k}$ is the coefficient of $x^n$
in the power series expansion of the expression $\dfrac{1}{1-x-x^2-\ldots-x^k}.$ The Kaiser and Klazar theorem reads as follow:

\begin{theorem}
If $\mathfrak C$ is an hereditary set of permutations, then exactly one of the four cases occurs.
\begin{enumerate}
\item For large $n$, $\varphi_{\mathfrak{C}}(n)$ is eventually constant.
\item There are integers $a_0,\ldots,a_k,$ $k\geq 1$ and $a_k>0$, such that $\varphi_{\mathfrak{C}}(n)=a_0 \dbinom{n}{0}+\ldots+a_k \dbinom{n}{k}$ for large $n$. Moreover, $\varphi_{\mathfrak{C}}(n)\geq n$ for
 every $n.$
\item There are constants $c,k$ in $\mathbb{N}$, $k\geq 2$, such that $F_{n,k}\leq \varphi_{\mathfrak{C}}(n)\leq n^c F_{n,k}$ for every $n$.
\item One has $\varphi_{\mathfrak{C}}(n)\geq 2^{n-1}$ for every $n$.
\end{enumerate}
\end{theorem}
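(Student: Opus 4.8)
The numbers $F_{n,k}$ and $2^{n-1}$ in the statement have a transparent combinatorial source, and pinning it down is the first thing I would do. Among the \emph{layered} permutations --- the direct sums $\delta_{a_1}\oplus\cdots\oplus\delta_{a_r}$ of decreasing blocks --- those of size $n$ whose blocks all have size at most $k$ are in bijection with the compositions of $n$ into parts $\le k$, hence number exactly $F_{n,k}$; with no bound on the block sizes one gets all $2^{n-1}$ compositions of $n$. Each class $\mathcal{L}_k$ of layered permutations with blocks $\le k$ is hereditary (a pattern of a layered permutation is layered with no larger blocks) and has profile exactly $F_{n,k}$ for \emph{every} $n$; the same holds for $\mathcal{L}_\infty$ and for the images of these families under the dihedral symmetries of the pattern order, and these are the minimal classes realising the lower bounds in cases (3) and (4). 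So the theorem asserts that the profile of an arbitrary hereditary $\mathfrak{C}$ is, up to a polynomial factor, comparable to one of the links in the chain
\[
1\ \prec\ (\text{non-constant polynomials})\ \prec\ F_{n,2}\ \prec\ F_{n,3}\ \prec\ \cdots\ \prec\ 2^{\,n-1},
\]
and I would organise the proof around showing that $\varphi_{\mathfrak{C}}$ cannot sit strictly, and persistently, between two consecutive links.

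The plan is to establish a short list of \textbf{no-dip barrier lemmas}: (i) if $\varphi_{\mathfrak{C}}(n_0)\le n_0-1$ for even one $n_0\ge 2$, then $\varphi_{\mathfrak{C}}$ is eventually constant; and (ii) for each $k\ge 2$, if $\varphi_{\mathfrak{C}}(n_0)<F_{n_0,k}$ for even one sufficiently large $n_0$, then $\varphi_{\mathfrak{C}}(n)\le n^{c}F_{n,k-1}$ for all $n$ (with the convention $F_{n,1}\equiv 1$, so $k=2$ is the polynomial regime). Granting these, the theorem assembles quickly. If $\mathfrak{C}$ is finite we are in case (1); otherwise let $k$ (a positive integer, or $k=\infty$) be the largest value with $\varphi_{\mathfrak{C}}(n)\ge F_{n,k}$ for all large $n$. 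If $k$ is finite there is an arbitrarily large $n_0$ with $\varphi_{\mathfrak{C}}(n_0)<F_{n_0,k+1}$, so (ii) gives $\varphi_{\mathfrak{C}}(n)\le n^{c}F_{n,k}$ for all $n$; for $k\ge 2$ this, with the heredity lower bound, is exactly case (3), while for $k=1$ it says $\varphi_{\mathfrak{C}}$ is eventually an integer combination of the $\binom{n}{i}$ with $i$ bounded, and (i) separates case (1) from case (2) --- where moreover one checks $\varphi_{\mathfrak{C}}(n)\ge n$ for \emph{every} $n$. If $k=\infty$, a compactness argument inside $\mathfrak{C}$ (it displays $\mathcal{L}_{k'}$-type behaviour for every $k'$) produces a full symmetric copy of $\mathcal{L}_\infty$, whence $\varphi_{\mathfrak{C}}(n)\ge 2^{n-1}$ for every $n$: case (4). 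The four cases are mutually exclusive since the links of the chain have pairwise distinct growth --- $F_{n,k}\sim c_k\lambda_k^{\,n}$ with $\lambda_k$ strictly increasing to $2$ --- so even the value of $k$ is determined.

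The real work, and the step I expect to be the main obstacle, is proving the barrier lemmas, for which I would rely on a Ramsey-type \textbf{extraction/structure dichotomy} applied to the plot of a permutation: a member of $\mathfrak{C}$ that is long relative to a parameter $m$ either embeds into a fixed ``thin'' configuration --- a bounded monotone grid whose cell-incidence graph is a forest, which contributes only $O(n^{c})$ permutations of size $n$, counted by an integer combination of bounded binomials --- or it contains a size-$m$ member of one of an explicit finite list of structured families (the layered families $\mathcal{L}_k$ and their symmetric relatives, plus a few small grid classes forced by a cycle in the incidence graph), whose block sizes dictate which $F_{n,k}$ appears. Feeding an unbounded such family back through heredity yields the lower bounds; confining $\mathfrak{C}$ to a thin configuration plus a bounded number of ``insertion slots'' into which a layered spine of blocks $\le k$ is threaded yields the matching $n^{c}F_{n,k}$ upper bounds. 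The delicate point is to make the parameters of this dichotomy sharp --- in particular, that missing $F_{n,k}$ \emph{once} (at a large $n$) already rules out the $k$-th block family \emph{everywhere}; it amounts to a bespoke, Fibonacci-scale substitute for the Marcus--Tard\"os bound, which postdates this result.
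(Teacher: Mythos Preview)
The paper does not prove this theorem at all: it is quoted as a result of Kaiser and Klazar \cite{K-K}, stated for context in the survey of permutation-class profiles, with no proof or proof sketch given. So there is nothing in the paper to compare your argument against.

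As for your proposal on its own terms, it is a reasonable strategic outline and correctly identifies the combinatorial source of the thresholds (layered permutations and their dihedral images give the $F_{n,k}$ and $2^{n-1}$ lower bounds), but it remains a plan rather than a proof. The entire content of the theorem is packed into your ``barrier lemmas'' (i) and (ii) and the ``Ramsey-type extraction/structure dichotomy'', and you do not prove any of these; you yourself flag lemma (ii) --- that a single dip below $F_{n_0,k}$ at one large $n_0$ forces $\varphi_{\mathfrak C}(n)\le n^c F_{n,k-1}$ globally --- as the main obstacle, and indeed this is where essentially all of Kaiser and Klazar's work lies. Their argument proceeds through a careful structural analysis of classes with small profile (characterising when $\varphi_{\mathfrak C}$ is polynomial via decomposition into a bounded number of monotone ``runs'', and then handling the intermediate Fibonacci regimes), not via a single Ramsey dichotomy as you suggest. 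Your sketch also glosses over why the polynomial case yields an \emph{exact} integer combination of binomial coefficients for large $n$, rather than merely a polynomial bound. If you want to turn this into a proof, you will need to supply the structural classification that underlies the barrier lemmas; as written, the proposal names the right objects but does not establish the implications between them.
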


In the cases (1) to (3) the generating function is rational. \emph {Albert and Atkinson} gave in 2005  examples of hereditary classes whose generating function is algebraic \cite{A-A}. In order to state their result, we recall
first that a power series
 $F(x):=\underset{n\geq 0}{\sum }a_{n}.x^{n}$ with $a_n$ in $\mathbb{C}$ is \emph{algebraic} if there exists a nonzero polynomial $Q(x,y)$ in $\mathbb{C}[x,y]$ such that $Q(x,F(x))=0$. The series $F$ is \emph{rational} if
$Q$ has degree 1 in $y$, that is, $F(x)=R(x)/S(x)$ for two polynomials in $\mathbb{C}[x]$ where $S\neq0$. Recall next that a permutation $\pi=a_1a_2\ldots a_n$ of $[n]$ is \emph{simple} if no proper interval of
$[n]$ ($\neq [n], \varnothing$ or $\{x \}$) is transformed into an interval. In other words, $\{a_i,a_{i+1},\ldots,a_j \}$ is not an interval in $[n]$ for every $1\leq i<j \leq n$, and either $i\not =1$ or $j\not =n$.  If $n\leq 2$ all permutations are simple and called trivial. Albert and Atkinson's theorem is the following:

\begin{theorem}
 If $\mathfrak{C}$ is an hereditary class of permutations containing only finitely many simple permutations, then the generating series of $\mathfrak{C}$, namely
$\sum_{n\geqq 0}\varphi _{\mathfrak{C}}(n)x^{n}$ is algebraic.
\end{theorem}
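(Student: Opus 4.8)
The plan is to enumerate $\mathfrak{C}$ through the \emph{substitution (modular) decomposition} of permutations, the permutation analogue of the decomposition into indecomposable structures featured in Section~\ref{sect:indecomposable}. For $\sigma\in\mathfrak S_k$ and nonempty permutations $\alpha_1,\dots,\alpha_k$, write $\sigma[\alpha_1,\dots,\alpha_k]$ for the \emph{inflation} obtained by replacing the $i$-th point of $\sigma$ by an interval carrying a copy of $\alpha_i$. The first thing I would record is the classical structural fact (the permutation counterpart of Gallai's modular decomposition of graphs): every $\pi$ with $|\pi|\ge 2$ is \emph{uniquely} of the form $\sigma[\alpha_1,\dots,\alpha_k]$ with $\sigma$ simple and $k\ge 2$, provided that in the two degenerate cases $\sigma=12$ and $\sigma=21$ one takes the finest such decomposition (so each $\alpha_i$ is, respectively, sum-indecomposable or skew-indecomposable). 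Two elementary consequences are then used: the quotient $\sigma$ is a pattern of $\pi$ (keep one point per block), so $\sigma\in\mathfrak C$ whenever $\pi\in\mathfrak C$; and for $|\sigma|\ge 4$ the decomposition does not collapse under further inflation, so for fixed such $\sigma$ the map $(\alpha_1,\dots,\alpha_k)\mapsto\sigma[\alpha_1,\dots,\alpha_k]$ is injective and distinct $\sigma$'s have disjoint images.

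Write $\mathcal H:=\mathcal H_{\mathfrak C}$ and let $\mathrm{Si}$ be the set of simple permutations in $\mathfrak C$, which is finite by hypothesis. Sorting the members of $\mathfrak C$ according to the quotient of their decomposition gives
\[
\mathcal H(x)=1+x+\sum_{\sigma\in\mathrm{Si},\,|\sigma|\ge 2}g_\sigma(x),
\]
where $g_\sigma$ is the generating function of the members of $\mathfrak C$ whose decomposition has quotient $\sigma$. For $|\sigma|=k\ge 4$ these are exactly the inflations $\sigma[\alpha_1,\dots,\alpha_k]$ that happen to lie in $\mathfrak C$; for $\sigma\in\{12,21\}$ they are the sequences $\alpha_1\oplus\cdots\oplus\alpha_m$ (resp. with $\ominus$), $m\ge 2$, whose parts are sum-indecomposable (resp. skew-indecomposable) and make the whole belong to $\mathfrak C$. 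So everything reduces to understanding, for each simple $\sigma\in\mathfrak C$, the \emph{admissible set} $A_\sigma=\{(\alpha_1,\dots,\alpha_k):\sigma[\alpha_1,\dots,\alpha_k]\in\mathfrak C\}$, and the analogous admissible sets of sums.

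This is the crux, and the step I expect to be the main obstacle. Since inflation is monotone and $\mathfrak C$ is hereditary, $A_\sigma$ is a down-set of the product order on $\mathfrak C^{\,k}$, so its complement is an up-set. Now a permutation class with only finitely many simple permutations is well quasi-ordered — a fact I would establish beforehand by applying Higman's lemma to the decomposition trees (cf. Section~\ref{sect:well-quasi-order}) — and a finite product of well-quasi-ordered sets is well quasi-ordered; hence the complement of $A_\sigma$ is \emph{finitely generated}. Therefore there are finitely many tuples $(\beta^{(j)}_1,\dots,\beta^{(j)}_k)$ such that $\sigma[\alpha_1,\dots,\alpha_k]\in\mathfrak C$ iff for every $j$ some coordinate escapes, $\beta^{(j)}_i\not\le\alpha_i$. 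Expanding this by inclusion–exclusion, $g_\sigma$ becomes a finite signed sum of products of generating functions of sub-classes of the form $\mathfrak C\cap Forb(\Gamma)$ with $\Gamma$ finite; a similar down-set analysis handles the $12,21$ cases, where the admissible sequences of sum- (resp. skew-) indecomposable parts form a regular language over finitely many ``types'' of part, making $g_{12},g_{21}$ rational in the corresponding generating functions. Each sub-class produced this way is again hereditary with only finitely many simple permutations, and one checks that the recursion manufactures only finitely many distinct auxiliary classes — here one also uses, or proves in tandem, that a class with finitely many simples is \emph{finitely based} — so the bookkeeping closes into a \emph{finite} system of polynomial equations over $\ZZ[x]$ having $\mathcal H$ among its unknowns.

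Finally, since the combinatorial meaning of the unknowns supplies a formal power series solution, and the recursion is well founded in the size of the permutations, eliminating the auxiliary series (resultants over $\CC(x)$; equivalently, the algebraic power series form a field stable under the rational operations performed above) shows that $\mathcal H$ is algebraic. An equivalent and perhaps more transparent packaging: attach to each permutation its decomposition tree, whose node labels lie in the \emph{finite} alphabet $\mathrm{Si}\cup\{\oplus,\ominus\}$; the analysis above says precisely that membership in $\mathfrak C$ is decided by a bottom-up finite tree automaton whose states are the finitely many auxiliary classes, so the decomposition trees of members of $\mathfrak C$ form a regular (``recognizable'') tree language, and such languages have algebraic generating functions. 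The genuinely degenerate cases, where $\mathfrak C$ is finite, are settled at once and may be set aside at the outset.
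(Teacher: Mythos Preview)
Your approach is essentially the Albert--Atkinson argument that the paper adopts and generalizes in Theorem~\ref{theo:algebraic}: both rest on the substitution decomposition (Theorem~\ref{theo:decomposition} here), the well-quasi-ordering of such classes (Corollary~\ref{cor:wqo}), and a recursion expressing $\mathcal H_{\mathfrak C}$ polynomially in the generating series of proper hereditary subclasses obtained by forbidding pieces of the bounds (your inclusion--exclusion on the finitely generated complement of $A_\sigma$ is exactly the content of Lemma~\ref{lem:egality} and Lemma~\ref{lem:intersection}). The only organizational differences are that the paper runs a minimal-counterexample induction over hereditary subclasses rather than building an explicit finite system, and handles the two-element quotients $12,21$ via the direct computation of Lemma~\ref{lem:twoelements} and Claim~\ref{claim:claim2} rather than your regular-language packaging; the substance is the same.
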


As an illustration of this result, let us mention that the class of permutations not above $2413$ and $3142$ contain no non trivial simple permutation
(these permutations are called \emph{separable permutations}).
The generating series of this class is
$\dfrac{1-x-\sqrt{1-6x+x^2}}{2}$ (see \cite{A-A-V}).

The simple permutations of small degree are
$1, 12, 21, 2413, 3142.$ Let  $S_n$ be the number of simple permutations of $[n]$. The values of $S_n$ for $n=1$ to $7$ are: $1,2,0,2,6,46,338$ \cite{Sloane}.
Asymptotically, $S_n$ goes to $\dfrac{n!}{e^2}$, a result obtained independently in
\cite{nosaki, A-A-K}.

\subsection{Permutations and bichains}\label{permutation-bichain}
Let $\sigma$ be a permutation of $[n]$. To $\sigma$ we associate the \emph{bichain} $C_{\sigma}:= ([n], \leq, \leq_{\sigma})$ where $\leq$ is the natural order on $[n]$
and $\leq_{\sigma}$ the linear order defined by
$i\leq_{\sigma} j$
if and only if  $\sigma(i) \leq \sigma(j)$.


For example, let $\sigma$ be the permutation of $\underline {10}$ given by  the sequence of its values: $2  4 6 8  (10)13579$. The sequence of elements of  $\underline {10}$ ordered  according to  $\leq_{\sigma}$ is: $6<_{\sigma}1<_{\sigma}7<_{\sigma}2<_{\sigma}8<_{\sigma}3<_{\sigma}9<_{\sigma}4<_{\sigma}10<_{\sigma}5$. Hence, this is the sequence of values of $\sigma^{-1}$, the inverse of $\sigma$.
Let us  represent $\sigma$ by its graph  in the product $\underline n\times \underline n$, that is the set $G(\sigma):=\{(i,\sigma(i)): i\in \underline n\}$  and  order this set componentwise, that is set $(i,\sigma(i))\leq (j, \sigma(j))$ if $i\leq j$ and $\sigma(i)\leq \sigma (j)$. Since $\sigma$ is bijective, the poset $G(f)$ is the intersection of two linear orders,  given respectively by the natural order on the first and on the second coordinate. If we identify each $i$ to $(i, \sigma (i))$, the order induced on $\underline n$ is the intersection of $\leq$ and $\leq_{\sigma}$. See Figure \ref{critique}.

\begin{figure}[h]
\begin{center}
\includegraphics[width=3in]{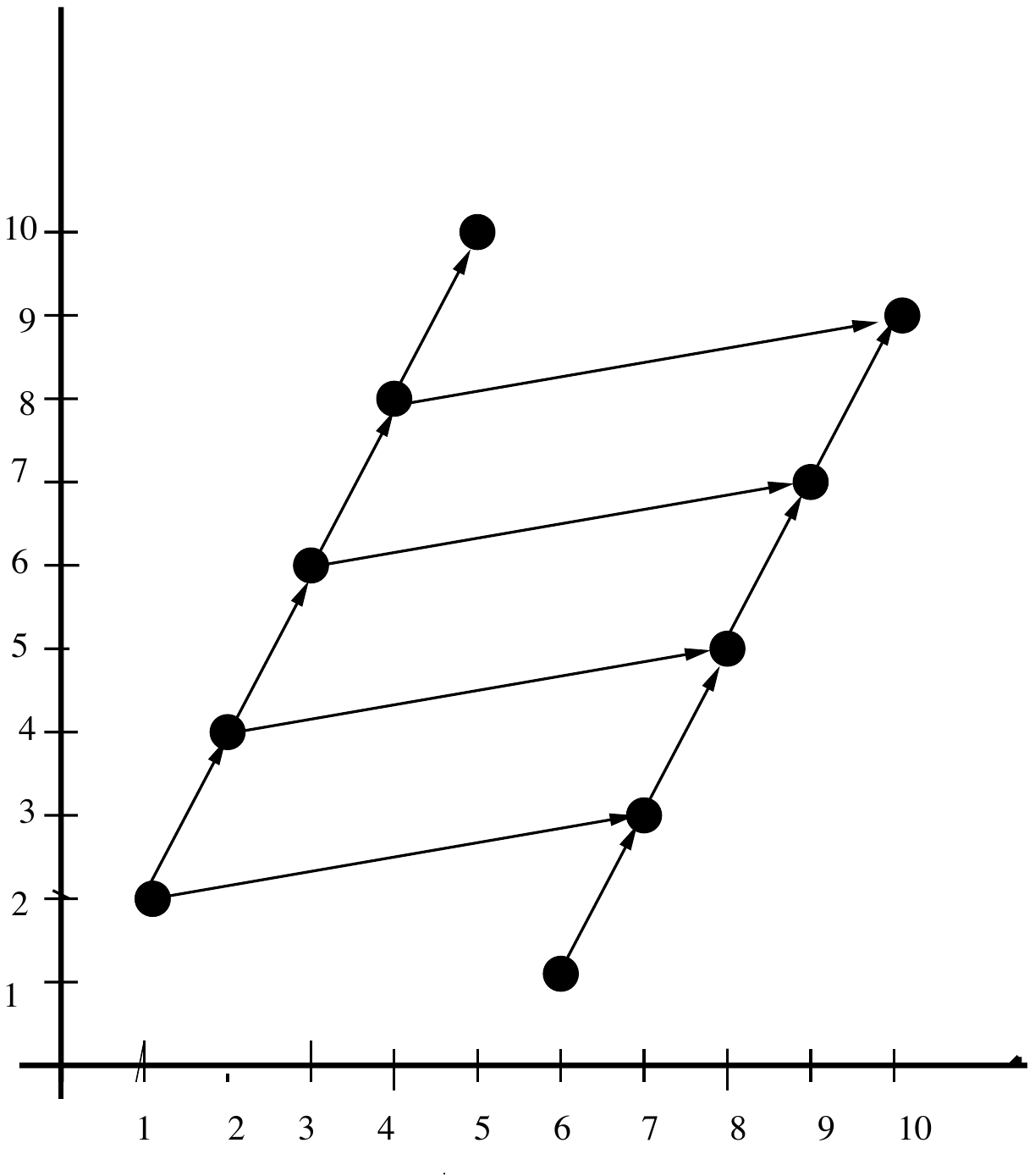}
\end{center}
\caption{Representation of a permutation of ten elements.}
\label{critique}
\end{figure}

\begin{lemma}\begin{enumerate}
\item If $B:=(E, L_1,L_2)$ is a finite bichain then $B$ is isomorphic to a bichain $C_{\sigma}$ for a unique permutation $\sigma$ on $[\vert E\vert~].$
\item If $\sigma$ and $\pi$ are two permutations then $\sigma \leq \pi$ if and only if $C_{\sigma}\leq C_{\pi}.$
\end{enumerate}
\end{lemma}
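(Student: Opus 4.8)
The plan is to handle the two statements in turn, reducing both to the rigidity of finite chains (the only order-automorphism of a finite linear order is the identity).

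For (1), the idea is that each of the two linear orders on $E$ provides a canonical relabelling of $E$ by $[n]$, where $n:=|E|$. Since $(E,L_1)$ is a finite chain, there is a unique order isomorphism $f_1\colon (E,L_1)\to ([n],\leq)$, and likewise a unique order isomorphism $f_2\colon (E,L_2)\to ([n],\leq)$. Set $\sigma:=f_2\circ f_1^{-1}\in\mathfrak S_n$. I claim that $f_1$ is an isomorphism from $B$ onto $C_{\sigma}$. By construction $f_1$ carries $L_1$ to the natural order $\leq$ on $[n]$; and for $x,y\in E$ one has $x\, L_2\, y$ iff $f_2(x)\leq f_2(y)$ iff $\sigma(f_1(x))\leq\sigma(f_1(y))$ iff $f_1(x)\leq_{\sigma} f_1(y)$, which is exactly the defining relation of $C_{\sigma}$. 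For uniqueness, suppose $C_{\sigma}$ and $C_{\pi}$ are isomorphic bichains on $[n]$; any isomorphism between them is in particular an automorphism of $([n],\leq)$, hence the identity of $[n]$ by rigidity of finite chains, so $\leq_{\sigma}\,=\,\leq_{\pi}$; since the linear order $\leq_{\tau}$ determines $\tau$ (namely $\tau$ is the unique order isomorphism from $([n],\leq_{\tau})$ to $([n],\leq)$), this forces $\sigma=\pi$.

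For (2), the key observation is that an embedding of one bichain into another is forced, on its first coordinate, to be strictly increasing. Concretely, suppose $g\colon C_{\sigma}\to C_{\pi}$ is an embedding, where $\sigma\in\mathfrak S_k$ and $\pi\in\mathfrak S_n$. Then $g$ is strictly increasing for the natural orders, so its image is $\{x_1<\dots<x_k\}$ with $g(i)=x_i$; and the condition that $g$ preserves and reflects the second order reads $\sigma(i)\leq\sigma(j)\iff\pi(x_i)\leq\pi(x_j)$ for all $i,j\in[k]$. Since $\sigma$ and $\pi$ are injective, this is the same as $\sigma(i)<\sigma(j)\iff\pi(x_i)<\pi(x_j)$, which is exactly the condition that $\sigma\leq\pi$ witnessed by $x_1<\dots<x_k$. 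Conversely, given such a witness, defining $g(i):=x_i$ yields an embedding of $C_{\sigma}$ into $C_{\pi}$ by running the same computation backwards. Hence $C_{\sigma}\leq C_{\pi}$ iff $\sigma\leq\pi$.

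Neither step presents a genuine obstacle; the only points requiring care are the appeal to rigidity of finite linear orders in the uniqueness claim of (1), and, in (2), the remark that passing between $\leq$ and $<$ is harmless because permutations are injective, so that the order-isomorphism condition on subsequences and the bichain-embedding condition literally coincide.
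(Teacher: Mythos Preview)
Your proof is correct. The paper itself does not supply a proof of this lemma; it is stated without argument and attributed as an observation of Cameron, so there is no ``paper's proof'' to compare against. Your argument via the unique order isomorphisms $f_1,f_2$ and the rigidity of finite chains is the standard and expected one, and both parts are handled cleanly.
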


The correspondence between permutations and bichains was noted by Cameron \cite{cameron} (who rather associated to  $\sigma$ the pair $(\leq, \leq_{\sigma^{-1}}))$. It allows to study classes of permutations by means of the theory of relations.
In particular, via this correspondence, hereditary classes of permutations correspond to hereditary classes of bichains and, as we will see below,  simple permutations correspond to indecomposable bichains.

\section{Indecomposability and lexicographic sum}\label{sect:indecomposable}
Let  $\mathcal{R}:=(E,(\rho_i)_{i\in I})$ be a binary structure. A subset $A$ of $E$ is an  \textit{interval} of $\mathcal{R}$ if for each $i\in I$:
 $$(x\rho_i a \Leftrightarrow x\rho_i a')  \; \text{and} \; (a\rho_i x \Leftrightarrow a'\rho_i x) \; \text{for all} \; a,a'\in A \;\text{and}\;
x\notin A.$$
The empty set, the singletons and the whole set $E$ are intervals and said \textit{trivial}.
If $\mathcal{R}$ has no non trivial interval it is \textit{indecomposable}.
 For example, if $\mathcal{R}:=(E,\leq)$ is a chain, its intervals are the ordinary intervals. If $\mathcal{R}:=(E,\leq,\leq')$ is a bichain then
$A$ is an interval of $\mathcal{R}$ if and only if $A$
is an interval of $(E,\leq)$ and $(E,\leq')$.
Hence:

\begin{fact}
 A permutation $\sigma$ is simple if and only if the bichain $C_{\sigma}$ is indecomposable.
\end{fact}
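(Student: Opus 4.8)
The plan is to chase definitions, using the description of intervals of a bichain recorded just above: a subset $A\subseteq [n]$ is an interval of $C_{\sigma}=([n],\leq,\leq_{\sigma})$ if and only if it is at once an interval of the chain $([n],\leq)$ and of the chain $([n],\leq_{\sigma})$. So I would first identify the intervals of each of these two chains separately, then intersect the two descriptions.

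For $([n],\leq)$ the intervals are the ordinary ones, namely the blocks of consecutive integers $\{i,i+1,\dots,j\}$ with $1\leq i\leq j\leq n$, together with $\varnothing$. For $([n],\leq_{\sigma})$, recall that $\leq_{\sigma}$ is obtained by transporting the natural order of $[n]$ along the bijection $\sigma$, since $i\leq_{\sigma}j$ means $\sigma(i)\leq\sigma(j)$. A subset $A$ is an interval, that is a convex subset, of $([n],\leq_{\sigma})$ exactly when, for every integer $v$ lying between $\min\{\sigma(a):a\in A\}$ and $\max\{\sigma(a):a\in A\}$, one has $v=\sigma(c)$ for some $c\in A$; because $\sigma$ is a bijection of $[n]$, this says precisely that $\sigma(A):=\{\sigma(a):a\in A\}$ is a block of consecutive integers of $[n]$. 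This last translation is the only point requiring a moment's care, and it is where the bijectivity of $\sigma$ (no integer of $[n]$ being skipped) is used; I do not expect it to be a real obstacle.

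Putting the two together, $A$ is an interval of $C_{\sigma}$ if and only if $A=\{i,i+1,\dots,j\}$ for some $1\leq i\leq j\leq n$ and $\{\sigma(i),\sigma(i+1),\dots,\sigma(j)\}$ is a block of consecutive integers, that is, exactly when the window $\{i,\dots,j\}$ of positions is carried by $\sigma$ onto an interval of $[n]$. The trivial intervals $\varnothing$, the singletons and $[n]$ correspond respectively to the cases $i>j$, $i=j$, and $(i,j)=(1,n)$. Hence $C_{\sigma}$ has a non-trivial interval if and only if there are $1\leq i<j\leq n$ with $(i,j)\neq(1,n)$ such that $\{\sigma(i),\dots,\sigma(j)\}$ is an interval of $[n]$, which by the definition of simplicity recalled in Section \ref{sect:bichains} is precisely the statement that $\sigma$ is not simple. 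Therefore $C_{\sigma}$ is indecomposable if and only if $\sigma$ is simple.
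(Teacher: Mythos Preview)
Your argument is correct and follows exactly the route the paper intends: the paper simply writes ``Hence:'' after observing that the intervals of a bichain $(E,\leq,\leq')$ are precisely the common intervals of the two chains, and your proof just unpacks this observation, identifying the intervals of $([n],\leq_{\sigma})$ with the preimages under $\sigma$ of ordinary intervals and matching the outcome with the definition of a simple permutation. There is nothing to add; your write-up is a faithful expansion of the paper's one-word justification.
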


The notion of indecomposability is rather old. The notion of interval goes back to Fra\"{\i}ss\'e \cite{fraisse2}, see also \cite{fraisse3}. A fundamental
decomposition result of a binary structures into intervals was
obtained by Gallai \cite{gallai}
(see \cite{ehren}
for further extensions). Hence, it is not surprising that several results on simple permutations were already known (for example their asymptotic
evaluation). Albert and Atkinson result recasted in terms of relational structures asserts that if $\mathfrak C$ \emph{is an hereditary class of finite bichains containing only
finitely many indecomposable bichains then the generating
 series of $\mathfrak C$ is algebraic}. The paper \cite{laflamme} contains several examples of infinite bichains $\mathcal B$ whose infinitely many members of  $Age (\mathcal B)$ are indecomposable.  \\
We will establish an extension to ordered binary structures in Theorem \ref{theo:algebraic}.\medskip

\noindent In the sequel, we recall the facts we need on lexicographic sums and the links with the indecomposability notion. Some are old (the notion of lexicographic sum goes back to Cantor).
\medskip

\noindent Let $\mathcal{R}:=(E,(\rho_i)_{i\in I})$ be a binary structure and $\mathfrak{F}:=(\mathcal{S}_x)_{x\in E}$ be a family of binary structures
$\mathcal{S}_x:=(E_x,({\rho_i}^x)_{i\in I})$, indexed by the
elements of $E$.  We  suppose that $E$ and the $E_x$ are non-empty.
The \emph{lexicographic sum} of $\mathfrak{F}$ over $\mathcal{R}$, denoted by $\underset{x\in \mathcal{R}}{\oplus }\mathcal{S}_x$, is the binary structure
$\mathcal{T}$ obtained by replacing each element
 $x\in E$ by the structure $\mathcal{S}_x$. More precisely, $\mathcal{T}=(Z,(\tau_i)_{i\in I})$ where $Z:=\{(x,y):x\in E,y\in E_x\}$ and for each $i\in I$,
$(x,y)\tau_i (x',y')$  if either $x\neq x'$ and $x\rho_i x'$ or $x=x'$ and $y{\rho_i}^x y'$.
\medskip

\noindent Trivially, if we replace each $\mathcal{S}_x$ by an isomorphic binary structure $\mathcal{S'}_x$, then $\underset{x\in \mathcal{R}}{\oplus }\mathcal{S'}_x$ is isomorphic to
 $\underset{x\in \mathcal{R}}{\oplus }\mathcal{S}_x.$
 Hence, we may suppose that the domains of the $\mathcal{S}_x$'s are pairwise disjoint. In this case we may slightly modify the definition above, setting
$Z:=\underset{x\in E}{\cup}{E_x}$ and for two elements
 $z\in E_x$ and $z'\in E_{x'}$,
$$z{\tau_i} z' \; \text{if either}\; x\neq x' \;\text{and}\; x\rho_i x' \; \text{or}\; x=x'\; \text{and}\; z{\rho_i}^x z'.$$

\noindent With this definition, each set $E_x$ is an interval of the sum $\mathcal T.$ Furthermore, let $Z/_{\equiv}$ be the quotient of $Z$ made of blocks of this partition
into intervals, let $p:Z\rightarrow Z/_{\equiv}$
be the natural projection and let $\mathcal R'$ be the image of $\mathcal T$ (that is $\mathcal R'=(Z/_{\equiv},(\rho'_i)_{i\in I})$ where
$\rho'_i=\{(p(x_1),p(x_2)):(x_1,x_2)\in \rho_i \}$. If we identify each block $E_x$ to the element $x,$ then $\mathcal R$ and $\mathcal R'$ coincide on pairs of distinct elements. They coincide if we
consider only reflexive relations.
Conversely, if  $\mathcal{T}:=(Z,(\tau_i)_{i\in I})$ is a binary structure and $(E_x)_{x\in E}$ is a partition of $Z$ into non empty intervals of
$\mathcal T,$ then $\mathcal T$ is the lexicographic sum of
$(\mathcal T \restriction_{E_x})_{x\in E}$ over the quotient $Z/_{\equiv}.$
In simpler words:

\begin{fact}
 The decompositions of a binary structure into lexicographic sums are in correspondence with the partitions of its domain into intervals.
\end{fact}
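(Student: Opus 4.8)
The plan is to make precise, and then verify, the two constructions already sketched in the paragraph preceding the statement, and to check that they are inverse to one another. Throughout, fix a binary structure $\mathcal{T}=(Z,(\tau_i)_{i\in I})$.

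First I would treat the passage from interval partitions to lexicographic decompositions. Let $(E_x)_{x\in E}$ be a partition of $Z$ into non-empty intervals of $\mathcal{T}$, let $Z/_{\equiv}$ be the associated quotient with projection $p\colon Z\to Z/_{\equiv}$, and equip $Z/_{\equiv}$ with $\rho'_i:=\{(p(z),p(z')):(z,z')\in\tau_i\}$. The crucial observation is that, because each $E_x$ is an interval, for $z\in E_x$, $z'\in E_{x'}$ with $x\neq x'$ one has $z\,\tau_i\,z'$ if and only if $p(z)\,\rho'_i\,p(z')$; this is exactly the interval condition read off from the definition of $\rho'_i$. Hence the value of $\tau_i$ on a pair of elements is governed by $\rho'_i$ when they lie in distinct blocks, and by the restriction $\mathcal{T}\restriction_{E_x}$ when they lie in the common block $E_x$; comparing with the definition of the lexicographic sum, this says precisely that $\mathcal{T}=\bigoplus_{u\in Z/_{\equiv}}\mathcal{T}\restriction_{E_u}$, after identifying each $u\in Z/_{\equiv}$ with the block it names.

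Next I would treat the reverse passage. Suppose $\mathcal{T}=\bigoplus_{x\in\mathcal{R}}\mathcal{S}_x$ with $\mathcal{R}=(E,(\rho_i)_{i\in I})$ and $\mathcal{S}_x=(E_x,(\rho_i^x)_{i\in I})$, where, as we may assume, the $E_x$ are pairwise disjoint and $Z=\bigcup_{x\in E}E_x$. As already noted after the definition of the lexicographic sum, each $E_x$ is an interval of $\mathcal{T}$: for $a,a'\in E_x$ and $z\in E_{x'}$ with $x'\neq x$, each of $z\,\tau_i\,a$, $z\,\tau_i\,a'$, $a\,\tau_i\,z$, $a'\,\tau_i\,z$ reduces to $x'\,\rho_i\,x$ (resp. $x\,\rho_i\,x'$), so the interval condition holds. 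Thus $(E_x)_{x\in E}$ is a partition of $Z$ into non-empty intervals. Finally I would check that the two maps are mutually inverse up to the natural identifications: going from a decomposition to its interval partition and back, the bijection $x\mapsto E_x$ between $E$ and $Z/_{\equiv}$ carries $\mathcal{S}_x$ onto $\mathcal{T}\restriction_{E_x}$ (since $z\,\rho_i^x\,z'$ is by definition $z\,\tau_i\,z'$ for $z,z'\in E_x$) and $\mathcal{R}$ onto the quotient $(Z/_{\equiv},(\rho'_i)_{i\in I})$; conversely, forming the lexicographic sum attached to an interval partition and reading off its blocks returns the same partition.

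The main obstacle is bookkeeping rather than mathematics. One must fix once and for all what ``a decomposition of $\mathcal{T}$ into a lexicographic sum'' means --- a family $(\mathcal{S}_x)_{x\in E}$ of summands together with an indexing structure $\mathcal{R}$, taken up to isomorphism of the summands and of $\mathcal{R}$ --- and, as flagged in the text, one must keep track of the discrepancy on the diagonal: $\mathcal{R}$ and the quotient of $\mathcal{T}$ by its interval partition need only agree on pairs of distinct elements, so the correspondence is exact only once one restricts to reflexive (or irreflexive) relations, or else records the diagonal of each relation separately. With those conventions in place, every verification is a routine unravelling of the two definitions.
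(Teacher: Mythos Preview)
Your proposal is correct and follows exactly the approach the paper intends: the paper does not give a separate proof of this Fact but presents it as a summary (``In simpler words'') of the paragraph immediately preceding it, which already sketches the two constructions you carry out and notes the reflexivity caveat on the diagonal. Your write-up simply fills in the routine verifications of that paragraph, including the mutual-inverse check and the bookkeeping on what a decomposition is taken up to, so there is nothing to add.
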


\noindent An important property of these decompositions is the following:

\begin{fact}
 The set of partitions of $E$ into intervals of $\mathcal R$, once ordered by refinement, is a sublattice of the set of partitions of $E$.
\end{fact}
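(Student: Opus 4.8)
The plan is to show that the family $\mathcal{P}_{\mathrm{int}}(\mathcal R)$ of partitions of $E$ into intervals of $\mathcal R$ (it is non-empty, containing for instance $\{E\}$ and the partition into singletons) is closed under the meet and the join of the lattice of all partitions of $E$. Here the meet of two partitions $P,Q$ is the partition whose blocks are the non-empty sets $A\cap B$ with $A\in P$, $B\in Q$, and the join is the partition associated with the equivalence relation generated by the union of the equivalence relations of $P$ and $Q$; equivalently, $x$ and $y$ lie in the same block of $P\vee Q$ iff there is a finite chain $x=x_0,x_1,\dots,x_n=y$ in which each consecutive pair $x_{j-1},x_j$ lies in a common block of $P$ or of $Q$.

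For the meet, the only point is that the intersection $A\cap B$ of two intervals of $\mathcal R$ is again an interval. This is immediate: given $a,a'\in A\cap B$ and $x\notin A\cap B$, one has $x\notin A$ or $x\notin B$; in the first case the interval property of $A$ yields $x\rho_i a\Leftrightarrow x\rho_i a'$ and $a\rho_i x\Leftrightarrow a'\rho_i x$ for every $i\in I$, and in the second case that of $B$ does. So every block of $P\wedge Q$ is an interval and $P\wedge Q\in\mathcal{P}_{\mathrm{int}}(\mathcal R)$.

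The join is where the work lies. The key lemma I would isolate is: if $A$ and $B$ are intervals of $\mathcal R$ with $A\cap B\neq\varnothing$, then $A\cup B$ is an interval. To prove it, fix $e\in A\cap B$, take $c,c'\in A\cup B$ and $x\notin A\cup B$. Since $x$ lies outside both $A$ and $B$ while $e$ lies inside both, applying the interval property in $A$ or in $B$ according as $c$ (resp. $c'$) lies in $A$ or in $B$ gives $x\rho_i c\Leftrightarrow x\rho_i e\Leftrightarrow x\rho_i c'$ and $c\rho_i x\Leftrightarrow e\rho_i x\Leftrightarrow c'\rho_i x$ for every $i\in I$; hence $A\cup B$ is an interval. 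An immediate induction then shows that any finite union $A_1\cup\cdots\cup A_k$ of intervals in which each $A_j$ ($2\le j\le k$) meets $A_1\cup\cdots\cup A_{j-1}$ is an interval.

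It remains to deduce that each block $C$ of $P\vee Q$ is an interval. Given $c,c'\in C$ and $x\notin C$, pick a finite chain $c=x_0,x_1,\dots,x_n=c'$ as above, with $x_{j-1},x_j$ in a common block $A_j$ of $P$ or of $Q$; then $x_j\in A_j\cap A_{j+1}$, so $A_1,\dots,A_n$ are intervals each meeting the union of the preceding ones, and $A_1\cup\cdots\cup A_n$ is an interval by the previous step. Since $c,c'\in A_1\cup\cdots\cup A_n\subseteq C$ and $x\notin C$, the interval property of $A_1\cup\cdots\cup A_n$ forces $x\rho_i c\Leftrightarrow x\rho_i c'$ and $c\rho_i x\Leftrightarrow c'\rho_i x$ for every $i\in I$. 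As $c,c',x$ were arbitrary, $C$ is an interval, so $P\vee Q\in\mathcal{P}_{\mathrm{int}}(\mathcal R)$. I expect the genuinely non-trivial part to be this join step — namely setting up the chaining argument and the finite-union induction; once the key lemma on unions of overlapping intervals is established, the rest is routine (and note the argument is local in $c,c',x$, so it needs no finiteness assumption on $E$).
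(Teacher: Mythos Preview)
Your proof is correct and complete. The paper, however, states this Fact without proof (it is presented as background, immediately followed by ``Let us illustrate''), so there is no argument to compare against. Your approach is the standard one: closure under meet via intersection of intervals, and closure under join via the key lemma that two overlapping intervals have an interval union, combined with the chaining description of blocks of $P\vee Q$. The local formulation you use for the join (verifying the interval condition for each triple $c,c',x$ rather than showing directly that the whole block is a union of intervals) is a nice touch, since it sidesteps any issue with infinite unions and makes the argument work for arbitrary $E$.
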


\noindent Let us illustrate. Let us say that a lexicographic sum $\underset{x\in \mathcal{R}}{\oplus }\mathcal{S}_x$ is \emph{trivial} if $\vert E \vert=1$ or $\vert E_x \vert=1$ for all $x\in E$, otherwise it is
\emph{non trivial}; also a binary structure is \emph{sum-indecomposable} if it can not be isomorphic to a non trivial lexicographic sum. We have immediately:

\begin{fact}
 A binary structure is sum-indecomposable if and only if it is indecomposable.
\end{fact}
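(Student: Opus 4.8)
The plan is to prove both implications by contraposition, using the correspondence recorded above between decompositions of a binary structure into lexicographic sums and partitions of its domain into intervals, together with the observation, also recorded above, that every block of a lexicographic sum is an interval of that sum. Write $\mathcal{R}=(E,(\rho_i)_{i\in I})$ throughout, and recall that the trivial intervals are exactly $\varnothing$, the singletons, and $E$ itself.

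First I would show that a decomposable binary structure $\mathcal{R}$ is sum-decomposable. Let $A$ be a non-trivial interval of $\mathcal{R}$, so $2\le|A|$ and $A\neq E$. Consider the partition $\pi:=\{A\}\cup\{\{x\}:x\in E\setminus A\}$ of $E$ into non-empty intervals of $\mathcal{R}$ (the singletons are trivial intervals, and $A$ is an interval by hypothesis). By the correspondence, $\mathcal{R}$ is the lexicographic sum of $(\mathcal{R}\restriction_{B})_{B\in\pi}$ over the quotient $E/\pi$. This sum is non-trivial: its index structure has $|E|-|A|+1\ge2$ elements since $A\subsetneq E$, and the block $A$ has $|A|\ge2$ elements, so neither clause in the definition of a trivial sum is met. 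Hence $\mathcal{R}$ is not sum-indecomposable.

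Conversely, I would show that a sum-decomposable structure is decomposable. Suppose $\mathcal{R}$ is isomorphic to a non-trivial lexicographic sum $\bigoplus_{x\in\mathcal{Q}}\mathcal{S}_x$, where $\mathcal{Q}$ has domain $E$, each $\mathcal{S}_x$ has domain $E_x$, and the sum has domain $Z:=\bigcup_{x\in E}E_x$. Non-triviality provides some $x_0\in E$ with $|E_{x_0}|\ge2$ and, since $|E|\ge2$, some $x_1\in E\setminus\{x_0\}$, for which $E_{x_1}\neq\varnothing$ by hypothesis. Then $E_{x_0}$ is an interval of the sum which is neither empty nor a singleton, and which is a proper subset of $Z$ because it is disjoint from the non-empty set $E_{x_1}$; hence it is a non-trivial interval. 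As isomorphisms carry intervals to intervals and preserve triviality, $\mathcal{R}$ likewise has a non-trivial interval and is therefore decomposable.

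Combining the two contrapositives gives the equivalence. The whole argument is essentially bookkeeping, since the substantive content has already been packaged into the earlier Facts; the only point needing a little care is to match, in each direction, the non-triviality of an interval with the non-triviality of the associated lexicographic sum, which is where I expect whatever minor friction there is to lie.
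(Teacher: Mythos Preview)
Your proof is correct and follows precisely the approach the paper intends: the paper presents this Fact as an immediate consequence of the preceding Facts (the correspondence between lexicographic decompositions and partitions into intervals, and the observation that each $E_x$ is an interval of the sum), and your argument is simply a careful unpacking of that correspondence in both directions.
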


\begin{proposition}\label{pro:sumlex}
Let $\mathcal{R}$ be a finite binary structure with at least two elements. Then $\mathcal{R}$ is isomorphic to a lexicographic sum $\underset{x\in \mathcal{S}}
{\oplus }\mathcal{R}_x$ where $\mathcal{S}$ is
indecomposable with at least two elements. Moreover, when $\mathcal S$ has at least three elements, the partition of $\mathcal R$ into intervals is unique.
\end{proposition}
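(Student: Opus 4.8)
The plan is to read the decomposition off the lattice $\Pi(\mathcal R)$ of all partitions of $V(\mathcal R)$ into intervals of $\mathcal R$, ordered by refinement. By the Fact recalled above this is a (finite) lattice, and by the earlier Fact its elements are exactly the decompositions of $\mathcal R$ into lexicographic sums; so producing the asserted decomposition amounts to locating the right partition in $\Pi(\mathcal R)$.

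For existence I would argue as follows. Since $\vert V(\mathcal R)\vert\geq 2$, the partition of $V(\mathcal R)$ into singletons is an element of $\Pi(\mathcal R)$ distinct from its top element $\{V(\mathcal R)\}$, so $\Pi(\mathcal R)$ being finite I may choose a coatom $\mathcal P$, i.e. a partition maximal among those different from $\{V(\mathcal R)\}$. Put $\mathcal S:=\mathcal R/\mathcal P$ and let $(B_x)_{x\in\mathcal S}$ be the blocks of $\mathcal P$, indexed by the elements of $\mathcal S$. Then $\mathcal R$ is isomorphic to $\underset{x\in\mathcal S}{\oplus}\mathcal R\restriction_{B_x}$ and $\vert\mathcal S\vert\geq 2$ because $\mathcal P\neq\{V(\mathcal R)\}$; the point to check is that $\mathcal S$ is indecomposable. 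If $\mathcal S$ had a non-trivial interval $A$, I would first show that the union $\widetilde A$ of the blocks in $A$ is an interval of $\mathcal R$: the only case needing a word is that of $a,a'$ lying in distinct blocks $B_1,B_2\in A$ while $x$ lies in a block outside $A$, where $x\rho_i a\Leftrightarrow x\rho_i a'$ follows from $A$ being an interval of $\mathcal S$ together with the fact (already noted in the text) that $\mathcal R$ and its quotient agree on pairs of distinct elements. Merging the blocks of $A$ into $\widetilde A$ would then produce an element of $\Pi(\mathcal R)$ strictly above $\mathcal P$ and still below $\{V(\mathcal R)\}$ (strictly above because $A$ has at least two blocks, below the top because $A\neq V(\mathcal S)$), contradicting the maximality of $\mathcal P$.

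For uniqueness, assuming now $\vert\mathcal S\vert\geq 3$, the plan is to prove that the blocks of $\mathcal P$ are precisely the \emph{maximal} intervals of $\mathcal R$ other than $V(\mathcal R)$; since that description refers to $\mathcal R$ only, uniqueness follows at once. Each $B_x$ being an interval $\neq V(\mathcal R)$, what has to be shown is that an arbitrary interval $I\neq V(\mathcal R)$ is contained in a single block. Suppose not; then $I$ meets at least two blocks, and the set $p(I)$ of blocks it meets is an interval of $\mathcal S$ with at least two elements (by the same computation with one element inside and one outside), hence $p(I)=V(\mathcal S)$ by indecomposability: $I$ meets every block. Since $I\neq V(\mathcal R)$, choose a block $B'$ that $I$ meets but does not contain; then $I$ and $B'$ properly overlap, so $I\setminus B'$ is again an interval of $\mathcal R$ — here I invoke the standard closure of the family of intervals of a binary structure under proper difference (established by the same kind of case analysis that gives closure under intersection and under union of meeting intervals). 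Now $I\setminus B'$ is disjoint from $B'$ but meets every other block, so $p(I\setminus B')=V(\mathcal S)\setminus\{B'\}$, a set with $\vert\mathcal S\vert-1\geq 2$ elements which is a proper non-empty subset of $V(\mathcal S)$ — a non-trivial interval of $\mathcal S$, contradicting indecomposability. Hence every interval $\neq V(\mathcal R)$ lies in a block, and we are done.

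The routine ``maximal element'' step makes existence easy; the real work is the uniqueness argument, and it is precisely there that the hypothesis $\vert\mathcal S\vert\geq 3$ enters, since it is what keeps $V(\mathcal S)\setminus\{B'\}$ non-trivial. For $\vert\mathcal S\vert=2$ this set is a singleton, the contradiction collapses, and uniqueness indeed genuinely fails (for instance a chain on at least three elements is a lexicographic sum over a two-element chain in several ways). The only mild preliminary I would need is the collection of easy closure properties of intervals ($\cap$ and $\cup$ of meeting intervals, and $\setminus$ of properly overlapping ones); these are classical and carry the only real computation in the proof.
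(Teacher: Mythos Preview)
Your argument is correct. The paper itself does not give a formal proof of the proposition; existence is taken for granted, and for uniqueness the paragraph following the statement offers only a sketch via \emph{strong intervals}: one forms the partition into maximal strong intervals, notes that when its quotient is indecomposable every other non-trivial interval partition is finer, and concludes that any decomposition with $\vert\mathcal S\vert\geq 3$ must coincide with it.

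Your route is more direct and self-contained. For existence you make explicit the coatom argument in the lattice $\Pi(\mathcal R)$ that the paper leaves implicit. For uniqueness you bypass strong intervals altogether: instead of invoking that machinery you show directly that every proper interval of $\mathcal R$ lies in a single block of $\mathcal P$, using only the elementary closure of the family of intervals under the difference of properly overlapping members. This in fact yields a slightly sharper conclusion than the paper's sketch (the blocks are the maximal \emph{proper} intervals, not merely the maximal \emph{strong} ones), and it keeps the whole proof at the level of basic interval calculus. The paper's approach has the complementary virtue of introducing the strong-interval language that is reused immediately afterwards in Gallai's decomposition theorem (Theorem~\ref{theo:decomposition}), so each choice is natural in its own context.
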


If the set $\mathcal S$ in Proposition \ref{pro:sumlex} has two elements, then the decomposition is not necessary unique, a fact which leads to the notion of strong interval.
We recall that an interval $A$ of  a binary structure $\mathcal{R}$ is \emph{strong} if it is non-empty and overlaps no other interval, meaning that if $B$ is an interval such that
$A\cap B\neq \emptyset$ then either $B \subseteq A$ or
$A \subseteq B$. We say that $A$ is \emph{maximal} if it is maximal for inclusion among strong intervals distinct from the domain $E$ of $\mathcal R$. The maximal strong interval form a partition of $E$, provided that some maximal exists; in this case $E$ is  \emph{non-limit} \cite{Ke} or, equivalently,   \emph{robust} \cite {c-d}.  Evidently, this partition exists whenever $E$ is finite. The reader will easily check that when this partition exists and the quotient is indecomposable then every other non-trivial partition into intervals is finer. Hence,  in Proposition \ref{pro:sumlex} above, if $\mathcal S$  has at least three elements,  the intervals in the decomposition are strong and thus the decomposition is unique.

\noindent Let us say that $\mathcal R:=(E,(\rho_i)_{i\in I})$ is \emph{chainable} if there is a linear order, $\leq$, on $E$ such that, for each $i$,
$x\rho_i y\Leftrightarrow x'\rho_i y'$ for every $x,y, x',y'$ such that
$x\leq y\Leftrightarrow x'\leq y'$. If $\mathcal R$ is reflexive, this amounts to the fact that each $\rho_i$ is either the equality relation
$\triangle_E$, the complete relation $E\times E$ or a linear order;
moreover if $\rho_i$ and $\rho_j$ are two linear orders, they coincide or are opposite. Note that if furthermore $\mathcal R$ is ordered then the $\rho_i$' which are linearly ordered are equal or opposite to the given order.

We arrive to the fundamental  decomposition theorem of Gallai \cite{gallai} (see for example \cite{Ke}, \cite {ehren}, \cite{c-d} for extensions to  infinite structures)
\medskip

\begin{theorem}\label{theo:decomposition}
 Let $\mathcal R$ be a finite binary structure with at least two elements, then $\mathcal R$ is a lexicographic sum $\underset{x\in \mathcal{S}}{\oplus}
{\mathcal R}_x$ where $\mathcal S$ is either indecomposable
with at least three elements or a chainable binary structure with at least two elements and the $V(\mathcal R_x)$'s are strong maximal intervals of $\mathcal R$.
\end{theorem}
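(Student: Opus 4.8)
The plan is to take for $\mathcal{S}$ the canonical finest partition of $\mathcal{R}$ into intervals --- namely the partition into maximal strong intervals --- and then to show that the resulting quotient is forced to be either indecomposable with at least three elements or chainable with at least two elements.

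\emph{Producing the decomposition.} Since $E$ is finite it is robust, so the maximal strong intervals exist; I would denote them $V_1,\dots ,V_k$. Each $V_j$ is a strong interval distinct from $E$, and conversely every singleton $\{a\}$ is a strong interval distinct from $E$ (a singleton is overlapped by no interval), hence lies in some maximal strong interval; so the $V_j$'s cover $E$. Two of them that meet must, by strongness, contain one another and hence coincide, so $\{V_1,\dots ,V_k\}$ is a partition of $E$ into intervals with $k\ge 2$ (each $V_j\ne E$). By the Fact relating lexicographic decompositions to partitions into intervals, $\mathcal{R}\cong \bigoplus_{x\in \mathcal{S}}\mathcal{R}_x$ with $\mathcal{R}_x:=\mathcal{R}\restriction_{V_x}$ and $\mathcal{S}$ the quotient; by construction $V(\mathcal{R}_x)=V_x$ is a maximal strong interval of $\mathcal{R}$. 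Since intervals, indecomposability and chainability depend only on the off-diagonal parts of the $\rho_i$'s, I may assume these relations reflexive, so that in particular every two-element binary structure is chainable.

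\emph{The quotient has only trivial strong intervals.} Let $J$ be a non-trivial interval of $\mathcal{S}$ and let $A:=\bigcup_{x\in J}V_x$ be its preimage; it is an interval of $\mathcal{R}$, it is a union of at least two distinct blocks, and $A\ne E$. If $A$ were strong it would be contained in some maximal strong interval, which is impossible; hence some interval $C$ of $\mathcal{R}$ overlaps $A$. If a block $V_y$ meets $C$, then $C$ cannot overlap $V_y$ (which is strong), so $C\subseteq V_y$ or $V_y\subseteq C$; the first case would force $C\subseteq A$, contradicting the overlap, so $V_y\subseteq C$. Thus $C$ is a union of blocks, hence descends to an interval of $\mathcal{S}$ that overlaps $J$. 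So no non-trivial interval of $\mathcal{S}$ is strong.

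\emph{The dichotomy --- the main obstacle.} What then remains is the genuine content of Gallai's theorem: a finite binary structure all of whose strong intervals are trivial is either indecomposable or chainable. Granting this and applied to $\mathcal{S}$ it finishes the proof, since $|S|\ge 2$: if $\mathcal{S}$ is indecomposable with $|S|\ge 3$ we are in the first alternative, if $|S|=2$ it is chainable, and otherwise the previous step together with the dichotomy makes $\mathcal{S}$ chainable; in every case $\mathcal{R}\cong\bigoplus_{x\in\mathcal{S}}\mathcal{R}_x$ has the stated form with the $V(\mathcal{R}_x)$ maximal strong intervals. For the dichotomy itself, the idea is: given a non-trivial interval $N$ of $\mathcal{S}$, since $N$ is not strong it overlaps some interval $N'$, and then $N\cup N'$, $N\cap N'$, $N\setminus N'$, $N'\setminus N$ are again intervals; propagating, along chains of pairwise overlapping non-trivial intervals, the way each $\rho_i$ compares a given ordered pair, one obtains a linear order $\le$ on $S$ for which every $\rho_i$ depends only on whether $x\le y$, i.e. $\mathcal{S}$ is chainable. (Alternatively one cites Gallai \cite{gallai} or the treatments in \cite{ehren}, \cite{Ke}, \cite{c-d}.) The delicate point --- and the hardest part of the argument --- is precisely this overlap-propagation step; the two previous steps are just bookkeeping with the Facts and definitions already recorded.
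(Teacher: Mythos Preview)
The paper does not prove this theorem: it is stated as Gallai's decomposition theorem and attributed to \cite{gallai}, with pointers to \cite{Ke}, \cite{ehren}, \cite{c-d} for extensions, and no argument is given. So there is no in-paper proof to compare your attempt against.

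Your outline is the standard route and is sound as far as it goes. Partitioning $E$ into maximal strong intervals and passing to the quotient is exactly the intended construction; your argument that every non-trivial interval of the quotient $\mathcal{S}$ fails to be strong is correct (the key points --- that the preimage $A$ is an interval of $\mathcal{R}$, that a strong $A\neq E$ would contradict maximality of the $V_x$, and that any $C$ overlapping $A$ must be block-saturated and hence descend to an interval of $\mathcal{S}$ overlapping $J$ --- are all fine). You are also right that the remaining dichotomy ``all strong intervals trivial $\Rightarrow$ indecomposable or chainable'' is the substantive content; your sketch via iterated overlaps and the lattice Fact is the right idea, but as written it is only a sketch, and you ultimately defer to the same references the paper cites. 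In short: your proposal is correct and matches the standard proof, but since the paper treats the result as a citation, you have actually supplied more than the paper does; the only genuine gap is that the overlap-propagation step producing the linear order is asserted rather than carried out.
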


\noindent  For our purpose, we need to introduce the following notion.

\noindent Let $\tau$ be an ordered structure with two elements. An ordered structure $\mathcal S$ is said $\tau$-\emph{indecomposable} if it cannot be decomposed into a lexicographic sum indexed by $\tau.$
If $\mathfrak A$ is a class of structures, we denote by $\mathfrak A(\tau)$ the set of members of $\mathfrak A$ which are $\tau$-indecomposable.

\begin{lemma}\label{lem:decomposition}
Let $\mathcal S:=(\{0,1\},\leq,(\rho_i)_{i\in J})$,
 with $0<1$, be an ordered structure with two elements. If $\mathcal R$ is a lexicographic sum $\underset{x\in \mathcal{S}}{\oplus }\mathcal{R}_x$ and $\mathcal R_0$ is $\mathcal S$-indecomposable,
then the partition $\mathcal R_0,\mathcal R_1$ is unique.
\end{lemma}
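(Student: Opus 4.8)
The plan is to show that $\mathcal R$ has at most one decomposition as a lexicographic sum indexed by $\mathcal S$ whose first summand is $\mathcal S$-indecomposable; so ``unique'' here is to be read as: unique among decompositions over $\mathcal S$ whose bottom block is $\mathcal S$-indecomposable (a condition on a single block does not suffice — the three-element chain, say, splits over the two-element chain in two ways, only one of which has an $\mathcal S$-indecomposable bottom). Accordingly, suppose $\mathcal R=\underset{x\in \mathcal S}{\oplus}\mathcal R_x=\underset{x\in \mathcal S}{\oplus}\mathcal R'_x$ with both $\mathcal R_0$ and $\mathcal R'_0$ being $\mathcal S$-indecomposable, and put $A:=V(\mathcal R_0)$, $B:=V(\mathcal R_1)$, $A':=V(\mathcal R'_0)$, $B':=V(\mathcal R'_1)$. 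It is enough to prove $A=A'$: then $B=B'$, and since each summand of a lexicographic sum is the restriction of the sum to its block, the two decompositions coincide.

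Next I set things up. The sets $A,B,A',B'$ are intervals of $\mathcal R$ (blocks of a lexicographic decomposition), hence so is $A'\cap B$, an intersection of intervals of a binary structure being again an interval. Because $0<1$ in $\mathcal S$, the definition of the lexicographic sum puts every element of $A$ strictly below every element of $B$ in the linear order of $\mathcal R$, so $A$ is an initial segment of the chain $(V(\mathcal R),\le)$, and likewise $A'$. Two initial segments of a chain are comparable for inclusion, so — exchanging the two decompositions if needed, which is legitimate since the hypothesis is symmetric in them — we may assume $A\subseteq A'$. If $A=A'$ we are done; so assume $A\subsetneq A'$ and set $C:=A'\setminus A$, which equals $A'\cap B$ and is a nonempty interval of $\mathcal R$.

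The key step, read straight off the definition of the lexicographic sum, is the following: whenever $P,Q$ are disjoint nonempty subsets of $V(\mathcal R)$ with $P\subseteq V(\mathcal R_0)$ and $Q\subseteq V(\mathcal R_1)$, the restriction $\mathcal R\restriction_{P\cup Q}$ equals the lexicographic sum $\underset{y\in\mathcal S}{\oplus}\mathcal T_y$ with $\mathcal T_0=\mathcal R\restriction_P$ and $\mathcal T_1=\mathcal R\restriction_Q$; indeed $P$ lies below $Q$ for $\le$, and for every $i\in J$, $p\in P$, $q\in Q$ one has $p\rho_i q\Leftrightarrow 0\rho_i1$ and $q\rho_i p\Leftrightarrow 1\rho_i0$ in $\mathcal S$, which is precisely how the two blocks of such a sum relate to one another. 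Apply this with $P:=A\subseteq V(\mathcal R_0)$ and $Q:=C\subseteq V(\mathcal R_1)$: since $P\cup Q=A'$, we get that $\mathcal R'_0=\mathcal R\restriction_{A'}$ is a lexicographic sum indexed by $\mathcal S$ with both summands nonempty, contradicting the $\mathcal S$-indecomposability of $\mathcal R'_0$. Hence $A=A'$, as required. I do not expect a genuine obstacle: the argument is bookkeeping about intervals together with the definition of the lexicographic sum. The one delicate point is the reduction to comparable initial segments followed by the peeling of $A$ off $A'$, which is also the sole place where $\mathcal S$-indecomposability is used, and it is used on the bottom block of the decomposition with the larger first block.
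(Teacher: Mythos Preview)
Your proof is correct. The paper states this lemma without proof, so there is nothing to compare against line by line; your argument supplies exactly what is needed and matches how the lemma is used later (in Lemma~\ref{lem:intersection} and Lemma~\ref{lem:union}), where both competing decompositions have an $\mathcal S$-indecomposable first block.

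Your reading of ``unique'' is the right one, and your three-element-chain example shows why the weaker reading fails. The proof itself is the natural one: because $\mathcal S$ is ordered with $0<1$, the first block of any $\mathcal S$-decomposition is an initial segment of the linear order of $\mathcal R$, so the candidate first blocks $A,A'$ are nested; if the inclusion were strict, the larger one would split over $\mathcal S$ into the smaller one and the remainder, contradicting its $\mathcal S$-indecomposability. The verification that this induced splitting really is a lexicographic sum over $\mathcal S$ (and not merely over some two-element structure) is the one place where care is needed, and you handle it correctly by reading the cross-block relations off the original decomposition $\mathcal R=\mathcal R_0\underset{\mathcal S}{\oplus}\mathcal R_1$.

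One minor remark: the appeal to ``an intersection of intervals of a binary structure being again an interval'' is true but not actually needed---once you know $A$ and $A'$ are initial segments with $A\subseteq A'$, the set $C=A'\setminus A$ is automatically an interval of the chain, and the lexicographic-sum structure on $A'=A\cup C$ follows directly from your key step without invoking that $C$ is an interval of $\mathcal R$.
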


\noindent We extend the notion of lexicographic sum to collections of non-empty binary structures. Given a non-empty binary structure $\mathcal{R}$ and classes  $\mathfrak{A}_x$ of non-empty binary
structures for each $x\in \mathcal{R}$, let us denote by
 $\underset{x\in \mathcal{R}}{ \oplus}\mathfrak{A}_x$ the class of all binary structures of the form
$\underset{x\in \mathcal{R}}{\oplus }\mathcal{S}_x$ with $\mathcal{S}_x \in \mathfrak{A}_x$. If $\mathfrak{A}_x=\mathfrak A$ for every $x\in V(\mathcal R)$ we denote this class by  $\underset{ \mathcal{R}}{ \oplus}\mathfrak{A}$.  If  $\mathcal R:=(\{0,1\},\leq,(\rho_i)_{i\in J})$,
 with $0<1$, $\mathfrak{A}_0:=\mathfrak A$  and  $\mathfrak{A}_1 := \mathfrak B$, we set   $\underset{x\in \mathcal{R}}{ \oplus}\mathfrak{A}_x=\mathfrak A\underset{\mathcal R}{\oplus}\mathfrak {B}$.
Also if $\mathfrak A$ and $\mathfrak B$ are two classes of binary structures, we set
$\underset{\mathfrak{A}}{\oplus}\mathfrak{B}:=\{\underset{x\in \mathcal{R}}{\oplus }\mathcal{S}_x:~~\mathcal{R}\in \mathfrak{A},~~\mathcal{S}_x \in \mathfrak{B} \; \text{for each} \; x\in V(\mathcal{R}) \}$.

We say that a collection  $\mathfrak C$ of binary structures is \emph{sum-closed} if $\underset{\mathfrak{C}}{\oplus}{\mathfrak{C}}\subseteq {\mathfrak{C}}$.
The \emph{sum-closure} $cl(\mathfrak{C})$ of $\mathfrak{C}$ is the
smallest sum-closed set that contains $\mathfrak{C}$. If we define $\mathfrak C_0:=\mathfrak C$ and $\mathfrak C_{n+1}:=\underset{\mathfrak{C}}{\oplus}
\mathfrak C_n$, then
$cl(\mathfrak{C})=\underset{n=1}{\overset{\infty }{\cup }} \mathfrak{C}_{n}$.  If $\mathfrak{C}$ is a class of bichains, $cl(\mathfrak{C})$ is also a class of bichains and the class of corresponding permutations is said  \emph{wreath-closed} \cite{A-A}.  If $\mathfrak C$ is made of reflexive structures and contains a one element  structure, say $\bold 1$, then $cl(\mathfrak C)=\underset{cl(\mathfrak{C})}{\oplus}{cl(\mathfrak{C})}$. If in addition $\mathfrak C$ contains the empty structure then $cl(\mathfrak C)$ is hereditary.
%
%
%

\noindent We denote by $Ind(\Omega_I)$ the collection of finite indecomposable members of $\Omega_I.$  If $\mathcal{R}$ is a binary structure, we denote by $Ind(\mathcal{R})$ the collection of its finite induced substructures
which are indecomposable. For example, if $\mathcal{R}$ is a cograph or a  serie-parallel poset then the members of $Ind(\mathcal{R})$ have at most two elements (a graph (undirected) is a \emph{cograph} if no induced subgraph is isomorphic to $P_4$, the path on $4$ vertices,  and a poset is  \emph{serie-parallel} if its comparability graph is a cograph). \\

\noindent Let $\mathfrak{D}$ be a hereditary class of $Ind(\Omega_I)$. Set $\sum \mathfrak{D}:= \{\mathcal{R} \in {\Omega_I} : Ind(\mathcal{R}) \subseteq
\mathfrak{D}\}$.

\begin{theorem}\label{theo:closed}
 If all members of $\mathfrak D$ are reflexive, then $\sum \mathfrak{D}=cl(\mathfrak{D}).$
\end{theorem}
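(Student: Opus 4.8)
The plan is to establish the two inclusions $cl(\mathfrak{D})\subseteq \sum\mathfrak{D}$ and $\sum\mathfrak{D}\subseteq cl(\mathfrak{D})$ separately, each by a short induction, after recording one reflexivity remark used on both sides. First I would observe that every member of $cl(\mathfrak{D})$ is reflexive (a lexicographic sum of reflexive structures indexed by a reflexive structure is reflexive, and $\mathfrak{D}$ consists of reflexive structures, so this propagates through the $\mathfrak{D}_n$'s), and that every member of $\sum\mathfrak{D}$ is reflexive as well, since its one-element restrictions are indecomposable and hence lie in $\mathfrak{D}$. Because of this, whenever $\mathcal{R}=\underset{x\in\mathcal{S}}{\oplus}\mathcal{R}_x$ is one of the decompositions that occur below, choosing a representative in each block $V(\mathcal{R}_x)$ yields an induced substructure of $\mathcal{R}$ isomorphic to the index structure $\mathcal{S}$ (on pairs of distinct elements this is the definition of the lexicographic sum, and the diagonal matches by reflexivity); in particular $\mathcal{S}\in Age(\mathcal{R})$ and $Ind(\mathcal{S})\subseteq Ind(\mathcal{R})$.

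For $cl(\mathfrak{D})\subseteq \sum\mathfrak{D}$ I would induct on the least $n$ with $\mathcal{R}\in\mathfrak{D}_n$. If $n=0$, then $\mathcal{R}\in\mathfrak{D}$ and, $\mathfrak{D}$ being hereditary in $Ind(\Omega_I)$, every indecomposable restriction of $\mathcal{R}$ lies in $\mathfrak{D}$, i.e. $\mathcal{R}\in\sum\mathfrak{D}$. For the inductive step, write $\mathcal{R}=\underset{x\in\mathcal{S}}{\oplus}\mathcal{T}_x$ with $\mathcal{S}\in\mathfrak{D}$ and each $\mathcal{T}_x\in\mathfrak{D}_n$, and let $\mathcal{P}=\mathcal{R}\restriction_A$ be any indecomposable restriction of $\mathcal{R}$. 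The crux is that each $V(\mathcal{T}_x)$ is an interval of $\mathcal{R}$, so $A\cap V(\mathcal{T}_x)$ is an interval of $\mathcal{P}$, hence trivial by indecomposability of $\mathcal{P}$. Consequently either $A$ is contained in a single block $V(\mathcal{T}_x)$, in which case $\mathcal{P}$ is an induced substructure of $\mathcal{T}_x$ and so $\mathcal{P}\in Ind(\mathcal{T}_x)\subseteq\mathfrak{D}$ by the induction hypothesis, or $A$ meets every block in at most one point, in which case $\mathcal{P}$ is isomorphic to a restriction of $\mathcal{S}$ (again reflexivity handles the diagonal), so $\mathcal{P}\in Ind(\mathcal{S})\subseteq\mathfrak{D}$ by heredity. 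In both cases $Ind(\mathcal{R})\subseteq\mathfrak{D}$.

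For $\sum\mathfrak{D}\subseteq cl(\mathfrak{D})$ I would induct on $|V(\mathcal{R})|$. When $|V(\mathcal{R})|\le 1$ the structure $\mathcal{R}$ is itself indecomposable, so $\mathcal{R}\in Ind(\mathcal{R})\subseteq\mathfrak{D}\subseteq cl(\mathfrak{D})$. When $|V(\mathcal{R})|\ge 2$, Proposition \ref{pro:sumlex} gives $\mathcal{R}=\underset{x\in\mathcal{S}}{\oplus}\mathcal{R}_x$ with $\mathcal{S}$ indecomposable and $|V(\mathcal{S})|\ge 2$. By the remark above $\mathcal{S}$ is an indecomposable induced substructure of $\mathcal{R}$, hence $\mathcal{S}\in Ind(\mathcal{R})\subseteq\mathfrak{D}\subseteq cl(\mathfrak{D})$; and each $\mathcal{R}_x$ is an induced substructure of $\mathcal{R}$ on strictly fewer elements (there are at least two blocks) with $Ind(\mathcal{R}_x)\subseteq Ind(\mathcal{R})\subseteq\mathfrak{D}$, so $\mathcal{R}_x\in\sum\mathfrak{D}$ and, by the induction hypothesis, $\mathcal{R}_x\in cl(\mathfrak{D})$. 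Since $cl(\mathfrak{D})$ is sum-closed and contains $\mathcal{S}$ together with all the $\mathcal{R}_x$, it contains $\mathcal{R}=\underset{x\in\mathcal{S}}{\oplus}\mathcal{R}_x$. (If $\mathfrak{D}=\emptyset$, or regarding the empty structure, the statement is immediate under the standard convention that the classes considered contain, or both omit, the empty structure.)

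The step I expect to be the main obstacle is the interval bookkeeping in the inductive step of the first inclusion: showing that an indecomposable restriction of a lexicographic sum either sits inside a single block or visits each block in at most one point, and, in the latter case, identifying it up to isomorphism with a restriction of the index structure $\mathcal{S}$. This is exactly where indecomposability and the hypothesis that the members of $\mathfrak{D}$ are reflexive enter the argument; everything else is a routine induction.
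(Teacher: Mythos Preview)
Your proof is correct and follows the same two-inclusion strategy as the paper, but the paper's own argument is extremely terse (it merely asserts $cl(\mathfrak{D})\subseteq\sum\mathfrak{D}$ ``under the assumption that all members of $\mathfrak{D}$ are reflexive'' and dispatches the converse in one clause), whereas you carry out both directions by explicit inductions. In particular, your interval bookkeeping---showing that an indecomposable restriction of a lexicographic sum either lies in a single block or is, by reflexivity, isomorphic to a restriction of the index $\mathcal{S}$---is exactly the content that the paper suppresses in the first inclusion, and your use of Proposition~\ref{pro:sumlex} together with the observation that (again by reflexivity) $\mathcal{S}$ embeds back into $\mathcal{R}$ makes rigorous the implicit induction behind the paper's one-line converse. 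So the route is the same; you have simply written out the details the paper omits.
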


\begin{proof}
Inclusion $\sum \mathfrak{D}\supseteq {cl(\mathfrak{D})}$ holds under assumption that all members of $\mathfrak{D}$ are reflexive. Conversely, if $\mathcal{R}\notin cl(\mathfrak{D})$ then either $\mathcal{R}$ is
indecomposable in which case $\mathcal{R}\notin \mathfrak{D}$ or $\mathcal{R}$ can not be expressed as a lexicographic sum of structures of $\mathfrak{D}$ hence $\mathcal{R}\notin \sum \mathfrak{D}.$
\end{proof}

In the sequel, we consider only ordered structures made of reflexive binary relations. Let $\Gamma _{d}$ be the subclass of reflexive members of  $\Theta_d$.

Let $\mathfrak A$ be a subclass of $\Gamma _{d}$; for $i\in \NN$ let  $\mathfrak A_{(i)}$,
resp. $\mathfrak A_{(\geq i)}$, be the subclass made of its members   which have $i$ elements, resp. at least $i$ elements.

\begin{lemma}\label{lem:union}
Let $\mathfrak D$ be a  class made of non-empty indecomposable members of  $\Gamma _{d}$ such that $\mathfrak D_{(1)}$ is reduced to the one-element structure ${\bf 1}$. Let $\mathfrak A$ be  the sum-closure of $\mathfrak D$ and  for each $\mathcal S\in  \mathfrak D_{(2)}$, let $\mathfrak A (\mathcal S)$ be the subclass of $\mathcal S$-indecomposable members of $\mathfrak A$. Set $\mathfrak A_{\mathcal S}:= \underset{ \mathcal{S}}{ \oplus}\mathfrak{A}$ if $\mathcal S\in  \mathfrak D_{( \geq3)}$ and otherwise set $\mathfrak A_{\mathcal S}:=\mathfrak A(\mathcal S)\underset{\mathcal S}{\oplus}\mathfrak {A}$ if $\mathcal S\in  \mathfrak D_{(2)}$ and  $\mathcal S:=(\{0,1\},\leq,(\rho_i)_{i\in J})$ with $0<1$. Then:

\begin{equation} \label{eq:classe}
\mathfrak A=\{\bf 1\}\cup \underset{\mathcal S\in \mathfrak D_{(\geq 2)}}{\bigcup}\mathfrak A_{\mathcal S}\end{equation}
and
\begin{equation}\label{eq:classeindec}
\mathfrak A(\mathcal S)=\mathfrak A \setminus \mathfrak A_{\mathcal S}
\end{equation}
for every $\mathcal S \in \mathfrak D_{(2)}$.

Furthermore, all sets in  equation (\ref {eq:classe} ) are pairwise disjoint.
\end{lemma}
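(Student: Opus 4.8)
The plan rests on two facts about $\mathfrak A:=cl(\mathfrak D)$; once they are in place, (\ref{eq:classe}), (\ref{eq:classeindec}) and the disjointness follow quickly. Recall $\mathfrak D_0:=\mathfrak D$, $\mathfrak D_{n+1}:=\underset{\mathfrak D}{\oplus}\mathfrak D_n$, $\mathfrak A=\bigcup_n\mathfrak D_n$. The first fact is a \emph{canonical form}: every $\mathcal R\in\mathfrak A$ with $|V(\mathcal R)|\geq 2$ is a lexicographic sum $\underset{x\in\mathcal S}{\oplus}\mathcal R_x$ with $\mathcal S\in\mathfrak D_{(\geq 2)}$ and all $\mathcal R_x\in\mathfrak A$. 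This I prove by induction on the least $n$ with $\mathcal R\in\mathfrak D_n$ (for $n=0$, $\mathcal R\in\mathfrak D_{(\geq 2)}$ and $\mathcal R=\underset{x\in\mathcal R}{\oplus}{\bf 1}$ works): for $n\geq 1$, writing $\mathcal R=\underset{y\in\mathcal U}{\oplus}\mathcal V_y$ with $\mathcal U\in\mathfrak D$ and $\mathcal V_y\in\mathfrak D_{n-1}\subseteq\mathfrak A$, either $|\mathcal U|\geq 2$, so we are done, or $\mathcal U={\bf 1}$ (the only one-element member of $\mathfrak D$) and $\mathcal R\cong\mathcal V_{y_0}\in\mathfrak D_{n-1}$, where induction applies. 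The second fact is a \emph{closure} property: if $\mathcal R\in\mathfrak A$ and $A$ is a non-empty interval of $\mathcal R$, then $\mathcal R\restriction_A\in\mathfrak A$. (When $\mathfrak D$ is hereditary this is Theorem \ref{theo:closed}; in general I argue by induction on $|V(\mathcal R)|$: take the canonical form $\mathcal R=\underset{x\in\mathcal U}{\oplus}\mathcal V_x$ with $\mathcal U$ indecomposable, $|\mathcal U|\geq 2$. If $A\subseteq V(\mathcal V_x)$, apply induction to the smaller $\mathcal V_x$. Otherwise the image of $A$ in the quotient $\mathcal U$ is an interval of $\mathcal U$ with at least two elements, hence is all of $V(\mathcal U)$; then $A$ meets every block, $\{A\cap V(\mathcal V_x)\}_x$ is a partition of $A$ into non-empty intervals exhibiting $\mathcal R\restriction_A=\underset{x\in\mathcal U}{\oplus}(\mathcal R\restriction_{A\cap V(\mathcal V_x)})$, each $A\cap V(\mathcal V_x)$ is an interval of $\mathcal V_x$ so by induction $\mathcal R\restriction_{A\cap V(\mathcal V_x)}\in\mathfrak A$, and therefore $\mathcal R\restriction_A\in\underset{\mathfrak D}{\oplus}\mathfrak A\subseteq\mathfrak A$.)

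For (\ref{eq:classe}) and (\ref{eq:classeindec}): the inclusions $\{{\bf 1}\}\cup\bigcup_{\mathcal S\in\mathfrak D_{(\geq 2)}}\mathfrak A_{\mathcal S}\subseteq\mathfrak A$ and $\mathfrak A(\mathcal S)\cap\mathfrak A_{\mathcal S}=\emptyset$ are immediate, since $\mathfrak A$ is sum-closed and contains each $\mathcal S\in\mathfrak D$, and any member of $\mathfrak A_{\mathcal S}$ is by definition $\mathcal S$-decomposable. Conversely, let $\mathcal R\in\mathfrak A$. If $|V(\mathcal R)|=1$ then $\mathcal R={\bf 1}$. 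If $|V(\mathcal R)|\geq 2$, put $\mathcal R$ in canonical form $\underset{x\in\mathcal S}{\oplus}\mathcal R_x$. If $\mathcal S\in\mathfrak D_{(\geq 3)}$ then $\mathcal R\in\underset{\mathcal S}{\oplus}\mathfrak A=\mathfrak A_{\mathcal S}$. If $\mathcal S\in\mathfrak D_{(2)}$, then among the $2$-block interval partitions of $\mathcal R$ with quotient $\mathcal S$ (the canonical form provides one) choose one, say $\mathcal R=\mathcal R_0\oplus_{\mathcal S}\mathcal R_1$, with $|V(\mathcal R_0)|$ least. Using associativity of lexicographic sums indexed by a two-element ordered structure, minimality forces $\mathcal R_0$ to be $\mathcal S$-indecomposable (this reproves Lemma \ref{lem:decomposition}), and the closure property gives $\mathcal R_0,\mathcal R_1\in\mathfrak A$; hence $\mathcal R\in\mathfrak A(\mathcal S)\underset{\mathcal S}{\oplus}\mathfrak A=\mathfrak A_{\mathcal S}$. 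Running the same minimization on an arbitrary $\mathcal S$-decomposable $\mathcal R\in\mathfrak A$ shows $\mathfrak A\setminus\mathfrak A_{\mathcal S}\subseteq\mathfrak A(\mathcal S)$, which together with $\mathfrak A(\mathcal S)\cap\mathfrak A_{\mathcal S}=\emptyset$ yields (\ref{eq:classeindec}).

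For the disjointness, cardinality separates $\{{\bf 1}\}$ from every $\mathfrak A_{\mathcal S}$. Let $\mathcal S\neq\mathcal S'$ in $\mathfrak D_{(\geq 2)}$ and $\mathcal R\in\mathfrak A_{\mathcal S}\cap\mathfrak A_{\mathcal S'}$; I derive $\mathcal S\cong\mathcal S'$ from the uniqueness in Gallai's theorem (Theorem \ref{theo:decomposition}, Proposition \ref{pro:sumlex}). If $\mathcal S$ has at least three elements, the decomposition of $\mathcal R$ witnessing $\mathcal R\in\mathfrak A_{\mathcal S}$ is its Gallai decomposition, so $\mathcal S$ is the Gallai quotient of $\mathcal R$; moreover a structure whose Gallai quotient is indecomposable of size $\geq 3$ has no $2$-block interval partition (each block maps to a trivial interval of the indecomposable quotient, hence lies in one maximal strong interval, and then so does its complement — impossible when there are at least three). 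Thus $\mathcal S'\notin\mathfrak D_{(2)}$, and if $\mathcal S'$ has size $\geq 3$ then uniqueness gives $\mathcal S'\cong\mathcal S$. If instead $\mathcal S,\mathcal S'\in\mathfrak D_{(2)}$, then $\mathcal R$ has a $2$-block interval partition, so by the previous remark its Gallai quotient $\mathcal C$ is chainable, with maximal strong intervals $M_1<\dots<M_k$ in the distinguished order; that order descends to $\mathcal C$, so every $2$-block interval partition of $\mathcal R$ is a split $\{M_1\cup\dots\cup M_j,\ M_{j+1}\cup\dots\cup M_k\}$, and chainability of $\mathcal C$ makes the quotient of such a split the same two-element structure for every $j$; hence both $\mathcal S$ and $\mathcal S'$ equal it.

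The main obstacles are the closure property and the chainable case just above: both reduce to understanding how an arbitrary interval meets the top level of a lexicographic decomposition, and in each the decisive point is that the relevant quotient is indecomposable (respectively chainable), which pins the image of the interval down to a trivial one or to the whole quotient.
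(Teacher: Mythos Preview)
Your proof is correct and takes a more self-contained route than the paper's. For the inclusion $\mathfrak A\subseteq\{{\bf 1}\}\cup\bigcup_{\mathcal S}\mathfrak A_{\mathcal S}$, the paper applies Gallai's theorem (Theorem~\ref{theo:decomposition}) directly to $\mathcal R$ and then asserts that the pieces $\mathcal R_x$ lie in $\mathfrak A$ and the quotient $\mathcal S$ (or its two-element restriction) lies in $\mathfrak D$; this tacitly relies on $\mathfrak A=\sum\mathfrak D$ via Theorem~\ref{theo:closed}, hence on $\mathfrak D$ being hereditary (which holds in the paper's applications but is not hypothesised in the lemma). You instead extract a canonical form with $\mathcal S\in\mathfrak D$ and an interval-closure property straight from the inductive construction of $cl(\mathfrak D)$, so your argument works for arbitrary $\mathfrak D$. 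In the two-element case you recover $\mathcal S$-indecomposability of $\mathcal R_0$ by a minimisation (in effect reproving Lemma~\ref{lem:decomposition}), whereas the paper picks $\mathcal R_0$ as a maximal strong interval. For disjointness the paper only cites Proposition~\ref{pro:sumlex} and Lemma~\ref{lem:decomposition}; your explicit analysis through the Gallai quotient --- in particular the observation that in the chainable case every two-block split has the same two-element quotient --- supplies the missing details.

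One small slip: in arguing that a structure whose Gallai quotient is indecomposable of size $\geq 3$ has no two-block interval partition, you write that each block ``maps to a trivial interval of the indecomposable quotient, hence lies in one maximal strong interval''. A trivial interval of the quotient may also be the whole quotient, not just a singleton, and your parenthetical does not cover that case. The conclusion is still true: if the image of a block $A$ is the whole quotient, then $A$ meets every maximal strong interval but (being proper) cannot contain them all, so $A$ properly overlaps some maximal strong interval, contradicting strongness. Alternatively, simply invoke the remark following Proposition~\ref{pro:sumlex} that when the Gallai quotient is indecomposable every non-trivial interval partition refines the Gallai partition.
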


\begin{proof}
Let's denote by $(1)$ (respectively by $(2)$) the left-hand side (respectively the right-hand side) of Equation \ref{eq:classe}. Inclusion $(2)\subseteq (1)$ is obvious because $\mathfrak A$
is sum-closed according to Theorem \ref{theo:closed}.
To prove inclusion $(1)\subseteq (2)$, let $\mathcal R$ be in $(1)$, if $\mathcal R$ has one element then it is in $(2)$, otherwise, according to Theorem \ref{theo:decomposition}, $\mathcal R$ is
a lexicographic sum $\underset{x\in \mathcal S}\oplus \mathcal R_x$ where $\mathcal S$ is either indecomposable with at least three elements or a chainable binary structure with at least two
elements and each   $\mathcal R_x\in \mathfrak A$ is a strong interval of $\mathcal R$ for each $x\in \mathcal S$.  In the first case, $\mathcal R$ is in
$\underset{\mathcal S\in \mathfrak{D_{(\geq  3)}}} \bigcup \underset{x\in \mathcal S}\oplus \mathfrak A$, hence in $(2).$
In the second case,  $\mathcal S$ is chainable with $n$ elements, $n\geq 2$,   and we may set $\mathcal S:=(\{0,1,\cdots,n-1\},\leq,(\rho_i)_{i\in I})$ with $0<1<\cdots<n-1$.
Set $\mathcal S':=\mathcal S \restriction_{\{0,1\}}$, $\mathcal S'':=\mathcal S \restriction_{\{1,\cdots,n-1\}}$, $\mathcal R'_0:=\mathcal R_0$ and
$\mathcal R'_1:= \underset{x\in \mathcal S''}\oplus \mathcal R_x$. We have obviously  $\mathcal R= \mathcal R'_0 \underset{\mathcal S'}\oplus \mathcal R'_1$.
Since $\mathcal R_0$  is a strong interval of $\mathcal R$, $\mathcal R'_0$ is  $\mathcal S'$-indecomposable, hence $\mathcal R$ belongs to
$\underset{\mathcal S'\in \mathfrak D_{(2)}}{\bigcup}(\mathfrak A(\mathcal S')\underset{\mathcal S'}{\oplus}\mathfrak A)$ which is a subset of $(2).$
The fact that these sets are pairwise disjoint follows from Proposition \ref{pro:sumlex} and Lemma \ref{lem:decomposition}.

%
 Equality \ref{eq:classeindec} is obvious: the $\mathcal S$-indecomposable members of $\mathfrak A$ are those which cannot be writen as $\mathcal S$-sums.
 \end{proof}

\bigskip


In the sequel,  we count. Our structures being ordered  we may choose a unique representative of an $n$-element structure on  the set $\{0, \dots, n-1\}$,  the ordering being the natural order.
 Let $\mathcal H$ and $\mathcal K$  be  the generating series of
$\mathfrak{A}$ and $\mathfrak{D}_{(\geq 3)}$ and let $\mathcal K(\mathcal H)$ be the series obtained by substituting the indeterminate $x$ by $\mathcal H$.  Let  $\mathcal H_{\mathfrak {A}(\mathcal S)}$ and  $\mathfrak {A}_{\mathcal S}$ be the generating series of $\mathfrak A(\mathcal S)$ and $\mathfrak A_{\mathcal S}$ for $\mathcal S\in \mathfrak{D}_{(\geq 2)}$. And let $p$ be the cardinality of $\mathfrak{D}_{(2)}$.

\begin{lemma}\label{lem:hereditary-series}
\begin{equation}\label{eq1}
 (p-1)\mathcal {H}^2+(x-1+\mathcal K(\mathcal{H}))\mathcal {H}+x+\mathcal K(\mathcal{H})=0.
 \end{equation}

  \begin{equation}\label{eq2}
 \mathcal{H}_{\mathfrak {A}(\mathcal S)}=\frac{\mathcal{H}}{1+\mathcal {H}} \;\text{for every}\;  \mathcal S\in \mathfrak{D}_{(2)}.
\end{equation}

\end{lemma}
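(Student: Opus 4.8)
The plan is to turn the disjoint decomposition of $\mathfrak A$ furnished by Lemma~\ref{lem:union} into an identity between generating series, using that lexicographic sums of ordered structures are enumerated without overcounting. First I would establish two product rules. Fix $\mathcal S\in\mathfrak D_{(\geq 3)}$ with $k:=|\mathcal S|$ elements; I claim the assignment $(\mathcal R_0,\dots,\mathcal R_{k-1})\mapsto\underset{x\in\mathcal S}{\oplus}\mathcal R_x$ is a size-additive bijection from $\mathfrak A^{k}$ onto $\mathfrak A_{\mathcal S}=\underset{\mathcal S}{\oplus}\mathfrak A$. Surjectivity is the definition of $\underset{\mathcal S}{\oplus}\mathfrak A$; size-additivity is clear from the construction of the sum (recall that our structures are ordered, so the sum carries a canonical linear order and is again represented on an initial segment of $\mathbb N$); and injectivity follows from the ``moreover'' clause of Proposition~\ref{pro:sumlex}, which gives uniqueness of the partition into intervals once the quotient has at least three elements. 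Hence the generating series of $\mathfrak A_{\mathcal S}$ is $\mathcal H^{k}$. The same reasoning, now invoking Lemma~\ref{lem:decomposition} in place of Proposition~\ref{pro:sumlex}, shows that for $\mathcal S\in\mathfrak D_{(2)}$ the map $(\mathcal R_0,\mathcal R_1)\mapsto\mathcal R_0\underset{\mathcal S}{\oplus}\mathcal R_1$ is a size-additive bijection from $\mathfrak A(\mathcal S)\times\mathfrak A$ onto $\mathfrak A_{\mathcal S}=\mathfrak A(\mathcal S)\underset{\mathcal S}{\oplus}\mathfrak A$ (the first summand being $\mathcal S$-indecomposable forces the partition into the two blocks), so the generating series of $\mathfrak A_{\mathcal S}$ is $\mathcal H_{\mathfrak A(\mathcal S)}\cdot\mathcal H$.

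Next I would read off \eqref{eq2}. By Lemma~\ref{lem:union}, $\mathfrak A_{\mathcal S}$ is one of the pairwise disjoint blocks occurring in \eqref{eq:classe}, and by \eqref{eq:classeindec} the class $\mathfrak A$ is the disjoint union of $\mathfrak A(\mathcal S)$ and $\mathfrak A_{\mathcal S}$; passing to generating series and inserting the product rule just proved gives $\mathcal H=\mathcal H_{\mathfrak A(\mathcal S)}+\mathcal H_{\mathfrak A(\mathcal S)}\mathcal H=\mathcal H_{\mathfrak A(\mathcal S)}(1+\mathcal H)$. Since $\mathfrak A$ is the sum-closure of a family of non-empty structures it contains no empty structure, so $\mathcal H$ has zero constant term and $1+\mathcal H$ is invertible in the ring of formal power series; dividing yields $\mathcal H_{\mathfrak A(\mathcal S)}=\mathcal H/(1+\mathcal H)$, which is \eqref{eq2}. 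In particular this series is the same for every $\mathcal S\in\mathfrak D_{(2)}$.

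Finally, for \eqref{eq1} I would sum generating series over the disjoint decomposition \eqref{eq:classe}. The block $\{\mathbf 1\}$ contributes $x$; the blocks $\mathfrak A_{\mathcal S}$ with $\mathcal S\in\mathfrak D_{(\geq 3)}$ contribute $\sum_{k\geq 3}|\mathfrak D_{(k)}|\,\mathcal H^{k}=\mathcal K(\mathcal H)$, a legitimate substitution because $\mathcal H$ has zero constant term and each $\mathfrak D_{(k)}$ is finite (as $\mathfrak D\subseteq\Gamma_d$ carries a bounded number of relations); and the $p$ blocks $\mathfrak A_{\mathcal S}$ with $\mathcal S\in\mathfrak D_{(2)}$ each contribute $\mathcal H_{\mathfrak A(\mathcal S)}\mathcal H=\mathcal H^{2}/(1+\mathcal H)$ by \eqref{eq2}. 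Adding these gives $\mathcal H=x+\mathcal K(\mathcal H)+p\,\mathcal H^{2}/(1+\mathcal H)$; multiplying through by $1+\mathcal H$ and collecting terms produces exactly \eqref{eq1}. The only step I expect to be a genuine obstacle is the injectivity of the two sum maps above --- that distinct tuples of summands yield non-isomorphic ordered structures --- which is precisely what Proposition~\ref{pro:sumlex}, Lemma~\ref{lem:decomposition} and the disjointness assertion of Lemma~\ref{lem:union} are there to provide; without them the series identities would only be coefficientwise inequalities.
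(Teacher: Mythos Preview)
Your proof is correct and follows essentially the same route as the paper: derive the product identities $\mathcal H_{\mathfrak A_{\mathcal S}}=\mathcal H^{|\mathcal S|}$ for $\mathcal S\in\mathfrak D_{(\geq 3)}$ and $\mathcal H_{\mathfrak A_{\mathcal S}}=\mathcal H_{\mathfrak A(\mathcal S)}\mathcal H$ for $\mathcal S\in\mathfrak D_{(2)}$, combine the latter with \eqref{eq:classeindec} to obtain \eqref{eq2}, then sum over the disjoint decomposition \eqref{eq:classe} to reach $\mathcal H=x+\mathcal K(\mathcal H)+p\,\mathcal H^{2}/(1+\mathcal H)$ and clear the denominator. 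Your write-up is, if anything, more explicit than the paper's about why the sum maps are bijections (invoking Proposition~\ref{pro:sumlex} and Lemma~\ref{lem:decomposition}) and why the substitution $\mathcal K(\mathcal H)$ is well defined.
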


\begin{proof}

\noindent Let us prove that Equation \ref{eq2} holds. Let $\mathcal S\in \mathfrak{D}_{( 2)}$. Since by  definition in Lemma \ref{lem:union},  $\mathfrak A_{\mathcal S}=\mathfrak A(\mathcal S)\underset{\mathcal S}{\oplus}\mathfrak {A}$, we have $\mathcal H_{\mathfrak A_{\mathcal S}}=\mathcal H_{\mathfrak {A}(\mathcal S)}.\mathcal H$. From Equation (\ref{eq:classeindec}),  we deduce  $\mathcal H_{\mathfrak A_{\mathcal S}}= \mathcal H-\mathcal H_{\mathfrak A_{\mathcal S}}.\mathcal H$. Since the coefficients of $\mathcal H$ are non-negative, the  series $1+ \mathcal H$ is invertible, hence $\mathcal H_{\mathfrak A_{\mathcal S}}=\frac{\mathcal H}{1+\mathcal H}$ as claimed in Equation (\ref {eq2}).

 \noindent Let us prove that Equation \ref{eq1} holds.
Let $\mathcal S\in \mathfrak{D}_{(n)}$, with $n\geq 3$. Since by  definition in Lemma \ref{lem:union},  $\mathfrak A_{\mathcal S}= \underset{ \mathcal{S}}{ \oplus}\mathfrak{A}$,  we have  $\mathcal H_{\mathfrak A_{\mathcal S}}=\mathcal H^n$. From this, we deduce that the generating series of $\underset{\mathcal S\in \mathfrak D_{(\geq 3)}}{\bigcup}\mathfrak A_{\mathcal S}$ is equal to $\mathcal K(\mathcal{H})$.
From Equation (\ref {eq2}), we deduce that the generating series of $\mathcal H_{\mathfrak A_{\mathcal S}}$ is $\frac{\mathcal{H}^2}{1+\mathcal {H}}$. Hence the generating series of  $\underset{\mathcal S\in \mathfrak D_{( 2)}}{\bigcup}\mathfrak A_{\mathcal S}$ is equal to $p\frac{\mathcal{H}^2}{1+\mathcal {H}}$.

%
%

Substituting these values  in Equation (\ref{eq:classe}), we obtain
\begin{equation}\label{eq:4}\mathcal H=x+p\frac{\mathcal H^2}{1+\mathcal H}+\mathcal K(\mathcal{H}).
\end{equation}

A straightforward computation yields Equation (\ref{eq1}).

%
%
%
%
\end{proof}

Let us say that a class of finite structures is \emph{algebraic} if its generating series is algebraic.

\begin{corollary}\label{cor:wqoalgebraic}
Let $\mathfrak D$ be a  class made of non-empty indecomposable members of  $\Gamma _{d}$ such that $\mathfrak D_{(1)}$ is reduced to the one-element structure ${\bf 1}$. If  $\mathfrak D$ is algebraic then its  sum-closure and the subclass $\mathfrak A_{\mathcal S}$ consisting of the  $\mathcal S$-indecomposable members of the  sum-closure $\mathfrak A$ are algebraic for each $\mathcal S\in \mathfrak D_{(2)}$,  .
\end{corollary}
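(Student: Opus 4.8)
The plan is to turn the functional equation of Lemma~\ref{lem:hereditary-series} into an algebraic equation for the generating series $\mathcal H$ of the sum-closure $\mathfrak A=cl(\mathfrak D)$ itself, and then read off the other two classes for free. First note that, since $\mathfrak D_{(1)}=\{{\bf 1}\}$ and $|\mathfrak D_{(2)}|=p$, the generating series of $\mathfrak D$ is $x+px^{2}+\mathcal K$, where $\mathcal K$ is the series of $\mathfrak D_{(\geq 3)}$; so ``$\mathfrak D$ algebraic'' is the same as ``$\mathcal K$ algebraic''. Next, once $\mathcal H$ is known to be algebraic, everything else follows from the closure properties of algebraic power series (they form a ring, and $1+\mathcal H$, having constant term $1$, is a unit with algebraic inverse): by (\ref{eq2}) the series of the class $\mathfrak A(\mathcal S)$ of $\mathcal S$-indecomposable members of $\mathfrak A$ is $\mathcal H/(1+\mathcal H)$, and that of $\mathfrak A_{\mathcal S}=\mathfrak A(\mathcal S)\underset{\mathcal S}{\oplus}\mathfrak A$ is $\mathcal H^{2}/(1+\mathcal H)$ (see Lemma~\ref{lem:union} and the proof of Lemma~\ref{lem:hereditary-series}), for every $\mathcal S\in\mathfrak D_{(2)}$. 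Hence the whole matter reduces to proving that $\mathcal H$ is algebraic over $\CC(x)$.

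To do that, I would exploit that $\mathcal H$ has zero constant term (the least member of $\mathfrak A$ is ${\bf 1}$), so that the substitution $f(x)\mapsto f(\mathcal H(x))$ is a well-defined ring endomorphism of $\CC[[x]]$ and $\mathcal K(\mathcal H)$ is exactly the image of $\mathcal K$ under it. Choose a nonzero $P(x,z)=\sum_{j=0}^{m}c_j(x)z^{j}\in\CC[x,z]$ with $P(x,\mathcal K(x))=0$; here $m=\deg_z P\geq 1$, because a nonzero polynomial in $x$ alone cannot vanish as a power series. Applying the substitution endomorphism to this identity gives $P(\mathcal H,\mathcal K(\mathcal H))=0$. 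On the other hand, solving (\ref{eq1}) for $\mathcal K(\mathcal H)$ — the factor $1+\mathcal H$ being a unit — expresses it as a rational function of $x$ and $\mathcal H$:
\[
\mathcal K(\mathcal H)=R(x,\mathcal H),\qquad R(x,y):=\frac{(1-p)\,y^{2}+(1-x)\,y-x}{1+y}=\frac{(1-p)\,y^{2}+y}{1+y}-x .
\]
Multiplying $P(\mathcal H,R(x,\mathcal H))=0$ by $(1+\mathcal H)^{m}$ yields $\Phi(x,\mathcal H)=0$, where $\Phi(x,y):=(1+y)^{m}\,P\!\bigl(y,R(x,y)\bigr)=\sum_{j=0}^{m}c_j(y)\bigl((1-p)y^{2}+(1-x)y-x\bigr)^{j}(1+y)^{m-j}\in\CC[x,y]$.

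The only substantial step — and the one I expect to be delicate — is to check that $\Phi\neq 0$, and this is where the precise form of $R$ is used. Since $(1+y)^{m}\neq 0$, it suffices to see that $P\!\bigl(y,R(x,y)\bigr)\neq 0$ in $\CC(x,y)$. Writing $R(x,y)=g(y)-x$ with $g(y):=\bigl((1-p)y^{2}+y\bigr)/(1+y)$, we have $P\!\bigl(y,R(x,y)\bigr)=\sum_{j=0}^{m}c_j(y)\bigl(g(y)-x\bigr)^{j}$, a polynomial of degree $m$ in $x$ over the field $\CC(y)$ with leading coefficient $(-1)^{m}c_m(y)$, which is nonzero since $c_m\neq 0$; hence $\Phi\neq 0$. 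Because $\Phi\neq 0$ while $\Phi(x,\mathcal H)=0$, the polynomial $\Phi$ must involve $y$ (otherwise it would be a nonzero element of $\CC[x]$ vanishing as a power series), so $\mathcal H$ is a root of the nonzero polynomial $\Phi(x,y)\in\CC(x)[y]$ and is therefore algebraic over $\CC(x)$. Everything else is formal bookkeeping with the equations of Lemma~\ref{lem:hereditary-series} and with the ring structure of algebraic series.
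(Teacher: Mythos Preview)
Your argument is correct and is precisely the natural way to flesh out the corollary, which the paper states without proof immediately after Lemma~\ref{lem:hereditary-series}. The paper's implicit reasoning is simply ``equation~(\ref{eq1}) together with the algebraicity of $\mathcal K$ forces $\mathcal H$ to be algebraic, and then~(\ref{eq2}) handles the rest''; you have supplied the missing elimination step, and in particular the verification that the eliminant $\Phi$ is nonzero, which hinges on the pleasant fact that $R(x,y)=g(y)-x$ is \emph{linear} in $x$, so that $P(y,R(x,y))$ has $x$-degree exactly $m$ with leading coefficient $(-1)^m c_m(y)\neq 0$.
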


\section{Well-quasi-ordered hereditary classes} \label{sect:well-quasi-order}
Let $\mathfrak{C}$ be a subclass of $\overline \Omega_\mu$ and $\mathcal{A}$ be a poset. Set $\mathfrak{C}.\mathcal{A}:=\{(\mathcal{R},f):\mathcal{R}\in \mathfrak{C},
f:V(\mathcal{R})\rightarrow \mathcal{A}\}$ and
$(\mathcal{R},f)\leq (\mathcal{R'},f')$ if there is an embedding $h:\mathcal{R}\rightarrow \mathcal{R'}$ such that $f(x)\leq f'(h(x))$ for all $x\in
V(\mathcal{R})$.\\ We recall that $\mathcal{A}$ is
\textit{well-quasi ordered (wqo)} if $\mathcal{A}$ contains no infinite antichain and no infinite descending chain.
We say that $\mathfrak{C}$ is
\textit{hereditary wqo} if $\mathfrak{C}.\mathcal{A}$
is \textit{wqo} for every wqo  $\mathcal{A}$. It is
clear that every class  which is hereditary wqo  is wqo. If $\mathfrak{C}$ is reduced to a single structure $\mathcal{R}$, it is hereditary wqo provided that $\mathcal R$ is finite (this follows from the fact that if $\mathcal A$ is wqo then its power $\mathcal A^n$ ordered coordinatewise is wqo for each integer $n$).  If $\mathcal R$ is infinite, this does not hold. Also, a finite union of hereditary wqo  classes is hereditary  wqo; hence every finite subclass
$\mathfrak C$ of $\Omega_{\mu}$ is hereditary wqo.

A longstanding open question ask whether $\mathfrak{C}$ is hereditary wqo whenever the class $\mathfrak{C}.\underline 2$ of the elements of $\mathfrak{C}$ labelled by  $\underline 2$, the $2$-element antichain, is wqo.

If $Ch$ is the class of finite chains, $Ch.\mathcal A$ identifies to the set $\mathcal A^*$
of finite words over the alphabet $\mathcal A$ equipped with the Higman ordering. The fact that $Ch$ is hereditary wqo is a famous result due to Higman \cite{higman}. We also note the following fact:

\begin{fact}
If a subclass $\mathfrak C$ of $\Omega_\mu$ (with $I$ finite) is hereditary wqo, then $\downarrow\mathfrak C$, the least hereditary subclass of $\Omega_\mu$ containing $\mathfrak C$, is hereditary wqo.
\end{fact}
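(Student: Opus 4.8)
The plan is to adjoin a fresh bottom element $\bot$ to the label poset and to lift each labelled member of $\downarrow\mathfrak C$ to a labelled member of $\mathfrak C$ by labelling the ``unused'' vertices with $\bot$; the point will be that this lift reflects the order. So fix a wqo poset $\mathcal A$; we must show that $(\downarrow\mathfrak C).\mathcal A$ is wqo. Let $\mathcal A_\bot$ denote $\mathcal A$ together with a new element $\bot$ put strictly below every element of $\mathcal A$. Adjoining a bottom element creates no infinite antichain and no infinite descending chain, so $\mathcal A_\bot$ is again wqo. Given $(\mathcal S,f)\in (\downarrow\mathfrak C).\mathcal A$, the definition of $\downarrow\mathfrak C$ provides some $\mathcal R\in\mathfrak C$ and an embedding $h:\mathcal S\rightarrow\mathcal R$; since $h$ is injective we may define $g:V(\mathcal R)\rightarrow\mathcal A_\bot$ by $g(h(x)):=f(x)$ for $x\in V(\mathcal S)$ and $g(y):=\bot$ for $y\notin h(V(\mathcal S))$, so that $(\mathcal R,g)\in\mathfrak C.\mathcal A_\bot$.

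The key step is the following reflection property. Suppose $(\mathcal R,g)$ and $(\mathcal R',g')$ arise in this way from $(\mathcal S,f),h$ and from $(\mathcal S',f'),h'$ respectively, and $(\mathcal R,g)\leq(\mathcal R',g')$; then $(\mathcal S,f)\leq(\mathcal S',f')$. Indeed, pick an embedding $k:\mathcal R\rightarrow\mathcal R'$ with $g(y)\leq g'(k(y))$ for all $y\in V(\mathcal R)$. For $x\in V(\mathcal S)$ one gets $f(x)=g(h(x))\leq g'(k(h(x)))$; since $f(x)\in\mathcal A$ and $\bot$ lies strictly below $\mathcal A$, this forces $g'(k(h(x)))\neq\bot$, i.e. $k(h(x))\in h'(V(\mathcal S'))$. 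Hence $\psi:=(h')^{-1}\circ k\circ h$ is a well-defined map $V(\mathcal S)\rightarrow V(\mathcal S')$. Now $h$ is an isomorphism of $\mathcal S$ onto $\mathcal R\restriction_{h(V(\mathcal S))}$, $k$ restricts to an isomorphism of $\mathcal R\restriction_{h(V(\mathcal S))}$ onto $\mathcal R'\restriction_{k(h(V(\mathcal S)))}$, this last set is contained in $h'(V(\mathcal S'))$, and $(h')^{-1}$ restricts to an isomorphism of $\mathcal R'\restriction_{h'(V(\mathcal S'))}$ onto $\mathcal S'$; composing, $\psi$ is an embedding of $\mathcal S$ into $\mathcal S'$. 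Moreover $f(x)\leq g'(k(h(x)))=g'(h'(\psi(x)))=f'(\psi(x))$ for every $x\in V(\mathcal S)$. This is exactly $(\mathcal S,f)\leq(\mathcal S',f')$.

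With the reflection property in hand the proof concludes quickly. Take any sequence $\big((\mathcal S_n,f_n)\big)_{n\in\NN}$ in $(\downarrow\mathfrak C).\mathcal A$, choose for each $n$ a pair $(\mathcal R_n,h_n)$ as above, and form $(\mathcal R_n,g_n)\in\mathfrak C.\mathcal A_\bot$. Since $\mathfrak C$ is hereditary wqo and $\mathcal A_\bot$ is wqo, $\mathfrak C.\mathcal A_\bot$ is wqo; hence there are $i<j$ with $(\mathcal R_i,g_i)\leq(\mathcal R_j,g_j)$, and by the reflection property $(\mathcal S_i,f_i)\leq(\mathcal S_j,f_j)$. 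Thus every infinite sequence in $(\downarrow\mathfrak C).\mathcal A$ contains such a pair, which is the standard characterisation of wqo (equivalently: $(\downarrow\mathfrak C).\mathcal A$ has no infinite descending chain, as embeddings do not increase cardinality and $\mathcal A$ is wqo, and the argument above rules out infinite antichains). Since $\mathcal A$ was an arbitrary wqo poset, $\downarrow\mathfrak C$ is hereditary wqo.

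The only delicate point — and the one deserving care when the sketch is written out — is the verification that $\psi$ is genuinely an embedding of $\mathcal S$ into $\mathcal S'$, not merely a map compatible with the labels; this is where the ``fill with $\bot$'' device is used twice, once to force the image of $k\circ h$ inside $h'(V(\mathcal S'))$ and once to recover the labels through $f'$. Everything else is bookkeeping.
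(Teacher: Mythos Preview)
The paper states this result as a \textbf{Fact} without supplying a proof, so there is nothing to compare your argument against directly. Your proof is correct: the device of adjoining a bottom element $\bot$ to the label poset and padding the unused vertices with $\bot$ is exactly the right idea, and your verification of the reflection property is careful and complete. In particular, the crucial step --- that $f(x)\in\mathcal A$ forces $g'(k(h(x)))\neq\bot$ because $\bot$ is strictly below every element of $\mathcal A$ --- is handled cleanly, and the composition $(h')^{-1}\circ k\circ h$ is indeed an embedding of relational structures since each factor is an isomorphism onto its image. The final appeal to the ``good pair'' characterisation of wqo is standard and suffices on its own; your parenthetical remark about descending chains is not needed but does no harm. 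One cosmetic note: the hypothesis that $I$ is finite plays no role in your argument, which is fine --- the paper likely includes it only to stay within the ambient setting where $\Omega_\mu$ is well-behaved.
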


\noindent We recall the following result of
\cite{pouzet 72}.

\begin{theorem}\label{thm:bounds} If the signature is finite, a subclass of $\Omega_\mu$ which is hereditary and hereditary wqo has finitely many bounds.
\end{theorem}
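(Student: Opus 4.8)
The plan is to argue by contradiction: assuming that $\mathfrak{C}$ has infinitely many bounds, I will exhibit two of them, one strictly embeddable into the other — which is absurd, since by definition of a bound everything strictly below the larger one lies in $\mathfrak{C}$, whereas no bound lies in $\mathfrak{C}$. Since the signature is finite there are only finitely many structures on at most one element, so we may fix an infinite sequence $\mathcal{B}_1,\mathcal{B}_2,\dots$ of pairwise non-isomorphic bounds of $\mathfrak{C}$, each with at least two elements.

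The key device is \emph{one-point deletion together with a colouring}. For each $n$ choose a vertex $a_n\in V(\mathcal{B}_n)$ and set $\mathcal{C}_n:=\mathcal{B}_n\restriction_{V(\mathcal{B}_n)\setminus\{a_n\}}$; since $\mathcal{B}_n$ is a bound and $\mathcal{C}_n$ embeds strictly into it, $\mathcal{C}_n\in\mathfrak{C}$. To be able to recover $\mathcal{B}_n$ from $\mathcal{C}_n$ we record the quantifier-free type of $a_n$ over $\mathcal{C}_n$. Passing to an infinite subsequence we may assume that the truth values of $a_n\rho_i a_n$ ($i\in I$) are independent of $n$. In the binary case, for each vertex $x$ of $\mathcal{C}_n$ let $f_n(x)$ be the element of the finite set $\mathcal{A}:=\{0,1\}^{2|I|}$ recording the truth values of $x\rho_i a_n$ and $a_n\rho_i x$ for $i\in I$. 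Then $(\mathcal{C}_n,f_n)\in\mathfrak{C}.\mathcal{A}$, and the pair $(\mathcal{C}_n,f_n)$ determines $\mathcal{B}_n$ up to isomorphism.

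Now $\mathcal{A}$ is finite, hence wqo, so by the hereditary wqo hypothesis $\mathfrak{C}.\mathcal{A}$ is wqo; therefore there exist $j<k$ with $(\mathcal{C}_j,f_j)\leq(\mathcal{C}_k,f_k)$, that is, a colour-preserving embedding $h\colon\mathcal{C}_j\to\mathcal{C}_k$. Extend $h$ to $\hat h$ on $V(\mathcal{B}_j)$ by $\hat h(a_j):=a_k$. A routine verification — relations internal to $\mathcal{C}_j$ are preserved because $h$ is an embedding, relations between $a_j$ and a vertex $x$ are preserved because $h$ preserves colours and the colour of $x$ encodes exactly those relations, and the diagonal type at $a_j$ equals that at $a_k$ by the subsequence choice — shows $\hat h$ is an embedding of $\mathcal{B}_j$ into $\mathcal{B}_k$. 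As the $\mathcal{B}_i$ are pairwise non-isomorphic and $\hat h$ is injective, we must have $|V(\mathcal{B}_j)|<|V(\mathcal{B}_k)|$, so $\mathcal{B}_j$ embeds \emph{strictly} into $\mathcal{B}_k$; since $\mathcal{B}_k$ is a bound this forces $\mathcal{B}_j\in\mathfrak{C}$, contradicting the fact that $\mathcal{B}_j$ is a bound.

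The step I expect to require the most care is the colouring/encoding step when the signature has relations of arity $\geq 3$: there the quantifier-free type of the deleted vertex $a_n$ over $\mathcal{C}_n$ involves tuples of other vertices rather than single vertices, so a vertex-colouring no longer suffices. One then colours tuples of length less than $\max_{i\in I} n_i$ instead — equivalently, one runs the same deletion argument while temporarily enriching the structures of $\mathfrak{C}$ by the auxiliary relations that record the position of the deleted point — or one proceeds by induction on $\sum_{i\in I}(n_i-1)$, reducing the maximal arity by one at each step; the wqo input needed at each stage is again supplied by the hereditary wqo assumption. For the ordered binary structures that are the subject of this paper the vertex-colouring given above applies verbatim.
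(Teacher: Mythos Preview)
The paper does not prove this theorem: it is quoted from \cite{pouzet 72} without argument, and is only ever applied (in Proposition~\ref{prop:wqo}) to classes of \emph{binary} structures. So there is no proof in the paper to compare yours against.

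Your argument for the binary case is correct and is the classical one-point-deletion-plus-labelling proof. Every step checks: the passage to a subsequence fixing the diagonal type, the encoding of the remaining type information as a vertex-colouring by the finite antichain $\{0,1\}^{2|I|}$, the use of hereditary wqo to extract a colour-preserving embedding, and the extension $\hat h$ yielding a strict embedding between two bounds. This fully covers what the present paper needs.

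Your treatment of arities $\geq 3$, however, is only a gesture. The definition of hereditary wqo in this paper labels \emph{vertices}, not tuples; your first proposed fix (``colour tuples'') therefore assumes a strictly stronger hypothesis than the one stated. Your second fix (enrich $\mathcal{C}_n$ by the auxiliary lower-arity relations recording the type of $a_n$) is the right picture, but you then need the enriched class to be wqo, and nothing in the vertex-label hypothesis obviously delivers that. Your third fix, induction on $\sum_{i}(n_i-1)$, does not work as written: deleting a vertex leaves the original arity-$n_i$ relations intact on $\mathcal{C}_n$ and merely \emph{adds} relations of lower arity, so the quantity $\sum_i(n_i-1)$ does not decrease. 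The general finite-signature case genuinely requires more; since the paper only cites \cite{pouzet 72} and only uses the binary instance, your proof is adequate for present purposes, but you should not claim the higher-arity case without further argument.
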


\noindent Behavior of the profile of special hereditary classes, the \emph{ages} of Fra\"{i}ss\'e,  and the link with \textit{wqo} classes were considered by the second author in the early seventies (see \cite {pouzet.tr.1978} and \cite{pouzet} for a survey). The case
of graphs, tournaments and other combinatorial structures was elucidated
more recently (see the survey of \cite{klazar}).\\

\begin{proposition}\label{prop:wqo}
If a hereditary class $\mathfrak{D}$ of $Ind(\Omega_I)$ is hereditary \textit{wqo} then $\sum \mathfrak{D}$ is hereditary \textit{wqo} and $\sum \mathfrak{D}$ has finitely many bounds.
\end{proposition}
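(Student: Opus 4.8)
\noindent The plan is to prove the statement by a Nash--Williams minimal bad sequence argument, the well-founded parameter being the number of elements of the underlying structure. First, $\sum\mathfrak{D}$ is hereditary: if $\mathcal{S}\leq\mathcal{R}\in\sum\mathfrak{D}$ then every finite indecomposable induced substructure of $\mathcal{S}$ is one of $\mathcal{R}$, so $Ind(\mathcal{S})\subseteq Ind(\mathcal{R})\subseteq\mathfrak{D}$ and $\mathcal{S}\in\sum\mathfrak{D}$. Since the signature $I$ is finite, once $\sum\mathfrak{D}$ is shown to be hereditary wqo, Theorem~\ref{thm:bounds} gives it finitely many bounds; so it suffices to prove that $(\sum\mathfrak{D}).\mathcal{A}$ is wqo for every wqo poset $\mathcal{A}$. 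Assume it is not, and choose a bad sequence $((\mathcal{R}_n,f_n))_{n\geq1}$ in $(\sum\mathfrak{D}).\mathcal{A}$ (no $m<n$ with $(\mathcal{R}_m,f_m)\leq(\mathcal{R}_n,f_n)$) which is minimal, in the sense that $|V(\mathcal{R}_1)|$ is least possible, then $|V(\mathcal{R}_2)|$ is least given the choice of $(\mathcal{R}_1,f_1)$, and so on. Because $\mathcal{A}$ is wqo, only finitely many $\mathcal{R}_n$ can have at most one element, so after deleting those (the sequence remains bad) we may assume $|V(\mathcal{R}_n)|\geq2$ for every $n$. By Theorem~\ref{theo:decomposition}, write $\mathcal{R}_n=\underset{x\in\mathcal{S}_n}{\oplus}\mathcal{R}_{n,x}$ with $\mathcal{S}_n$ either indecomposable on at least three elements or chainable on at least two, the $V(\mathcal{R}_{n,x})$ being the maximal strong intervals of $\mathcal{R}_n$; then $|V(\mathcal{R}_{n,x})|<|V(\mathcal{R}_n)|$ and $\mathcal{R}_{n,x}\in\sum\mathfrak{D}$ by heredity, and when $\mathcal{S}_n$ is indecomposable we may take it to be the substructure of $\mathcal{R}_n$ induced on a transversal of the partition, so that $\mathcal{S}_n\in Ind(\mathcal{R}_n)\subseteq\mathfrak{D}$.

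\noindent Next, let $\mathcal{B}$ be the set of all pieces $(\mathcal{R}_{n,x},f_n\!\restriction_{V(\mathcal{R}_{n,x})})$, $n\geq1$, $x\in V(\mathcal{S}_n)$, regarded as a subclass of $(\sum\mathfrak{D}).\mathcal{A}$. A bad sequence contained in $\mathcal{B}$ would begin with a piece of some $\mathcal{R}_N$ and, prefixed with $(\mathcal{R}_1,f_1),\dots,(\mathcal{R}_{N-1},f_{N-1})$, would yield a bad sequence contradicting the minimal choice at step $N$, since each piece has strictly fewer elements than $\mathcal{R}_N$; hence $\mathcal{B}$ is wqo. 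Now split the indices according to whether $\mathcal{S}_n$ is indecomposable or chainable; one of the two subsequences is infinite, hence still bad. In the indecomposable case, label each vertex $x$ of $\mathcal{S}_n$ by the piece $(\mathcal{R}_{n,x},f_n\!\restriction)\in\mathcal{B}$; since $\mathfrak{D}$ is hereditary wqo and $\mathcal{B}$ is wqo, $\mathfrak{D}.\mathcal{B}$ is wqo, so there exist $m<n$ with $(\mathcal{S}_m,\mathrm{labels})\leq(\mathcal{S}_n,\mathrm{labels})$ in $\mathfrak{D}.\mathcal{B}$. In the chainable case, observe that, $I$ being finite, there are only finitely many isomorphism types of chainable $I$-structures on at least two elements --- each relation being determined, relative to a witnessing linear order, by its values on increasing pairs, on decreasing pairs and on the diagonal --- so on a further infinite subsequence all $\mathcal{S}_n$ have the same type; then an order embedding of their underlying chains is automatically an embedding of the $\mathcal{S}_n$, and labelling the vertices by the corresponding pieces exhibits each $\mathcal{S}_n$ as a word over $\mathcal{B}$, so Higman's theorem (that the class $Ch$ of finite chains is hereditary wqo, i.e. $\mathcal{B}^{*}$ is wqo for the Higman order) again gives $m<n$ with $(\mathcal{S}_m,\mathrm{labels})\leq(\mathcal{S}_n,\mathrm{labels})$.

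\noindent In either case we now have an embedding $h:\mathcal{S}_m\to\mathcal{S}_n$ and, for each $x\in V(\mathcal{S}_m)$, an embedding $g_x:\mathcal{R}_{m,x}\to\mathcal{R}_{n,h(x)}$ with $f_m(v)\leq f_n(g_x(v))$ for all $v\in V(\mathcal{R}_{m,x})$. Gluing the $g_x$ defines $g:V(\mathcal{R}_m)\to V(\mathcal{R}_n)$; using that the $V(\mathcal{R}_{m,x})$ and the $V(\mathcal{R}_{n,h(x)})$ are intervals and that $h$ is an embedding (so that the relations between vertices of distinct blocks are governed by $h$), one checks directly that $g$ is an embedding of $\mathcal{R}_m$ into $\mathcal{R}_n$ with $f_m\leq f_n\circ g$, that is $(\mathcal{R}_m,f_m)\leq(\mathcal{R}_n,f_n)$, contradicting badness. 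Therefore $(\sum\mathfrak{D}).\mathcal{A}$ is wqo for every wqo $\mathcal{A}$, so $\sum\mathfrak{D}$ is hereditary wqo, and Theorem~\ref{thm:bounds} then yields that $\sum\mathfrak{D}$ has finitely many bounds.

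\noindent The step I expect to be the main obstacle is the proof that $\mathcal{B}$ is wqo: this is the heart of the Nash--Williams machinery and works precisely because each piece $\mathcal{R}_{n,x}$ is strictly smaller than $\mathcal{R}_n$, which is guaranteed by $\mathcal{S}_n$ having at least two elements --- so the decomposition of Theorem~\ref{theo:decomposition} is what makes the induction go through. The second delicate point is keeping the chainable quotients under control so that Higman's lemma, rather than an ad hoc diagonal argument, handles them, which is exactly where finiteness of $I$ is used. By comparison, the heredity of $\sum\mathfrak{D}$, the lifting of a quotient embedding to an embedding of the full structures, and the final appeal to Theorem~\ref{thm:bounds} are routine.
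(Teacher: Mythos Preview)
Your argument is correct and follows the same underlying strategy as the paper---decompose each term into a quotient in $\mathfrak{D}$ and strictly smaller labelled pieces, show the pieces form a wqo, then apply the hypothesis that $\mathfrak{D}$ is hereditary wqo---but the execution differs in two respects. First, the paper uses the Zorn-lemma formulation (a maximal non-finitely-generated final segment $\mathcal{F}$ whose complement $\mathcal{I}$ is automatically wqo) rather than a Nash--Williams minimal bad sequence; these are equivalent packagings, and in the paper's version the wqo-ness of the pieces is immediate since each piece lies in $\mathcal{I}$. Second, and more substantively, the paper decomposes via Proposition~\ref{pro:sumlex} rather than Theorem~\ref{theo:decomposition}: this gives an \emph{indecomposable} quotient $\mathcal{S}_i$ (with at least two elements) directly, so $\mathcal{S}_i\in Ind(\mathcal{R}_i)\subseteq\mathfrak{D}$ always, and one single appeal to ``$\mathfrak{D}.\mathcal{I}$ is wqo'' finishes. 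Your choice of the Gallai decomposition forces the extra chainable case and the detour through finitely many chainable types plus Higman's lemma; this is correct but avoidable. Two minor points of presentation: in your $\mathcal{B}$-is-wqo step you should take $N$ to be the \emph{minimum} of the indices $n_k$ appearing in the bad sequence of pieces (and pass to the suffix starting where that minimum is first attained), otherwise the prefixed sequence need not be bad; and ``finitely many isomorphism types of chainable $I$-structures'' should read ``finitely many chainable types'' (i.e.\ finitely many possibilities for the values on increasing/decreasing/diagonal pairs), since of course there is one isomorphism type per cardinality.
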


\begin{proof}
The second part of the proposition follows from Theorem  \ref{thm:bounds}  above. In our case of binary structures, we may note that  the proof is straightforward. The first part uses  properties of wqo posets, and follows from
Higman' theorem  on algebras preordered by divisibility (1952) \cite{higman}. Instead of recalling the result we give a direct proof.  Let $\mathcal A$ a poset which is wqo and consider
$(\sum \mathfrak D). \mathcal A.$ If $(\sum \mathfrak D). \mathcal A$ is not wqo, then according to one of preliminary result  of Higman, it contains some non finitely generated final segment ($\mathcal F$ is
 a final segment if $x\in \mathcal F$ and $x\leq y$ imply $y\in \mathcal F$). According to Zorn lemma, there is a maximal one, say $\mathcal F$, with respect to inclusion among final segments having this property.
 Let $\mathcal I:= (\sum \mathfrak D). \mathcal A \setminus \mathcal F$ be the complement of $\mathcal F$ in $(\sum \mathfrak D). \mathcal A.$ The set $\mathcal I$ is then wqo.
Let $\mathfrak R:=(\mathcal R_0,f_0),\cdots, (\mathcal R_n,f_n), \cdots$ be an infinite antichain of minimal elements of $\mathcal F.$ As $\mathfrak D. \mathcal A$ is wqo because $\mathfrak D$ is hereditary wqo,
we can suppose that no element of this antichain is in $\mathfrak D. \mathcal A.$
Then, according to Proposition \ref{pro:sumlex} and Theorem \ref{theo:decomposition}, for every $i\geq 0$ there exists an indecomposable structure $\mathcal S_i$ and  non-empty structures
$(\mathcal R_{ix})_{x\in V(S_i)}$ such that  $\mathcal R_i= \underset{x\in \mathcal S_i}\oplus \mathcal R_{ix}$.  Since $(\mathcal R_{ix},f_i\restriction_{V(\mathcal R_{ix})})$ strictly embeds into
$(\mathcal R_i,f_i)$ we have  $(\mathcal R_{ix},f_i\restriction_{V(\mathcal R_{ix})})\in \mathcal I$ for every $i\geq 0$ and $x\in \mathcal S_i$. Since $\mathcal I$ is wqo, and $\mathfrak D$ is hereditary wqo,
$\mathfrak D.\mathcal I$ is wqo, thus the infinite sequence  $(\mathcal S_0,g_0),\cdots,(\mathcal S_i,g_i),\cdots$ of $\mathfrak D.\mathcal I$,
where $g_i(x):=(\mathcal R_{ix},f_i\restriction_{V(\mathcal R_{ix})})$,  contains an increasing pair $(\mathcal S_p,g_p)\leq (\mathcal S_q,g_q)$ for some $p<q$. Which means that
there is an embedding $h:\mathcal S_p \rightarrow \mathcal S_q$ such that $g_p(x)\leq g_q(h(x))$ for all $x\in V(S_p)$, that is
$(\mathcal R_{px},f_p\restriction_{V(\mathcal R_{px})})\leq (\mathcal R_{qh(x)},f_q\restriction_{V(\mathcal R_{qh(x)})})$ for all $x\in S_p$. It follows that  $(\mathcal R_p,f_p)=
\underset{x\in \mathcal S_p}\oplus (\mathcal R_{px},f_p\restriction_{V(\mathcal R_{px})})\leq \underset{x\in \mathcal S_q}\oplus(\mathcal R_{qx},f_q\restriction_{V(\mathcal R_{qx})})=(\mathcal R_q,f_q)$
which  contradicts that $\mathfrak R$ is an antichain. Thus
$(\sum \mathfrak D). \mathcal A$ is  wqo and hence $(\sum \mathfrak D)$ is hereditary wqo.
\end{proof}
\bigskip

\noindent Proposition \ref{prop:wqo} particulary holds if $\mathfrak{D}$ is finite. If $\mathfrak{D}$ is the class $Ind_k(\Omega_I)$ of indecomposable structures of size at
most $k$ then according to a result of Schmerl and Trotter,
1993 (\cite{S-T}), the bounds of $\sum Ind_k(\Omega_I)$ have size at most $k+2$.  When $\mathfrak{D}$ is  made of bichains, Proposition \ref{prop:wqo} was  obtained
by Albert and Atkinson \cite {A-A}.\\
An immediate corollary is:

\begin{corollary}\label{cor:wqo}
If a hereditary class of $\Omega_I$ contains only finitely many indecomposable members then  it is wqo and has finitely many bounds. \end{corollary}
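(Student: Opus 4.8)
The plan is to derive Corollary~\ref{cor:wqo} directly from Proposition~\ref{prop:wqo} together with the fact (Theorem~\ref{theo:closed}, or rather the discussion around $\sum\mathfrak D$) that a hereditary class with finitely many indecomposable members is of the form $\sum\mathfrak D$ for a finite $\mathfrak D$. Concretely, let $\mathfrak C$ be a hereditary subclass of $\Omega_I$ containing only finitely many indecomposable members. Set $\mathfrak D:=Ind(\mathfrak C)$, the collection of indecomposable members of $\mathfrak C$ (up to isomorphism); by hypothesis this is finite, and it is hereditary inside $Ind(\Omega_I)$ because $\mathfrak C$ is hereditary and an induced substructure of an indecomposable structure need not be indecomposable, but any \emph{indecomposable} induced substructure of a member of $\mathfrak C$ is again in $\mathfrak C$, hence in $\mathfrak D$.

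First I would check the identity $\mathfrak C=\sum\mathfrak D$. The inclusion $\mathfrak C\subseteq\sum\mathfrak D$ is immediate: if $\mathcal R\in\mathfrak C$ then every indecomposable induced substructure of $\mathcal R$ belongs to $\mathfrak C$, hence to $\mathfrak D$, so $Ind(\mathcal R)\subseteq\mathfrak D$, i.e. $\mathcal R\in\sum\mathfrak D$. For the reverse inclusion $\sum\mathfrak D\subseteq\mathfrak C$ one argues by induction on the number of elements using Theorem~\ref{theo:decomposition}: if $Ind(\mathcal R)\subseteq\mathfrak D$ and $\mathcal R$ has at least two elements, write $\mathcal R=\underset{x\in\mathcal S}{\oplus}\mathcal R_x$ with $\mathcal S$ indecomposable with at least three elements or chainable with at least two, and the $\mathcal R_x$ strong maximal intervals; each $\mathcal R_x$ is smaller, satisfies $Ind(\mathcal R_x)\subseteq Ind(\mathcal R)\subseteq\mathfrak D$, hence lies in $\mathfrak C$ by induction, while $\mathcal S$ (when indecomposable) lies in $\mathfrak D\subseteq\mathfrak C$; since $\mathfrak C$ is hereditary one must still see it is closed under the relevant lexicographic sums — here one uses that $\mathcal R$ embeds into a sum over an honest member of $\mathfrak C$, using a bichain refinement when $\mathcal S$ is merely chainable, so that $\mathcal R\in\mathfrak C$. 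This is the one place needing a little care, but it is the standard argument already used in the proof of Lemma~\ref{lem:union}.

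Once $\mathfrak C=\sum\mathfrak D$ with $\mathfrak D$ finite, the conclusion is immediate: $\mathfrak D$ is finite hence hereditary wqo, so Proposition~\ref{prop:wqo} gives that $\sum\mathfrak D=\mathfrak C$ is hereditary wqo and has finitely many bounds; in particular $\mathfrak C$ is wqo.

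The main obstacle is purely bookkeeping: verifying that the hereditary class $\mathfrak C$ really does coincide with $\sum\mathfrak D$, i.e. that ``finitely many indecomposable members'' is exactly the hypothesis ``$Ind(\mathcal R)\subseteq\mathfrak D$ for all $\mathcal R$''. The subtlety is that a hereditary class is not automatically reconstructible from its indecomposable members unless one knows it is closed under the lexicographic sums dictated by Gallai's theorem — but heredity together with Theorem~\ref{theo:decomposition} supplies exactly this, since every member decomposes as a sum whose pieces (the strong maximal intervals and the indecomposable quotient) are again \emph{in the class}. After that step, everything is a one-line appeal to Proposition~\ref{prop:wqo}.
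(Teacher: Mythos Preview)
Your argument contains a genuine error: the equality $\mathfrak C=\sum\mathfrak D$ is false in general. A hereditary class is \emph{not} determined by its indecomposable members. For a concrete counterexample, take $\mathfrak C$ to be the class of all chains of length at most $5$ (in the signature of a single binary relation). This class is hereditary and its only indecomposable members have at most two elements, so $\mathfrak D$ consists of chains of size $\leq 2$; but then $\sum\mathfrak D$ is the class of \emph{all} finite chains, strictly larger than $\mathfrak C$. Your inductive step for $\sum\mathfrak D\subseteq\mathfrak C$ tacitly assumes $\mathfrak C$ is closed under lexicographic sums, which heredity does not provide; the appeal to ``the standard argument in Lemma~\ref{lem:union}'' is misplaced, since that lemma concerns a sum-closure $\mathfrak A=cl(\mathfrak D)$, which is sum-closed by construction.

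The good news is that only the inclusion $\mathfrak C\subseteq\sum\mathfrak D$ is needed, and that inclusion you proved correctly. Since $\mathfrak D$ is finite it is hereditary wqo, so Proposition~\ref{prop:wqo} gives that $\sum\mathfrak D$ is hereditary wqo. Any subclass of a hereditary wqo class is again hereditary wqo (because $\mathfrak C.\mathcal A\subseteq(\sum\mathfrak D).\mathcal A$ and a subset of a wqo is wqo), so $\mathfrak C$ is hereditary wqo, in particular wqo. Finiteness of the bounds then follows from Theorem~\ref{thm:bounds}. This is the ``immediate'' derivation the paper has in mind.
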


\noindent We say that a class $\mathfrak{C}$ of relational structures is \emph{hereditary rational}, resp. \emph{hereditary algebraic} if the generating function of every hereditary subclass of $\mathfrak{C}$
is rational, resp. algebraic.
Albert, Atkinson and Vatter \cite{A-A-V} proved that  hereditary rational classes of permutations are  wqo. This fact can be extended to hereditary  algebraic classes.

\begin{lemma} \label{lem:wqo}
A hereditary class $\mathfrak{C}$ which is hereditary algebraic is wqo.
\end{lemma}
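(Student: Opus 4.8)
The plan is to prove the contrapositive: if a hereditary class $\mathfrak{C}$ is \emph{not} wqo, then it contains a hereditary subclass whose generating series is not algebraic. First I would recall that a hereditary class fails to be wqo precisely when it contains an infinite antichain (the no-infinite-descending-chain part is automatic for classes of finite structures ordered by embeddability, since size is a strictly decreasing invariant). So fix an infinite antichain $\mathcal{A} = \{\mathcal{R}_0, \mathcal{R}_1, \dots\}$ inside $\mathfrak{C}$, which we may assume is pruned so that the sequence of sizes $|V(\mathcal{R}_n)|$ is strictly increasing (pass to a subsequence if necessary; this only shrinks the antichain). The key move is then to consider, for each subset $S \subseteq \NN$, the hereditary closure $\downarrow\{\mathcal{R}_n : n \in S\}$, and to use a counting argument showing that uncountably many of these have distinct generating series, while there are only countably many algebraic power series over a fixed countable field — forcing one of them to be non-algebraic.

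The heart of the argument is a growth dichotomy. For an infinite antichain $\mathcal{A}$ with strictly increasing sizes inside a hereditary class, I would examine $\varphi_{\downarrow \mathcal{A}}(n)$. If for some infinite $S \subseteq \NN$ the class $\downarrow\{\mathcal{R}_n : n \in S\}$ has generating series with radius of convergence $0$ (equivalently, super-exponential growth is forced along the antichain members themselves — each $\mathcal{R}_n$ already has many non-isomorphic restrictions), then that single class is not algebraic, since an algebraic power series has a positive radius of convergence, and we are done. Otherwise every such subclass grows at most exponentially, and here the relevant fact is that an algebraic generating function has coefficients satisfying a linear recurrence with polynomial coefficients (P-recursiveness), hence its coefficient sequence is determined by finitely many initial values once the recurrence is fixed; since there are only countably many such recurrences over $\QQ$ (or over whatever countable field contains the coefficients, which here are nonnegative integers), only countably many sequences can be algebraic generating functions. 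So it suffices to exhibit uncountably many distinct sequences among the $\{\varphi_{\downarrow\{\mathcal{R}_n : n \in S\}}\}_{S \subseteq \NN}$. To get this, I would argue that because the $\mathcal{R}_n$ form an antichain, the membership of $\mathcal{R}_n$ itself in a subclass is detected by the profile at level $|V(\mathcal{R}_n)|$: more precisely, if $S \neq S'$ differ at some index $n$, then one class contains $\mathcal{R}_n$ and the other, being the downward closure of structures none of which embeds $\mathcal{R}_n$ (antichain!) and all of which of size $\geq |V(\mathcal{R}_n)|$ are among a known list, does not — yielding a difference in the profile at an appropriate size. Tracking this carefully shows the map $S \mapsto (\varphi_{\downarrow\{\mathcal{R}_n : n \in S\}})$ is injective, or at least has uncountable image.

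I expect the \textbf{main obstacle} to be the bookkeeping in the last step: one must ensure that distinct subsets $S$ really do produce distinct \emph{profiles} and not merely distinct classes, because two non-isomorphic hereditary classes can a priori share a profile. The antichain hypothesis is what saves this — since no $\mathcal{R}_m$ embeds into $\mathcal{R}_n$ for $m \neq n$, adding or removing $\mathcal{R}_n$ from the generating set changes the class only ``at and above'' size $|V(\mathcal{R}_n)|$ and in a controlled way, and with the sizes strictly increasing these perturbations occur at distinct scales and cannot cancel. One clean way to package this: order $\NN$ so $|V(\mathcal{R}_0)| < |V(\mathcal{R}_1)| < \cdots$, and show by induction on the largest index where $S$ and $S'$ disagree that the profiles differ. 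Combined with the countability of algebraic series and (if needed) the positive-radius-of-convergence observation to handle the super-exponential case, this completes the proof. I would also remark that Albert--Atkinson--Vatter's original argument for the rational case follows the same template, so the extension amounts to replacing ``countably many rational functions with bounded denominator degree'' by ``countably many P-recursive sequences''.
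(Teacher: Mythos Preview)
Your approach is fundamentally the same as the paper's --- both argue by contrapositive that an infinite antichain yields uncountably many hereditary subclasses with distinct generating series, while only countably many power series with integer coefficients can be algebraic. However, two points deserve comment.

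First, the growth dichotomy you introduce (super-exponential vs.\ exponential) is unnecessary. The countability of algebraic series already handles everything: regardless of growth rate, there are only countably many algebraic series with rational coefficients, so an uncountable family of distinct series must contain a non-algebraic one. (The paper invokes a result of Retenauer --- a series with rational coefficients that is algebraic over $\mathbb{C}$ is already algebraic over $\mathbb{Q}$ --- rather than P-recursiveness, but either route gives countability.)

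Second, and more substantively, your ``bookkeeping'' step has a genuine gap. You propose to show that distinct subsets $S,S' \subseteq \NN$ yield distinct profiles ``by induction on the largest index where $S$ and $S'$ disagree,'' but for arbitrary infinite subsets there need be no such largest index. Nor is it clear that the map $S \mapsto \varphi_{\downarrow\{\mathcal{R}_n:n\in S\}}$ is injective: dropping $\mathcal{R}_n$ from the generating set may be compensated, at the level of the profile at size $|V(\mathcal{R}_n)|$, by restrictions of some larger $\mathcal{R}_m$ with $m \in S' \setminus S$. The paper sidesteps this entirely by observing that an infinite antichain yields not merely uncountably many hereditary subclasses but an uncountable \emph{chain} of them (take an uncountable chain in $(\mathcal{P}(\NN),\subseteq)$ and push it forward via $S \mapsto \downarrow\{\mathcal{R}_n:n\in S\}$). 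In a chain $\mathfrak{C}_1 \subsetneq \mathfrak{C}_2$ of hereditary classes one has $\varphi_{\mathfrak{C}_1} \leq \varphi_{\mathfrak{C}_2}$ pointwise with strict inequality somewhere, so distinct classes in the chain automatically have distinct generating series. This one-line observation replaces all of your bookkeeping.
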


\begin{proof} If  $\mathfrak{C}$ contains an infinite antichain, there are uncountably many hereditary subclasses of $\mathfrak{C}$ and in fact an uncountable chain of subclasses; these classes
 provides uncountably many generating series.  Some of these series cannot be algebraic. Indeed, according to C. Retenauer \cite{C-R},  a generating series with rational coefficients which is algebraic over
 $\mathbb C$ is algebraic over $\mathbb Q$. Since the generating series we consider have integer coefficients, there are  algebraic over $\mathbb Q$, hence there are only countably many  such series.
\end{proof}

If  $\mathfrak C$ and $\mathfrak D$ are two hereditary  classes, then the generating series satisfy the identity $\mathcal H_{\mathfrak C\cup \mathfrak D}= \mathcal H_{\mathfrak C}+\mathcal H_{\mathfrak D}-\mathcal H_{\mathfrak C\cap \mathfrak D}$. From this simple equality we have:

\begin{lemma} \label{lem:algebraic} The union of two hereditary rational (resp. algebraic) classes is hereditary rational (resp. algebraic).
\end{lemma}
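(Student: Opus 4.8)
The plan is to reduce the statement to the inclusion--exclusion identity displayed just above it, together with the observation that intersections of hereditary classes are again hereditary. So I would fix an arbitrary hereditary subclass $\mathfrak{E}$ of $\mathfrak{C}\cup\mathfrak{D}$ and first check that $\mathfrak{E}\cap\mathfrak{C}$, $\mathfrak{E}\cap\mathfrak{D}$ and $\mathfrak{E}\cap\mathfrak{C}\cap\mathfrak{D}$ are hereditary: if $\mathcal{S}\leq\mathcal{R}$ with $\mathcal{R}$ in one of these intersections, then $\mathcal{S}$ belongs to each of the (two or three) hereditary classes being intersected, hence to the intersection. Moreover $\mathfrak{E}\cap\mathfrak{C}$ is a hereditary subclass of $\mathfrak{C}$, $\mathfrak{E}\cap\mathfrak{D}$ a hereditary subclass of $\mathfrak{D}$, and $\mathfrak{E}\cap\mathfrak{C}\cap\mathfrak{D}$ a hereditary subclass of (say) $\mathfrak{C}$. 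Thus, by the hypothesis that $\mathfrak{C}$ and $\mathfrak{D}$ are hereditary rational (resp. algebraic), the three generating series $\mathcal{H}_{\mathfrak{E}\cap\mathfrak{C}}$, $\mathcal{H}_{\mathfrak{E}\cap\mathfrak{D}}$, $\mathcal{H}_{\mathfrak{E}\cap\mathfrak{C}\cap\mathfrak{D}}$ are rational (resp. algebraic).

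Next, since $\mathfrak{E}\subseteq\mathfrak{C}\cup\mathfrak{D}$, one has $\mathfrak{E}=(\mathfrak{E}\cap\mathfrak{C})\cup(\mathfrak{E}\cap\mathfrak{D})$ and $(\mathfrak{E}\cap\mathfrak{C})\cap(\mathfrak{E}\cap\mathfrak{D})=\mathfrak{E}\cap\mathfrak{C}\cap\mathfrak{D}$. Applying the identity $\mathcal{H}_{\mathfrak{C}'\cup\mathfrak{D}'}=\mathcal{H}_{\mathfrak{C}'}+\mathcal{H}_{\mathfrak{D}'}-\mathcal{H}_{\mathfrak{C}'\cap\mathfrak{D}'}$ with $\mathfrak{C}'=\mathfrak{E}\cap\mathfrak{C}$ and $\mathfrak{D}'=\mathfrak{E}\cap\mathfrak{D}$ yields $\mathcal{H}_{\mathfrak{E}}=\mathcal{H}_{\mathfrak{E}\cap\mathfrak{C}}+\mathcal{H}_{\mathfrak{E}\cap\mathfrak{D}}-\mathcal{H}_{\mathfrak{E}\cap\mathfrak{C}\cap\mathfrak{D}}$.

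Finally I would invoke the elementary fact that the rational power series (resp. the algebraic power series) over $\mathbb{C}$ are closed under addition and subtraction, being a ring; hence the right-hand side above is rational (resp. algebraic), and therefore so is $\mathcal{H}_{\mathfrak{E}}$. Since $\mathfrak{E}$ was an arbitrary hereditary subclass of $\mathfrak{C}\cup\mathfrak{D}$, this shows $\mathfrak{C}\cup\mathfrak{D}$ is hereditary rational (resp. algebraic). There is no genuine obstacle in this argument; the only point to keep in mind is the ring-closure of the relevant classes of series, and one may remark that the same reasoning extends by an immediate induction to any finite union of hereditary rational (resp. algebraic) classes.
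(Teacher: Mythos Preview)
Your argument is correct and is exactly the approach the paper intends: the paper does not spell out a proof but simply derives the lemma from the inclusion--exclusion identity $\mathcal H_{\mathfrak C\cup \mathfrak D}= \mathcal H_{\mathfrak C}+\mathcal H_{\mathfrak D}-\mathcal H_{\mathfrak C\cap \mathfrak D}$ stated immediately before it. Your write-up is just a careful unpacking of that one-line hint, correctly handling the ``hereditary'' quantifier by intersecting an arbitrary hereditary subclass $\mathfrak E$ with $\mathfrak C$ and $\mathfrak D$.
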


\begin{corollary} A minimal non-hereditary rational  or a minimal non-hereditary algebraic class $\mathfrak C$ is the age of some relational structure.
\end{corollary}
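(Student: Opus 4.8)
The plan is to reduce the statement to the description of ages of relational structures as the up-directed hereditary classes and then to use Lemma \ref{lem:algebraic}. Recall from Section \ref{sect:basic} that, the signature being finite, a subclass of $\Omega_\mu$ is the age of some relational structure as soon as it is an \emph{ideal}, that is, non-empty, hereditary and up-directed (Fra\"{\i}ss\'{e} 1954). So, writing $\mathfrak C$ for a minimal non-hereditary rational class (the algebraic case is verbatim the same, replacing ``rational'' by ``algebraic'' throughout and still invoking Lemma \ref{lem:algebraic}), and understanding ``minimal'' within the hereditary classes, it suffices to check that $\mathfrak C$ is non-empty, hereditary, and up-directed. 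Non-emptiness is clear since the empty class has generating series $0$, which is rational; $\mathfrak C$ is hereditary by hypothesis; so the whole content lies in up-directedness.

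The heart of the argument is to prove up-directedness by contradiction, exactly as one shows that a hereditary class which is not up-directed decomposes as a union of two proper hereditary subclasses. Assume $\mathfrak C$ is not up-directed and choose $\mathcal R, \mathcal S \in \mathfrak C$ having no common upper bound in $\mathfrak C$. Put $\mathfrak C_1 := \{\mathcal T \in \mathfrak C : \mathcal R \not\leq \mathcal T\}$ and $\mathfrak C_2 := \{\mathcal T \in \mathfrak C : \mathcal S \not\leq \mathcal T\}$. By transitivity of the embeddability quasi-order, each $\mathfrak C_i$ is a hereditary subclass of $\mathfrak C$; it is a \emph{proper} subclass because $\mathcal R \in \mathfrak C \setminus \mathfrak C_1$ and $\mathcal S \in \mathfrak C \setminus \mathfrak C_2$; and $\mathfrak C = \mathfrak C_1 \cup \mathfrak C_2$, since any $\mathcal T \in \mathfrak C$ outside both would satisfy $\mathcal R \leq \mathcal T$ and $\mathcal S \leq \mathcal T$, contradicting the choice of $\mathcal R$ and $\mathcal S$. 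By minimality of $\mathfrak C$, the classes $\mathfrak C_1$ and $\mathfrak C_2$ are hereditary rational, whence by Lemma \ref{lem:algebraic} their union $\mathfrak C = \mathfrak C_1 \cup \mathfrak C_2$ is hereditary rational, contradicting the choice of $\mathfrak C$. Therefore $\mathfrak C$ is up-directed.

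It then follows that $\mathfrak C$ is a non-empty hereditary up-directed subclass of $\Omega_\mu$, i.e.\ an ideal, and since the signature is finite, Fra\"{\i}ss\'{e}'s theorem produces a relational structure whose age is $\mathfrak C$. I do not expect any computational difficulty here; the only point requiring care is the elementary lemma that a hereditary class fails to be up-directed exactly when it is the union of two proper hereditary subclasses, realized above by taking $\mathfrak C_1,\mathfrak C_2$ to be the complements in $\mathfrak C$ of the principal filters generated by $\mathcal R$ and by $\mathcal S$, and checking that these are hereditary, proper, and cover $\mathfrak C$. One should also keep the finiteness of the signature explicit, since it is what makes every ideal an age.
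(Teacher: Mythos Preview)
Your proof is correct and follows essentially the same approach as the paper: the paper's two-line argument observes that by Lemma~\ref{lem:algebraic} the class $\mathfrak C$ cannot be the union of two proper hereditary subclasses, hence is an ideal and thus an age, and you have simply unpacked both implications in detail (constructing the decomposition $\mathfrak C_1\cup\mathfrak C_2$ from a failure of up-directedness, and invoking Fra\"{\i}ss\'{e}'s theorem with the finite-signature hypothesis made explicit).
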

\begin{proof} According to Lemma \ref{lem:algebraic}, $\mathfrak C$ cannot be the union of two proper hereditary subclasses, hence this is an ideal, thus an age.
\end{proof}

\begin{theorem}\label{theo:algebraic}
 Let $d$ be an integer. If  an  hereditary class $\mathfrak C$ of $\Gamma _{d}$ contains only finitely many  indecomposable members then it is  algebraic.
\end{theorem}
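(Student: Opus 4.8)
The strategy is to reduce Theorem \ref{theo:algebraic} to the bichain case (Albert--Atkinson) via the machinery of Section \ref{sect:indecomposable}, and in fact to show that the whole enumeration reduces to solving a finite system of algebraic equations of the type appearing in Lemma \ref{lem:hereditary-series}. Let $\mathfrak C$ be a hereditary subclass of $\Gamma_d$ with only finitely many indecomposable members, and let $\mathfrak D := Ind(\mathfrak C)$ be that finite set of finite indecomposable structures. Since $\mathfrak C$ is hereditary, $\mathfrak D$ is a hereditary subclass of $Ind(\Gamma_d)$, $\mathfrak D_{(1)} = \{\mathbf 1\}$, and by Theorem \ref{theo:closed} (all members being reflexive) we have $\mathfrak C \subseteq \sum\mathfrak D = cl(\mathfrak D)$. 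The first step is to argue that in fact $\mathfrak C$ is \emph{exactly} a sum-closed class: because $\mathfrak C$ is hereditary and closed under the Gallai decomposition (Theorem \ref{theo:decomposition}: every member of $\mathfrak C$ is a lexicographic sum of strong intervals over an indecomposable or chainable quotient, and those pieces lie in $\mathfrak C$), one gets $\mathfrak C = cl(\mathfrak D)$ provided $\mathfrak C$ contains $\mathbf 1$ and the empty structure — i.e. $\mathfrak C$ is the sum-closure of its finite set of indecomposables. (If $\mathfrak C$ is finite there is nothing to prove; so assume it is infinite, forcing $\mathbf 1\in\mathfrak C$.)

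Next, apply Lemma \ref{lem:union} with $\mathfrak A := \mathfrak C = cl(\mathfrak D)$. This gives the decomposition \eqref{eq:classe}, $\mathfrak A = \{\mathbf 1\}\cup\bigcup_{\mathcal S\in\mathfrak D_{(\geq 2)}}\mathfrak A_{\mathcal S}$ into pairwise disjoint pieces, together with \eqref{eq:classeindec}, $\mathfrak A(\mathcal S) = \mathfrak A\setminus\mathfrak A_{\mathcal S}$ for the finitely many $\mathcal S\in\mathfrak D_{(2)}$. Then Lemma \ref{lem:hereditary-series} yields the single polynomial relation \eqref{eq1}, namely
\[
(p-1)\mathcal H^2 + \bigl(x - 1 + \mathcal K(\mathcal H)\bigr)\mathcal H + x + \mathcal K(\mathcal H) = 0,
\]
where $p = |\mathfrak D_{(2)}|$ is finite and $\mathcal K$ is the generating \emph{polynomial} of $\mathfrak D_{(\geq 3)}$ — a genuine polynomial, since $\mathfrak D$ is finite. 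Writing $\mathcal K(\mathcal H) = \sum_{j} k_j \mathcal H^{j}$ with finitely many nonzero $k_j$, the left-hand side is a nonzero polynomial in $x$ and $\mathcal H$; hence $\mathcal H = \mathcal H_{\mathfrak C}$ satisfies $Q(x,\mathcal H) = 0$ for an explicit $Q\in\mathbb Q[x,y]$, so $\mathcal H_{\mathfrak C}$ is algebraic. This is the whole content of the theorem; Corollary \ref{cor:wqoalgebraic} is essentially this statement packaged, and the equation for $\mathcal H_{\mathfrak A(\mathcal S)}$ in \eqref{eq2} then shows the $\mathcal S$-indecomposable subclasses are algebraic too.

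The one point that needs care — and I expect it to be the only real obstacle — is the justification that $\mathfrak D := Ind(\mathfrak C)$ is itself algebraic, i.e. that its generating polynomial $\mathcal K$ (and the count $p$) are well-defined finite data: this is immediate here because ``only finitely many indecomposable members'' means $\mathfrak D$ is a \emph{finite} set of structures, so $\mathcal K$ is literally a polynomial and Corollary \ref{cor:wqoalgebraic} applies verbatim. A secondary subtlety is the passage $\mathfrak C = cl(\mathfrak D)$: one must check that the strong maximal intervals produced by Theorem \ref{theo:decomposition} are again in $\mathfrak C$ (clear, since $\mathfrak C$ is hereditary) and that an \emph{ordered} chainable quotient decomposes, via Lemma \ref{lem:union}'s splitting of a chainable quotient into iterated two-element sums, into the sums over the $\mathcal S\in\mathfrak D_{(2)}$ that actually occur as two-element restrictions inside members of $\mathfrak C$ — which is exactly why the bookkeeping in Lemma \ref{lem:union} was set up over $\mathfrak D_{(\geq 2)}$ rather than over all two-element ordered structures of $\Gamma_d$. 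Once these identifications are in place, the theorem follows by quoting Lemma \ref{lem:hereditary-series} and Corollary \ref{cor:wqoalgebraic}, and one may remark that when $d=1$ and $\mathfrak C$ consists of bichains this recovers the Albert--Atkinson theorem, with equation \eqref{eq1} specializing to the known functional equation for wreath-closed permutation classes.
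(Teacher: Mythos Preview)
Your argument has a genuine gap: the claim that $\mathfrak C = cl(\mathfrak D)$ (where $\mathfrak D = Ind(\mathfrak C)$) is false in general. Hereditariness gives only the inclusion $\mathfrak C \subseteq cl(\mathfrak D) = \sum\mathfrak D$; it does \emph{not} make $\mathfrak C$ sum-closed. The Gallai decomposition says each $\mathcal R\in\mathfrak C$ splits as a lexicographic sum whose pieces lie in $\mathfrak C$, but nothing forces an arbitrary lexicographic sum of members of $\mathfrak C$ over some $\mathcal S\in\mathfrak D$ to remain in $\mathfrak C$. Concretely, for $d=1$ take $\mathfrak C$ to be the bichains corresponding to $Forb(\{2413,3142,321\})$. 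Then $Ind(\mathfrak C)$ consists only of the trivial simples $\{1,12,21\}$, so $cl(\mathfrak D)$ is the full class of separable permutations; yet $21\oplus_{21}21 = 4321\notin\mathfrak C$. Thus $\mathfrak C\subsetneq cl(\mathfrak D)$, and Lemma~\ref{lem:hereditary-series} cannot be applied to $\mathfrak C$ directly.

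What you have actually proved is the base case $\mathfrak C = \sum\mathfrak D$ of the paper's argument (this is precisely Corollary~\ref{cor:wqoalgebraic}). The substantive work is the case $\mathfrak C\subsetneq\mathfrak A:=\sum\mathfrak D$. There the paper writes $\mathfrak C=\mathfrak A\langle\overline{\mathcal B}\rangle$ for a finite list $\overline{\mathcal B}$ of bounds (available because $\mathfrak A$ is wqo by Corollary~\ref{cor:wqo}), and then analyses each piece $\mathfrak C_{\mathcal S}=(\oplus_{\mathcal S}\mathfrak C)\cap\mathfrak C$ via Lemmas~\ref{lem:intersection}, \ref{lem:egality}, \ref{lem:twoelements}: these express $\mathfrak C_{\mathcal S}$ as a finite union of sums $\oplus_{x\in\mathcal S}\mathfrak D_x$ where each $\mathfrak D_x$ is either $\mathfrak C$ itself or a proper strong subclass of $\mathfrak C$. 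An induction over the (wqo) poset of hereditary subclasses then shows the generating series of the strong subclasses are already algebraic, and inclusion--exclusion turns the decomposition $\mathfrak C=\{\mathbf 1\}\cup\bigcup_{\mathcal S}\mathfrak C_{\mathcal S}$ into a nontrivial polynomial relation for $\mathcal H_{\mathfrak C}$ with algebraic-series coefficients. Your proposal omits this entire inductive mechanism; without it the theorem is only established for sum-closed classes.
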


\noindent We follows essentially  the lines of Albert-Atkinson proof.
We  do an inductive proof over the hereditary subclasses of $\mathfrak C$. But for that,  we need to prove more, namely that $\mathfrak C$ and each $\mathfrak C(\mathcal S)$ for $\mathcal S\in  Ind(\mathfrak C)_{(2)}$,  are algebraic (this is the only difference with Albert-Atkinson proof). To avoid unessential complications,  we take out the empty relational structure of $\Gamma_d$, that is we suppose that $\mathfrak C$ is made of non-empty structures. Let   $\mathfrak A:=\sum Ind(\mathfrak{C})$. If $\mathfrak C= \mathfrak A$ then by Corollary \ref {cor:wqoalgebraic},  $\mathfrak C$ and each $\mathfrak C(\mathcal S)$ for $\mathcal S\in  Ind(\mathfrak C)_{(2)}$, are algebraic.  Thus the result is proved. If $\mathfrak C\not = \mathfrak A$, we may suppose that for each proper hereditary subclass $\mathfrak C'$ of $\mathfrak C$, both $\mathfrak C'$ and  $\mathfrak C'(\mathcal S)$ for each $\mathcal S\in  Ind(\mathfrak C')_{(2)}$, are algebraic. Indeed,  otherwise, since by  Corollary \ref{cor:wqo},  $\mathfrak C$ is wqo, it contains a minimal hereditary subclass not satisfying this property and we may replace $\mathfrak C$ by this subclass.
Let  $\mathcal S\in  Ind(\mathfrak C)_{(2)}$. Let  $\mathfrak C(\mathcal S)$ be the subclass of $\mathfrak C$ made of $\mathcal S$-indecomposable members of $\mathfrak C$. Let  $0$ and $1$, with $0<1$, be the two elements of $V(\mathcal S)$,  we  set
$\mathfrak C_{\mathcal S}:= (\mathfrak C(\mathcal S)\underset {\mathcal S }\oplus\mathfrak C)\cap \mathfrak C$. Let $\mathcal S \in Ind(\mathfrak C)_{(\geq 3)}$ we   set
$\mathfrak C_{\mathcal S}:= (\underset{ \mathcal{S}}{ \oplus}\mathfrak{C})\cap \mathfrak C$.

As in Lemma \ref{lem:union} we have
\begin{equation}\label{eq:global}
\mathfrak C= \{\bf1\} \cup \underset {\mathcal S\in Ind(\mathfrak C)_{(\geq 2)}}\bigcup  \mathfrak C_{\mathcal S}
\end{equation}
and
\begin{equation}\label{eq:classeindecbis}\mathfrak C(\mathcal S)=\mathfrak C \setminus \mathfrak C_{\mathcal S} \; \text{for every}\;  \mathcal S \in Ind(\mathfrak C)_{(2)}.\end{equation}


 Let $\mathcal H$ and  $\mathcal K$ be  the generating series of
$\mathfrak{C}$ and  $Ind(\mathfrak C)_{(\geq 3)}$ respectively.

Let  $\mathcal H_{\mathfrak C_{(2)}}$ and $\mathcal H_{\mathfrak C_{(\geq 3)}}$ be the generating series of $\mathfrak C_{(2)}:=\underset{\mathcal S\in Ind(\mathfrak C)_{(2)}} \bigcup {\mathfrak C_\mathcal S}$ and of $\mathfrak C_{(\geq 3)}:=\underset{\mathcal S\in Ind(\mathfrak C)_{(\geq 3)}} \bigcup {\mathfrak C_\mathcal S}$.

%
We have:
\begin{equation}\label{eq:global}
\mathcal  H_{\mathfrak C}= x + \mathcal H_{\mathfrak C_{(2)}}+\mathcal H_{\mathfrak C_{(\geq 3)}}\end{equation}
and

\begin{equation}\label{eq:classeindecter}\mathcal H_{\mathfrak C(\mathcal S)}=\mathcal H_{\mathfrak C} - \mathcal H_{\mathfrak C_{\mathcal S}} \; \text {for every}\; \mathcal S\in  \mathfrak D_{(2)}.\end{equation}


 We deduce that  $\mathcal  H_{\mathfrak C}$  and $\mathcal H_{\mathfrak C(\mathcal S)}$ are algebraic for every  $\mathcal S \in Ind(\mathfrak C)_{(2)}$, from the following claims that we will prove afterwards\begin{claim}\label{claim:claim1}The generating series  of $\mathcal H_{\mathfrak C_{(\geq 3)}}$ is  a polynomial in the generating series  $\mathcal H_{\mathfrak C}$  whose coefficients are  algebraic series.

\end{claim}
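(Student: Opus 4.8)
The plan is to treat, for each fixed $\mathcal S\in Ind(\mathfrak C)_{(\geq 3)}$ (of which there are only finitely many), the class $\mathfrak C_{\mathcal S}=(\underset{\mathcal S}{\oplus}\mathfrak C)\cap\mathfrak C$ separately and then to sum. Put $n:=\vert V(\mathcal S)\vert\geq 3$ and $\mathcal H:=\mathcal H_{\mathfrak C}$. By Proposition \ref{pro:sumlex} the decomposition $\mathcal R=\underset{x\in\mathcal S}{\oplus}\mathcal R_x$ of a member of $\mathfrak C_{\mathcal S}$ into strong maximal intervals is unique; and since our structures are ordered, $\mathcal S$ is rigid, so the members of $\mathfrak C_{\mathcal S}$ correspond bijectively to the $V(\mathcal S)$-indexed families $(\mathcal R_x)_{x\in V(\mathcal S)}$ of members of $\mathfrak C$ whose lexicographic sum over $\mathcal S$ again lies in $\mathfrak C$, the total size being $\sum_x\vert V(\mathcal R_x)\vert$. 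Exactly as in Lemma \ref{lem:union}, the classes $\mathfrak C_{\mathcal S}$, $\mathcal S\in Ind(\mathfrak C)_{(\geq 3)}$, are pairwise disjoint, so it suffices to prove that each $\mathcal H_{\mathfrak C_{\mathcal S}}$ is a polynomial in $\mathcal H$ with algebraic coefficients.

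The heart of the matter is to express the condition ``$\underset{x\in\mathcal S}{\oplus}\mathcal R_x\in\mathfrak C$'', given that each $\mathcal R_x\in\mathfrak C$, in terms of bounds. By Corollary \ref{cor:wqo}, $\mathfrak C=Forb(\mathfrak B)$ for a finite set $\mathfrak B$ of bounds. The key point is that, when all $\mathcal R_x\in\mathfrak C$, a bound $\mathcal B$ embeds into $\underset{x\in\mathcal S}{\oplus}\mathcal R_x$ if and only if there is a partition $P$ of $V(\mathcal B)$ into intervals with $2\leq\vert P\vert$ and an embedding $g$ of the quotient $\mathcal B/P$ into $\mathcal S$ such that $\mathcal B\restriction_b\leq\mathcal R_{g(b)}$ for every block $b\in P$: the image of any embedding of $\mathcal B$ meets a set $Y$ of blocks of $\mathcal S$ and equals the lexicographic sum over $\mathcal S\restriction_Y$ of its traces on those blocks, the case $\vert Y\vert=1$ being impossible since then $\mathcal B\leq\mathcal R_x\in\mathfrak C$ whereas $\mathcal B\notin\mathfrak C$; conversely such data visibly assemble into an embedding. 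In particular no indecomposable bound ever embeds, and every block restriction $\mathcal B\restriction_b$ occurring is a \emph{proper} substructure of the bound $\mathcal B$, hence a member of $\mathfrak C$. There are finitely many triples $(\mathcal B,P,g)$ — the forbidden configurations — and $\mathfrak C_{\mathcal S}$ is precisely the set of families $(\mathcal R_x)$ with all $\mathcal R_x\in\mathfrak C$ that realize none of them.

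Now I would run inclusion--exclusion over the finite set of forbidden configurations. For a set $E$ of configurations, the families realizing every configuration in $E$ form a product, over $x\in V(\mathcal S)$, of the classes $\{\mathcal R\in\mathfrak C:\mathcal F_x\subseteq Age(\mathcal R)\}$, where $\mathcal F_x$ is the finite family of block restrictions that the members of $E$ demand at the coordinate $x$; hence its generating series is the product of the series of these coordinate classes. A second inclusion--exclusion inside each coordinate gives $\mathcal H_{\{\mathcal R\in\mathfrak C:\mathcal F_x\subseteq Age(\mathcal R)\}}=\sum_{\mathcal G\subseteq\mathcal F_x}(-1)^{\vert\mathcal G\vert}\mathcal H_{\mathfrak C\cap Forb(\mathcal G)}$; the summand $\mathcal G=\varnothing$ is $\mathcal H$, while for $\mathcal G\neq\varnothing$ the class $\mathfrak C\cap Forb(\mathcal G)$ is a \emph{proper} hereditary subclass of $\mathfrak C$ — because $\mathcal G$ consists of members of $\mathfrak C$ — and hence is algebraic by the inductive hypothesis in the proof of Theorem \ref{theo:algebraic}. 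Thus each coordinate contributes $\mathcal H+\psi$ with $\psi$ algebraic, each $E$-term is a polynomial in $\mathcal H$ of degree at most $n$ with algebraic coefficients, and therefore so is their alternating sum $\mathcal H_{\mathfrak C_{\mathcal S}}$; summing over $\mathcal S\in Ind(\mathfrak C)_{(\geq 3)}$ yields Claim \ref{claim:claim1}.

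I expect the main obstacle to be the bound-based description of the second paragraph: one must recognize that, once the parts $\mathcal R_x$ already lie in $\mathfrak C$, the only way a bound can appear in $\underset{x\in\mathcal S}{\oplus}\mathcal R_x$ is by spreading a non-trivial interval decomposition of the bound across at least two blocks of $\mathcal S$, that this yields only finitely many and \emph{coordinatewise} constraints, and — crucially for closing the induction — that every block restriction so obtained is a proper substructure of a bound, hence lies in $\mathfrak C$, which is exactly what makes the auxiliary classes $\mathfrak C\cap Forb(\mathcal G)$ proper. Granting this, the two layers of inclusion--exclusion and the reduction from structures to families (legitimate because ordered structures are rigid) are routine bookkeeping.
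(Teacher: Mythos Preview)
Your proof is correct and rests on the same core observation as the paper's: a bound $\mathcal B$ can embed into $\underset{x\in\mathcal S}{\oplus}\mathcal R_x$ (with all $\mathcal R_x\in\mathfrak C$) only by spreading a non-trivial interval partition of $\mathcal B$ across at least two blocks of $\mathcal S$, and every block restriction so produced is a proper substructure of a bound, hence lies in $\mathfrak C$, so the auxiliary classes are proper and the induction applies.

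The organization, however, differs. The paper packages the first step into Lemma~\ref{lem:egality}: it writes $\mathfrak C_{\mathcal S}=(\underset{x\in\mathcal S}{\oplus}\mathfrak A)\langle\overline{\mathcal B}\rangle$ (where $\overline{\mathcal B}$ are the bounds of $\mathfrak C$ inside $\mathfrak A$) as an intersection over decompositions $h\in H_{\mathcal B}$ of unions of lexicographic products, then distributes to obtain $\mathfrak C_{\mathcal S}$ directly as a finite union of products $\underset{x\in\mathcal S}{\oplus}\mathfrak D_x$ in which every factor $\mathfrak D_x$ is already a \emph{hereditary} class (either $\mathfrak C$ itself or a strong subclass). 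One round of inclusion--exclusion on this union, together with Lemma~\ref{lem:intersection} to compute the pairwise intersections, then suffices. Your route bypasses Lemma~\ref{lem:egality}: you run inclusion--exclusion over forbidden configurations first, which lands on products of the \emph{upward-closed} classes $\{\mathcal R\in\mathfrak C:\mathcal F_x\subseteq Age(\mathcal R)\}$, and then a second inclusion--exclusion inside each coordinate converts those into alternating sums of hereditary classes $\mathfrak C\cap Forb(\mathcal G)$. Your version is more self-contained (no auxiliary structural lemma) and makes explicit why the subclasses are proper; the paper's version is shorter once Lemma~\ref{lem:egality} is in hand and keeps all intermediate objects hereditary. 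The two are dual reorganizations of the same finite Boolean computation.
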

 \begin{claim}\label{claim:claim2}
%
For each $\mathcal S \in Ind(\mathfrak C)_{(2)}$, the generating series $\mathcal H_{\mathfrak C(\mathcal S)}$ of ${\mathfrak C(\mathcal S)}$ is either a linear polynomial in
the generating series  $\mathcal H_{\mathfrak C}$  of the form \begin{equation} \label {eq:almostfinal2}
\mathcal H_{\mathfrak C(\mathcal S)}=\frac{(1-\alpha)\mathcal H_{\mathfrak C}-\delta}{1+\beta};
\end{equation} whose coefficients are  algebraic series or is a rational fraction of the form \begin{equation} \label {eq:almostfinal1}
\mathcal H_{\mathfrak C(\mathcal S)}=\frac{\mathcal H_{\mathfrak C}}{1+ \mathcal H_{\mathfrak C}}.
\end{equation}
\end{claim}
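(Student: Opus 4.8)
The plan is to reduce the statement to the analysis of the single series $\mathcal H_{\mathfrak C_{\mathcal S}}$ and then to split into two regimes according to whether right $\mathcal S$-summation keeps us inside $\mathfrak C$. By \eqref{eq:classeindecter} we have $\mathcal H_{\mathfrak C(\mathcal S)}=\mathcal H_{\mathfrak C}-\mathcal H_{\mathfrak C_{\mathcal S}}$, so everything comes down to computing $\mathcal H_{\mathfrak C_{\mathcal S}}$. By Lemma \ref{lem:decomposition}, each $\mathcal R\in\mathfrak C_{\mathcal S}$ admits a \emph{unique} decomposition $\mathcal R=\mathcal R_0\underset{\mathcal S}\oplus\mathcal R_1$ with $\mathcal R_0\in\mathfrak C(\mathcal S)$ and $\mathcal R_1\in\mathfrak C$ both non-empty (the first block is the unique $\mathcal S$-indecomposable initial interval). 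Thus $(\mathcal R_0,\mathcal R_1)\mapsto\mathcal R_0\underset{\mathcal S}\oplus\mathcal R_1$ is a size-preserving bijection from the pairs satisfying the membership constraint $\mathcal R_0\underset{\mathcal S}\oplus\mathcal R_1\in\mathfrak C$ onto $\mathfrak C_{\mathcal S}$, and the whole difficulty is to understand this constraint.

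First I would treat the case where $\mathfrak C(\mathcal S)\underset{\mathcal S}\oplus\mathfrak C\subseteq\mathfrak C$, i.e.\ the constraint is vacuous. Then $\mathfrak C_{\mathcal S}=\mathfrak C(\mathcal S)\underset{\mathcal S}\oplus\mathfrak C$ and the bijection above yields the product $\mathcal H_{\mathfrak C_{\mathcal S}}=\mathcal H_{\mathfrak C(\mathcal S)}\cdot\mathcal H_{\mathfrak C}$. Substituting into $\mathcal H_{\mathfrak C(\mathcal S)}=\mathcal H_{\mathfrak C}-\mathcal H_{\mathfrak C_{\mathcal S}}$ gives $\mathcal H_{\mathfrak C(\mathcal S)}(1+\mathcal H_{\mathfrak C})=\mathcal H_{\mathfrak C}$; since $\mathcal H_{\mathfrak C}$ has non-negative coefficients and zero constant term, $1+\mathcal H_{\mathfrak C}$ is invertible and we obtain the rational fraction \eqref{eq:almostfinal1}. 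This is exactly the computation performed for $\mathcal H_{\mathfrak A(\mathcal S)}$ in Lemma \ref{lem:hereditary-series}, now carried out inside the possibly non-sum-closed class $\mathfrak C$.

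The substantial case is when $\mathfrak C(\mathcal S)\underset{\mathcal S}\oplus\mathfrak C\not\subseteq\mathfrak C$, and here I would analyse the constraint through the bounds of $\mathfrak C$. By Corollary \ref{cor:wqo}, $\mathfrak C$ is wqo and has finitely many bounds. If $\mathcal R_0\underset{\mathcal S}\oplus\mathcal R_1\notin\mathfrak C$, it embeds a bound $\mathcal B$; since $\mathcal R_0,\mathcal R_1\in\mathfrak C$, the copy of $\mathcal B$ must straddle the first $\mathcal S$-cut, so relative to that cut $\mathcal B$ splits as $\mathcal B=\mathcal B_0\underset{\mathcal S}\oplus\mathcal B_1$ with $\emptyset\neq\mathcal B_0\leq\mathcal R_0$ and $\emptyset\neq\mathcal B_1\leq\mathcal R_1$. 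Let $\mathcal P$ be the finite set of pairs $(\mathcal B_0,\mathcal B_1)$ obtained from the bounds and their $\mathcal S$-splits; then $\mathcal R_0\underset{\mathcal S}\oplus\mathcal R_1\in\mathfrak C$ iff no pair of $\mathcal P$ has both its bottom embedded in $\mathcal R_0$ and its top embedded in $\mathcal R_1$. Grouping the first blocks by the finite datum $T(\mathcal R_0):=\{\mathcal B_0:(\mathcal B_0,\mathcal B_1)\in\mathcal P,\ \mathcal B_0\leq\mathcal R_0\}$, the admissible right blocks depend on $\mathcal R_0$ only through $T(\mathcal R_0)$ and form a hereditary class $\mathfrak E_T\subseteq\mathfrak C$. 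Writing $\mathfrak C''$ for the proper hereditary subclass $\mathfrak C\cap Forb(\{\mathcal B_0:(\mathcal B_0,\mathcal B_1)\in\mathcal P\})$ (proper because each $\mathcal B_0$ lies strictly below a bound, hence in $\mathfrak C$), the first blocks with $T(\mathcal R_0)=\emptyset$ are exactly $\mathfrak C''(\mathcal S)$ and may be followed by \emph{any} $\mathcal R_1\in\mathfrak C$, while every other nonempty-$T$ class $\mathfrak E_T$ is a proper hereditary subclass of $\mathfrak C$. By the running inductive hypothesis of Theorem \ref{theo:algebraic}, the series $\mathcal H_{\mathfrak C''}$, the $\mathcal H_{\mathfrak E_T}$, and $\alpha:=\mathcal H_{\mathfrak C''(\mathcal S)}$ are all algebraic (note $\mathfrak C''(\mathcal S)$ is covered: either $\mathcal S\in Ind(\mathfrak C'')_{(2)}$, or $\mathcal S\notin\mathfrak C''$ and then $\mathfrak C''(\mathcal S)=\mathfrak C''$).

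The main obstacle is to collapse the resulting finite sum of products $\sum_T\bigl(\sum_{T(\mathcal R_0)=T}x^{|\mathcal R_0|}\bigr)\mathcal H_{\mathfrak E_T}$ to exactly the three-term linear shape. The term $T=\emptyset$ contributes $\mathcal H_{\mathfrak C''(\mathcal S)}\cdot\mathcal H_{\mathfrak C}=\alpha\,\mathcal H_{\mathfrak C}$. For the remaining first blocks I would prove, using that $\mathfrak C$ is wqo and that $\mathfrak E_{T(\mathcal R_0)}$ is monotone non-increasing as $\mathcal R_0$ grows in the embedding order, that the append-constraint \emph{stabilises}: all but finitely many constrained first blocks share one admissible right class $\mathfrak E$, so their contribution is $\mathcal H_{\mathfrak E}\bigl(\mathcal H_{\mathfrak C(\mathcal S)}-\alpha-p\bigr)$, where $p$ is the polynomial counting the finitely many exceptional blocks, whose own contribution $q$ is algebraic. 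Setting $\beta:=\mathcal H_{\mathfrak E}$ and $\delta:=q-\alpha\beta-\beta p$ (an algebraic series with zero constant term) gives $\mathcal H_{\mathfrak C_{\mathcal S}}=\alpha\,\mathcal H_{\mathfrak C}+\beta\,\mathcal H_{\mathfrak C(\mathcal S)}+\delta$; since $1+\beta$ is invertible, feeding this into $\mathcal H_{\mathfrak C(\mathcal S)}=\mathcal H_{\mathfrak C}-\mathcal H_{\mathfrak C_{\mathcal S}}$ and solving for $\mathcal H_{\mathfrak C(\mathcal S)}$ yields precisely \eqref{eq:almostfinal2}. I expect the genuine work to lie in this stabilisation step, namely ruling out infinitely many essentially distinct admissible right classes $\mathfrak E_T$, which is exactly what forces the coefficient of $\mathcal H_{\mathfrak C(\mathcal S)}$ to be a single algebraic series $\beta$ rather than an unbounded combination.
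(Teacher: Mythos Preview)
Your overall framework---reducing to $\mathcal H_{\mathfrak C_{\mathcal S}}$ via \eqref{eq:classeindecter}, splitting according to whether $\mathfrak C(\mathcal S)\underset{\mathcal S}\oplus\mathfrak C\subseteq\mathfrak C$, and in the second case analysing the constraint through the finitely many $\mathcal S$-splits $(\mathcal B_0,\mathcal B_1)$ of the bounds---is exactly the paper's approach, and your first case is handled correctly (your criterion is equivalent to the paper's ``all bounds are $\mathcal S$-indecomposable''). The gap is the \emph{stabilisation} step. The claim that all but finitely many constrained first blocks share a single admissible right class $\mathfrak E$ is false in general: if two bounds admit $\mathcal S$-splits with incomparable bottom pieces $\mathcal B_0,\mathcal B'_0$, there can be infinitely many $\mathcal R_0\in\mathfrak C(\mathcal S)$ embedding $\mathcal B_0$ but not $\mathcal B'_0$ and infinitely many doing the opposite, and these two infinite families have genuinely different right classes $\mathfrak E_T$. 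Well-quasi-ordering of $\mathfrak C$ and monotonicity of $T\mapsto\mathfrak E_T$ do not prevent this; wqo gives increasing pairs, not eventual constancy along an unordered set.

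Fortunately stabilisation is neither true nor needed. You already observed that $\mathcal P$ is finite, hence there are only finitely many types $T$; the point you missed is that this finiteness \emph{is} the whole mechanism. For each nonempty $T$ the right class $\mathfrak E_T$ is a proper hereditary subclass of $\mathfrak C$, so $\mathcal H_{\mathfrak E_T}$ is algebraic by induction; and the generating series of the first blocks of type exactly $T$ is, by inclusion--exclusion over the conditions ``$\mathcal B_0\leq\mathcal R_0$'', an integer combination of $\mathcal H_{\mathfrak C(\mathcal S)}$ and of the series $\mathcal H_{\mathfrak C'(\mathcal S)}$ for proper hereditary subclasses $\mathfrak C'=\mathfrak C\cap Forb(S)$, the latter again algebraic by induction. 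Summing the finitely many products therefore gives $\mathcal H_{\mathfrak C_{\mathcal S}}=\alpha\,\mathcal H_{\mathfrak C}+\beta\,\mathcal H_{\mathfrak C(\mathcal S)}+\delta$ with $\alpha,\beta,\delta$ algebraic, and \eqref{eq:almostfinal2} follows. This is precisely what the paper does, packaged as Lemma~\ref{lem:twoelements} (express $\mathfrak C_{\mathcal S}$ as a finite union of products $\mathfrak C'(\mathcal S)\underset{\mathcal S}\oplus\mathfrak C''$ with each factor either $\mathfrak C$ or a proper strong subclass) together with Lemma~\ref{lem:intersection} to compute the inclusion--exclusion intersections; replace your stabilisation paragraph by this finite inclusion--exclusion and the argument goes through.
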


 Substituting in formula \ref{eq:global} the values of $\mathcal H_{\mathfrak C_{(\geq 3)}}$ and $\mathcal H_{\mathfrak C_{( 2)}}$ given by Claim \ref{claim:claim1} and Claim \ref{claim:claim2} we obtain a polynomial in
$\mathcal{H_{\mathfrak C}}$ whose coefficients are algebraic series. This polynomial is not identical to zero. Indeed, it is the sum of a polynomial $A=a_0+a_1\mathcal H_{\mathfrak C}+a_2\mathcal H_{\mathfrak C}^2$ and $B=b_0+b_1\mathcal H_{\mathfrak C}+\cdots+a_k\mathcal H_{\mathfrak C}^k$ whose coefficients are algebraic series (in fact, $B=\mathcal H_{\mathfrak C_{(\geq 3)}}(1+\mathcal H_{\mathfrak C})$).  The valuation of $A$ and $B$ as series in $x$ are distinct. Indeed, the valuation of $A$ is $1$ (notice that $a_0=x+\delta$ where $\delta$ is either zero or an algebraic series of valuation at least $2$).  Hence, if $B\neq 0$ (when $Ind(\mathfrak C)_{(\geq 3)}$ is non empty) its valuation is at least $3$. 
Since $A$ and $B$ don't have the same valuation, then $A+B$ is not identical to zero.   Being a solution of a non zero polynomial, $\mathcal H_{\mathfrak C}$ is algebraic. With this result and claim \ref{claim:claim2}, $\mathcal H_{\mathfrak C(\mathcal S)}$ is algebraic.  
With this, the proof of
Theorem \ref{theo:algebraic} is complete.
\bigskip

In order to prove our claims, we need the following lemmas (respectively Lemma 15 and Lemma 18 in \cite{A-A}).

\begin{lemma}\label{lem:intersection}
 Let $\mathcal S$ be an indecomposable ordered structure and $\mathfrak A:=(\mathfrak A_x)_{x\in \mathcal S}$, $\mathfrak B:=(\mathfrak B_x)_{x\in \mathcal S}$ be two sequences of subclasses of
ordered binary structures indexed by the elements of $\mathcal S$. If $\mathcal S$ has at least three elements then $$(\underset{x\in \mathcal S}{\oplus}{\mathfrak A_x})\cap
(\underset{x\in \mathcal S}{\oplus}{\mathfrak B_x})=\underset{x\in \mathcal S}{\oplus}({\mathfrak A_x\cap \mathfrak B_x}).$$
If $\mathcal S:=(\{0,1\},\leq,(\rho_i)_{i\in J})$ with $0<1$ then
$$(\mathfrak A_0(\mathcal S)\underset{\mathcal S}{\oplus}\mathfrak A_1)\cap (\mathfrak B_0(\mathcal S)\underset{\mathcal S}{\oplus}\mathfrak B_1)=(\mathfrak A_0(\mathcal S)\cap \mathfrak B_0(\mathcal S))
\underset{\mathcal S}{\oplus}(\mathfrak A_1\cap \mathfrak B_1).$$
\end{lemma}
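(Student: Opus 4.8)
The plan is to establish each of the two displayed equalities by proving the two inclusions separately. The inclusions ``$\supseteq$'' are immediate from the definition of the lexicographic-sum classes: a representation of a structure as a lexicographic sum over $\mathcal S$ with parts lying in $\mathfrak A_x\cap\mathfrak B_x$ is, in particular, one with parts in $\mathfrak A_x$ and one with parts in $\mathfrak B_x$. All the content is therefore in ``$\subseteq$'', and it rests on a single principle: a structure that decomposes as a lexicographic sum over $\mathcal S$ in two ways must do so in essentially the same way, because (i) the underlying partition of its domain into intervals is forced and (ii) the linear order carried by the \emph{ordered} structure $\mathcal S$ forces the labelling of the blocks by the elements of $\mathcal S$.

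Assume first that $\mathcal S$ has at least three elements and let $\mathcal R$ lie in the left-hand side. Fix decompositions $\mathcal R\cong\underset{x\in\mathcal S}{\oplus}\mathcal C_x$ with $\mathcal C_x\in\mathfrak A_x$ and $\mathcal R\cong\underset{x\in\mathcal S}{\oplus}\mathcal D_x$ with $\mathcal D_x\in\mathfrak B_x$; replacing $\mathcal R$ by an isomorphic copy we may assume $\mathcal R=\underset{x\in\mathcal S}{\oplus}\mathcal C_x$ with pairwise disjoint blocks $V(\mathcal C_x)$, and we pick an isomorphism $\psi\colon\underset{x\in\mathcal S}{\oplus}\mathcal D_x\rightarrow\mathcal R$. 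Both $\{V(\mathcal C_x):x\in\mathcal S\}$ and $\{\psi(V(\mathcal D_x)):x\in\mathcal S\}$ exhibit $\mathcal R$ as a lexicographic sum over the indecomposable structure $\mathcal S$, which has at least three elements, so by the uniqueness statement of Proposition \ref{pro:sumlex} they are the same partition of $V(\mathcal R)$. Since each block is an interval of the linear-order component of $\mathcal R$, the linear order of $\mathcal R$ totally orders this common set of blocks, and in a lexicographic sum over $\mathcal S$ that order of the blocks is transported from the linear order of $\mathcal S$ by the labelling $x\mapsto V(\mathcal C_x)$, and equally by $x\mapsto\psi(V(\mathcal D_x))$; as a finite chain admits at most one order-isomorphism onto another, the two labellings agree, i.e. $V(\mathcal C_x)=\psi(V(\mathcal D_x))$ for every $x$. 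Consequently $\psi$ restricts to an isomorphism $\mathcal D_x\rightarrow\mathcal R\restriction_{V(\mathcal C_x)}=\mathcal C_x$, so $\mathcal C_x\in\mathfrak A_x\cap\mathfrak B_x$ for every $x$, whence $\mathcal R\in\underset{x\in\mathcal S}{\oplus}(\mathfrak A_x\cap\mathfrak B_x)$.

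When $\mathcal S=(\{0,1\},\leq,(\rho_i)_{i\in J})$ with $0<1$, the argument is the same except for the source of ingredient (i): a two-block decomposition need not be unique in general, but if $\mathcal R\cong\mathcal C_0\underset{\mathcal S}{\oplus}\mathcal C_1\cong\mathcal D_0\underset{\mathcal S}{\oplus}\mathcal D_1$ with $\mathcal C_0\in\mathfrak A_0(\mathcal S)$ and $\mathcal D_0\in\mathfrak B_0(\mathcal S)$, then both $0$-parts are $\mathcal S$-indecomposable, so Lemma \ref{lem:decomposition} applies to each decomposition and forces the two two-block partitions of $V(\mathcal R)$ to coincide. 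The linear order of $\mathcal R$ then singles out the lower block of this common partition, which by the definition of the sum over $(\{0,1\},0<1)$ is $V(\mathcal C_0)$ from the first decomposition and $\psi(V(\mathcal D_0))$ from the second (with $\psi$ realizing the second decomposition); hence $V(\mathcal C_0)=\psi(V(\mathcal D_0))$ and $V(\mathcal C_1)=\psi(V(\mathcal D_1))$, so $\mathcal C_0\cong\mathcal D_0$ and $\mathcal C_1\cong\mathcal D_1$. Since $\mathcal S$-indecomposability is invariant under isomorphism, $\mathcal C_0\in\mathfrak A_0(\mathcal S)\cap\mathfrak B_0(\mathcal S)$ and $\mathcal C_1\in\mathfrak A_1\cap\mathfrak B_1$, so $\mathcal R$ lies in the right-hand side.

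The only delicate ingredient is (i), the rigidity of the interval decomposition, and for it we merely quote Proposition \ref{pro:sumlex} (when $|V(\mathcal S)|\geq 3$) and Lemma \ref{lem:decomposition} (when $|V(\mathcal S)|=2$, using that the $0$-part is $\mathcal S$-indecomposable). Ingredient (ii) is where the hypothesis ``ordered'' is essential: the linear order turns the common set of blocks into a chain and thereby recovers the labelling by elements of $\mathcal S$ without ambiguity. Everything else is routine bookkeeping with the definition of the lexicographic sum and the fact that the classes involved are closed under isomorphism.
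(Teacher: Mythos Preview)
Your proof is correct and follows essentially the same approach as the paper: the paper's proof consists of the single sentence ``The first equality follows from Proposition \ref{pro:sumlex} and the second one follows from Lemma \ref{lem:decomposition},'' and you invoke precisely these two results for the uniqueness of the interval partition. You additionally spell out why the \emph{ordered} hypothesis pins down the labelling of the blocks by elements of $\mathcal S$, which the paper leaves implicit.
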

\begin{proof}
The first equality follows from Proposition \ref{pro:sumlex} and the second one follows from Lemma \ref{lem:decomposition}.
\end{proof}

\bigskip

Let  $\mathfrak C$ be a  class of finite structures and $\overline {\mathcal B} := \mathcal B_1,\mathcal B_2,...,\mathcal B_l$ be a sequence of finite structures, we will set $\mathfrak C < \overline {\mathcal B}>:= \mathfrak C < \mathcal B_1,\mathcal B_2,...,\mathcal B_l>:=Forb(\{\mathcal B_1,\mathcal B_2,...,\mathcal B_l\})
\cap \mathfrak C.$

\noindent If $\mathfrak C$ is hereditary, a proper hereditary subclass $\mathfrak C'$ of
$\mathfrak C$ is \emph{strong} if every bound of $\mathfrak C'$ in $\mathfrak C$ is embeddable in some bound of $\mathfrak C.$ Note that the intersection of strong subclasses is strong.

\medskip
\noindent Let $\mathfrak A:=(\mathfrak A_x)_{x\in \mathcal S}$, where $\mathcal S$ is indecomposable with at least three elements. A \emph{decomposition of a binary structure} $\mathcal B$ \emph{over} $\mathfrak A$ is a map $h:\mathcal B\rightarrow\mathcal S$ such that
$\mathcal B=\underset{x\in \mathcal S \restriction_{range(h)}}{\oplus}{\mathcal B}
\restriction_{h^{-1}(x)}$ and $\mathcal B \restriction_{h^{-1}(x)} \in \mathfrak A_x$ for all $x\in range(h).$ Hence,  each $\mathcal B \restriction_{h^{-1}(x)}$ is an interval of $\mathcal B.$ Let $H_{\mathcal B}$ be the set of all such decompositions of $\mathcal B$.

\begin{lemma}\label{lem:egality}Let $\mathcal S$ be an indecomposable ordered structure, $\mathfrak A:=(\mathfrak A_x)_{x\in \mathcal S}$ be a sequences of subclasses of
ordered binary structures indexed by the elements of $\mathcal S$, $\overline {\mathcal B}:=\mathcal B_1,\mathcal B_2,...,\mathcal B_l$ be a sequence of finite structures, and $\mathfrak C:=(\underset{x\in \mathcal S}{\oplus}{\mathfrak A_x})<\overline {\mathcal B}>$. If $\mathcal S$ has at least three elements then $\mathfrak C$ is a union of sets of the form
$\underset{x\in \mathcal S}{\oplus}\mathfrak D_x$ where each $\mathfrak D_x$ is either $\mathfrak A_x<\overline{\mathcal B}>$ or one of its strong subclasses.
\end{lemma}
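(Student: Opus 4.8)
The plan is to analyze, for a fixed binary structure $\mathcal B$, the set $H_{\mathcal B}$ of all decompositions of $\mathcal B$ over $\mathfrak A=(\mathfrak A_x)_{x\in\mathcal S}$ and to partition $\mathfrak C$ according to which decompositions are available. First I would observe that a structure $\mathcal B'\in\Omega_\mu$ lies in $\underset{x\in\mathcal S}{\oplus}\mathfrak A_x$ precisely when $H_{\mathcal B'}\neq\emptyset$; and since $\mathcal S$ is indecomposable with at least three elements, Proposition~\ref{pro:sumlex} guarantees that the partition of $\mathcal B'$ into the intervals $\mathcal B'\restriction_{h^{-1}(x)}$ is the unique non-trivial partition into strong intervals, so $h$ is essentially unique — more precisely, any two decompositions of $\mathcal B'$ differ only by an automorphism of $\mathcal S$ fixing the partition. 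I would record this rigidity as the first step, since it is what makes the whole indexing bookkeeping work.

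Next I would set up the combinatorial skeleton. For each $x\in\mathcal S$, each bound $\mathcal B_j$ either does or does not embed into members of $\mathfrak A_x$ in a way compatible with being an interval; the right bookkeeping device is: for $\mathcal B'\in\underset{x\in\mathcal S}{\oplus}\mathfrak A_x$ with chosen decomposition $h$, we have $\mathcal B'\in\mathfrak C$ iff no $\mathcal B_j$ embeds into $\mathcal B'$. An embedding of $\mathcal B_j$ into the lexicographic sum $\mathcal B'=\underset{x\in\mathcal S}{\oplus}\mathcal B'\restriction_{h^{-1}(x)}$ decomposes, by Fact~\ref{...} (the correspondence between partitions into intervals and lexicographic decompositions), into a pattern: either $\mathcal B_j$ embeds inside a single block $\mathcal B'\restriction_{h^{-1}(x)}$ — which is excluded exactly when $\mathcal B'\restriction_{h^{-1}(x)}\in\mathfrak A_x<\mathcal B_j>$ — or the image of $\mathcal B_j$ meets at least two blocks, in which case $\mathcal B_j$ itself is a lexicographic sum over an induced substructure of $\mathcal S$ and the ``pieces'' of $\mathcal B_j$ must avoid-embed into the corresponding blocks. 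So for fixed $\overline{\mathcal B}$ there are only finitely many possible ``avoidance patterns'' $\pi$ describing, for each block index $x$, a finite list of structures that $\mathcal B'\restriction_{h^{-1}(x)}$ must forbid. I would then define $\mathfrak D_x^{\pi}$ to be $\mathfrak A_x$ with that finite list of forbidden structures adjoined, i.e. $\mathfrak A_x<\overline{\mathcal B}^{\pi}_x>$ for the appropriate subsequence, note each such $\mathfrak D_x^{\pi}$ is a hereditary subclass of $\mathfrak A_x$ of the form $\mathfrak A_x<\overline{\mathcal B}>$-type or one of its strong subclasses (strongness follows because every bound of $\mathfrak D_x^{\pi}$ in $\mathfrak A_x$ is a piece of some $\mathcal B_j$, hence embeds into that $\mathcal B_j$, which is a bound of $\mathfrak C$ provided $\mathcal B_j$ is chosen minimal; here I would invoke that bounds can always be taken minimal and that the intersection of strong subclasses is strong), and conclude
$$\mathfrak C=\underset{\pi}{\bigcup}\ \underset{x\in\mathcal S}{\oplus}\mathfrak D_x^{\pi},$$
a finite union as desired.

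The step I expect to be the main obstacle is the case analysis showing that the ``multi-block'' embeddings of a bound $\mathcal B_j$ really do reduce to finitely many independent per-block avoidance conditions, compatibly with the rigidity of $h$. The subtlety is that an embedding of $\mathcal B_j$ spanning several blocks imposes a joint constraint — it is forbidden iff for every way of writing $\mathcal B_j$ as a lexicographic sum over an induced substructure $\mathcal S'$ of $\mathcal S$ via some map $h_j$, at least one piece fails to embed into the corresponding block — so the clean statement only emerges after one fixes the decomposition $h$ of $\mathcal B'$ and uses that, because $\mathcal S$ is indecomposable on $\ge 3$ points, the restriction of $h$ to the image of any embedding of $\mathcal B_j$ is itself a decomposition of $\mathcal B_j$ over a subfamily of $\mathfrak A$. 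Once that reduction is in place, distributing the (finitely many) alternatives over the blocks turns the single global condition ``$\mathcal B_j$ does not embed'' into a finite disjunction of products of per-block forbidden-structure conditions, and collecting these over $j=1,\dots,l$ and over the finitely many choices yields the stated finite union of lexicographic sums $\underset{x\in\mathcal S}{\oplus}\mathfrak D_x$. The two-element case of $\mathcal S$ is deliberately excluded from this lemma (it is handled via Lemma~\ref{lem:decomposition} and the $\mathcal S$-indecomposable refinement), so I would not need to treat it here.
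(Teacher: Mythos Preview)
Your proposal is correct and follows essentially the same route as the paper. The paper packages your ``avoidance patterns'' into the explicit formula
\[
\mathfrak C=\bigcap_{h\in H_{\mathcal B}}\;\bigcup_{x\in\mathrm{range}(h)}\;\bigoplus_{y\in\mathcal S}\mathfrak A_y^{(x)},
\]
where $H_{\mathcal B}$ is exactly your set of decompositions of the bound $\mathcal B$ and $\mathfrak A_y^{(x)}$ forbids the single piece $\mathcal B\restriction_{h^{-1}(x)}$ in coordinate $x$; it then distributes intersection over union and applies Lemma~\ref{lem:intersection} to collapse each resulting intersection into a single $\bigoplus_{x\in\mathcal S}\mathfrak D_x$ --- precisely your ``finite disjunction of products of per-block forbidden-structure conditions.'' The paper also treats the single-block embedding as just the trivial decomposition $h$ with $\vert\mathrm{range}(h)\vert=1$ rather than as a separate case, and reduces to $l=1$ with an outer induction, but these are cosmetic differences.
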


\begin{proof}
We  prove the result for $l=1$ and we set $\mathcal B:= \mathcal B_1$. For that we prove that:
\begin{equation} \label{eq:eqstrong}
\mathfrak C=\underset{h\in H_{\mathcal B}}{\bigcap}\;\underset{x\in range(h)}{\bigcup}\underset{y\in \mathcal S}{\oplus}{\mathfrak A^{(x)}_y}
\end{equation}
 where $\mathfrak A^{(x)}_x:=\mathfrak A_x<\mathcal B\restriction_{h^{-1}(x)}>$ and $\mathfrak A^{(x)}_y:=\mathfrak A_y$ for $y\neq x$.

Let's call by $(1)$ (respectively by $(2)$) the left-hand side (respectively the right-hand side) of Equation \ref{eq:eqstrong}. Inclusion $(1)\subseteq (2)$ holds without any assumption. Indeed,
let $\mathcal T$ in $(1)$. We prove that $\mathcal T$ is in $(2)$. If $h$ is a decomposition of $\mathcal B$, we want to find $x\in range(h)$ such that $\mathcal T\in \underset{y\in \mathcal S}{\oplus}
\mathfrak A^{(x)}_y.$
Since $\mathcal T$ is in $(1)$, it has a
decomposition over $\mathcal S.$  Let $h\in H_{\mathcal B}$, since $\mathcal B \nleq \mathcal S$, there exist $x\in \mathcal S\restriction_{range(h)}$ such that $\mathcal B\restriction_{h^{-1}(x)}\nleq
\mathcal T,$ hence
$\mathcal T\in \underset{y\in \mathcal S}{\oplus}\mathfrak A^{(x)}_y.$

\noindent Inclusion $(2)\subseteq (1)$ holds under the assumption that a structure in $(2)$ has a unique decomposition over $\mathcal S$ and that it is ordered, (what means that $\mathcal S$ is rigid, that is $\mathcal S$
has no automorphism distinct from the identity).
Let $\mathcal T$ in $(2)$, then for every $h\in H_{\mathcal B}$ there exist $x_h\in range (h)$ such that $\mathcal T\in \underset{y\in \mathcal S}{\oplus}{\mathfrak A^{(x_h)}_y}$, thus, $\mathcal T\in \underset{h\in H_{\mathcal B}}{\bigcap}\underset
{y\in \mathcal S}{\oplus}{\mathfrak A^{(x_h)}_y}.$
We have $\mathcal T\in \underset{y\in \mathcal S}{\oplus}{\mathfrak A_y}$ because, $\mathfrak A^{(x_h)}_y\subseteq \mathfrak A_y$ for every $h.$ Hence, $\mathcal T=\underset{y\in \mathcal S}{\oplus}\mathcal T_y.$
 We claim that $\mathcal B\nleq \mathcal T$. Suppose $\mathcal B\leq \mathcal T$
let $f$ be an embedding of $\mathcal B$ into $\mathcal T$ and  $h:=pof$, where $p$ is the projection map from $\mathcal T$ into $\mathcal S,$ we must have $\mathcal B\restriction_{h^{-1}(x)}\leq \mathcal T_x$ for $x\in rang(h)$ which is a contradiction with the fact that
$\mathcal T\in \underset{h\in H}{\bigcap}\underset{y\in \mathcal S}{\oplus}{\mathfrak A^{(x_h)}_y}.$

\noindent Using distributivity of intersection over union, we may write $(2)$ as a union of terms, each of which is an intersection of terms like $\underset{y\in \mathcal S}{\oplus}\mathfrak A_y<\mathcal B_x>$, where $\mathcal B_x$ is an interval of $\mathcal B$ such that, there exist a decomposition $h$ of $\mathcal B$ and $\mathcal B_x=\mathcal B \restriction_{h^{-1}(x)}.$
These intersections, by lemma \ref{lem:intersection} and the fact that among all decompositions of $\mathcal B$ are all ones which send $\mathcal B$ into a single element $x$ of $\mathcal S$ have the form
$\underset{x\in \mathcal S}{\oplus}{\mathfrak D_x}$ where each $\mathfrak D_x$ is of the form
$\mathfrak A_x<\mathcal B,\cdots>$ where the structures occurring after $\mathcal B$ (if any) are intervals of $\mathcal B.$ Hence, $\mathfrak D_x$ is either $\mathfrak A_x<\mathcal B>$ or one of its strong subclasses. The case $l>1$ follows by induction.

\end{proof}

\noindent{\bf Proof of Claim \ref{claim:claim1}.}
Since $\mathfrak  A$ is wqo and $\mathfrak C$ is a proper hereditary subclass, we have $\mathfrak C=\mathfrak A <\overline {\mathcal B} >$ for some finite family
$\overline {\mathcal B}:= \mathcal B_1,\mathcal B_2,...,\mathcal B_l$ of elements of $\mathfrak A$. Let $\mathcal S \in Ind(\mathfrak C)_{(\geq 3)}$,   Lemma \ref{lem:egality} asserts that  $\mathfrak C_{\mathcal S}$ is an union
of classes, not necessarily disjoint,    of the form  $\underset{x\in \mathcal S}{\oplus}\mathfrak C_x$ where each $\mathfrak C_x$ is either $\mathfrak C$ or one of its strong subclasses.
The generating series of$\underset{x\in \mathcal S}{\oplus}\mathfrak C_x$ is a monomial in the generating series $\mathcal H_{\mathfrak C}$ of  $\mathfrak C$ whose coefficient  is a product of generating
series of proper strong
subclasses of $\mathfrak C$. From the induction hypothesis, the generating series of  of these strong subclasses are algebraic series, hence this coefficient is an algebraic series.  Using the principle of inclusion-exclusion, we get that the generating series
$\mathcal H_{\mathfrak C_{\mathcal S}}$ of $\mathfrak C_{\mathcal S}$ is a polynomial in the generating series  $\mathcal H_{\mathfrak C}$  whose coefficients  are algebraic series. Since the
  $\mathfrak C_{\mathcal S}$ 's are pairwise disjoint, the generating series $\mathcal H_{\mathfrak C_{(\geq 3)}}$
is also a polynomial in the generating series  $\mathcal H_{\mathfrak C}$  whose coefficients are  algebraic series.

 \hfill $\Box$

\begin{lemma}\label{lem:twoelements}
If $\mathcal S$ has two elements $0$ and  $1$, $\mathcal S:=(\{0,1\},\leq,(\rho_i)_{i\in J})$ with $0<1$,   then $(\mathfrak A(\mathcal S)\underset{\mathcal S}{\oplus}\mathfrak A)<\overline {\mathcal B}>$ is an union of classes of the form
$({\mathfrak A'(\mathcal S)<\overline{\mathcal B}>})\underset{\mathcal S}{\oplus}(\mathfrak A''<\overline {\mathcal B}>)$, where $\mathfrak A'<\overline{\mathcal B}>$ and $\mathfrak A''<\overline{\mathcal B}>$ are  either  equal to
$\mathfrak A<\overline{\mathcal B}>$ or to some strong subclasses of $\mathfrak A<\overline{\mathcal B}>$.
\end{lemma}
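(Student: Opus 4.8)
\noindent The plan is to follow the pattern of the proof of Lemma~\ref{lem:egality}, reducing first to the case $l=1$ and writing $\mathcal B:=\mathcal B_1$; the case $l>1$ then follows by induction on $l$ exactly as there, peeling off $\mathcal B_l$, applying the $l=1$ analysis to the hereditary classes produced at the previous stage, and observing that the newly forbidden structures are substructures of $\mathcal B_l$, so that altogether one forbids $\overline{\mathcal B}$ together with proper substructures of the $\mathcal B_j$'s. Before starting I would also note that one may assume, discarding the $\mathcal B_j$ not in $\mathfrak A$ (they impose no constraint, $\mathfrak A$ being hereditary) and those embeddable in another $\mathcal B_{j'}$, that $\overline{\mathcal B}$ is an antichain of elements of $\mathfrak A$; then each $\mathcal B_j$ is a bound of $\mathfrak A<\overline{\mathcal B}>$, a fact that will be used at the very end.

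First I would record the canonical form of a member of $\mathfrak A(\mathcal S)\oplus_{\mathcal S}\mathfrak A$: since $\mathcal S$ is rigid (the two elements are distinguished by $0<1$) and our structures are ordered, Lemma~\ref{lem:decomposition} gives that such a structure $\mathcal T$ decomposes \emph{uniquely} as $\mathcal T=T_0\oplus_{\mathcal S}T_1$ with $T_0\in\mathfrak A(\mathcal S)$, $T_1\in\mathfrak A$ and $T_0$ sitting below $T_1$. The combinatorial heart of the argument is then a description of when $\mathcal B$ embeds into such a $\mathcal T$. Let $S(\mathcal B)$ be the (finite) set of partitions $(B^-,B^+)$ of $V(\mathcal B)$ into two nonempty intervals with $B^-$ below $B^+$ and quotient isomorphic to $\mathcal S$. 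Then $\mathcal B\leq T_0\oplus_{\mathcal S}T_1$ if and only if either $\mathcal B\leq T_0$, or $\mathcal B\leq T_1$, or there is $(B^-,B^+)\in S(\mathcal B)$ with $\mathcal B\restriction_{B^-}\leq T_0$ and $\mathcal B\restriction_{B^+}\leq T_1$. (For the forward direction one takes the preimages of $V(T_0)$ and $V(T_1)$ under an embedding: they are intervals of $\mathcal B$, the order between them is forced, and the relations between them are those of $\mathcal S$, so if both are nonempty they constitute an element of $S(\mathcal B)$; for the backward direction one glues the two partial embeddings, which is legitimate precisely because the between-block relations match $\mathcal S$.)

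Negating, $\mathcal T=T_0\oplus_{\mathcal S}T_1$ lies in $(\mathfrak A(\mathcal S)\oplus_{\mathcal S}\mathfrak A)<\mathcal B>$ iff $\mathcal B\not\leq T_0$, $\mathcal B\not\leq T_1$, and for every $(B^-,B^+)\in S(\mathcal B)$ one has $\mathcal B\restriction_{B^-}\not\leq T_0$ or $\mathcal B\restriction_{B^+}\not\leq T_1$. Distributing this conjunction of disjunctions over all choice functions $\chi\colon S(\mathcal B)\to\{0,1\}$ and commuting the resulting finite intersections of $\mathcal S$-sums past $\oplus_{\mathcal S}$ by the two-element case of Lemma~\ref{lem:intersection} (this is exactly where the uniqueness of the decomposition is needed) one obtains
\[
(\mathfrak A(\mathcal S)\oplus_{\mathcal S}\mathfrak A)<\mathcal B>
\;=\;
\bigcup_{\chi}\;\bigl(\mathfrak A'_{\chi}(\mathcal S)<\mathcal B>\bigr)\;\oplus_{\mathcal S}\;\bigl(\mathfrak A''_{\chi}<\mathcal B>\bigr),
\]
where $\mathfrak A'_{\chi}:=\mathfrak A<\{\mathcal B\restriction_{B^-_s}:\chi(s)=0\}>$ and $\mathfrak A''_{\chi}:=\mathfrak A<\{\mathcal B\restriction_{B^+_s}:\chi(s)=1\}>$, using that restricting to $\mathcal S$-indecomposable members commutes with forbidding. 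Each of $\mathfrak A'_{\chi}<\mathcal B>$ and $\mathfrak A''_{\chi}<\mathcal B>$ is thus obtained from $\mathfrak A<\mathcal B>$ ($=\mathfrak A<\overline{\mathcal B}>$ when $l=1$) by forbidding, in addition, finitely many proper substructures of $\mathcal B$; since forbidding a proper substructure $\mathcal D$ of the bound $\mathcal B$ of $\mathfrak A<\overline{\mathcal B}>$ either changes nothing (when $\mathcal D$ already contains some $\mathcal B_{j'}$) or introduces the single new bound $\mathcal D$, and $\mathcal D\leq\mathcal B$, such a class is $\mathfrak A<\overline{\mathcal B}>$ itself or a strong subclass of it; a finite intersection of strong subclasses being strong, the lemma follows for $l=1$, hence in general by the induction above.

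The step I expect to be the main obstacle is the embedding description of $\mathcal B\leq T_0\oplus_{\mathcal S}T_1$ and the bookkeeping it generates: one must check carefully that in the forward direction the preimage blocks genuinely form an element of $S(\mathcal B)$, must not forget the degenerate splittings where $\mathcal B$ lands entirely in $T_0$ or entirely in $T_1$ (these are what force $\mathcal B$ to be forbidden on \emph{both} sides, so that $\mathfrak A'_{\chi}<\mathcal B>$ and $\mathfrak A''_{\chi}<\mathcal B>$ end up being sub classes of $\mathfrak A<\mathcal B>$ rather than merely of $\mathfrak A$), and must keep in view throughout that the two-element version of Lemma~\ref{lem:intersection} — the only place the $\mathcal S$-indecomposability of the first block, equivalently the uniqueness of the decomposition, enters — is being applied legitimately.
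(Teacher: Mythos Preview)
Your proof is correct and follows essentially the same approach as the paper's: reduce to $l=1$, express $(\mathfrak A(\mathcal S)\oplus_{\mathcal S}\mathfrak A)\langle\mathcal B\rangle$ as an intersection over the possible ways $\mathcal B$ can split across the two blocks, distribute, and apply the two-element case of Lemma~\ref{lem:intersection}. The paper's proof simply records the two-element analogue of Equation~(\ref{eq:eqstrong}) and defers to the argument of Lemma~\ref{lem:egality}; you have unpacked that argument explicitly, separating out the degenerate splittings (your ``$\mathcal B\leq T_0$ or $\mathcal B\leq T_1$'' clauses) from the proper ones in $S(\mathcal B)$, which is a perfectly acceptable and arguably clearer way to arrive at the same union.
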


\begin{proof}
As above we suppose first $l=1$. Equation \ref{eq:eqstrong} yields
\begin{equation} \label{eq:eqbinary}
\big(\mathfrak A(\mathcal S)\underset{\mathcal S}{\oplus}\mathfrak A\big)< {\mathcal B}>=\underset{h\in H_{\mathcal B}}{\bigcap} \left[\big(\mathfrak A(\mathcal S)<\mathcal B\restriction_{h^{-1}(0)}>
\underset{\mathcal S}{\oplus}\mathfrak A) \bigcup  (\mathfrak A(\mathcal S)\underset{\mathcal S} {\oplus}(\mathfrak A<\mathcal B\restriction_{h^{-1}(1)}>))\right]
\end{equation}
An induction take care of the case $l>1$.
\end{proof}

 \noindent{\bf Proof of Claim \ref{claim:claim2}.}
 Let $\mathcal S \in Ind(\mathfrak C)_{(\geq 3)}$.   Lemma \ref{lem:twoelements} asserts that  $\mathfrak C_{\mathcal S}$ is an union of classes, not necessarily disjoint,
of the form ${\mathfrak C'(\mathcal S)}\underset{\mathcal S}{\oplus}\mathfrak C''$, where $\mathfrak C'$ and $\mathfrak C''$ are either  equal to $\mathfrak C$ or to some strong subclasses of $\mathfrak C$.
The generating series of these classes  are of the form $\mathcal H_{\mathfrak C(\mathcal S)}\mathcal H_{\mathfrak C}$ or $\alpha \mathcal H_{\mathfrak C}$ or $\beta \mathcal H_{\mathfrak C(\mathcal S)}$,
where $\alpha$ and $\beta$ are algebraic series. Using the principle of inclusion-exclusion, we get that the generating series $\mathcal H_{\mathfrak C_{\mathcal S}}$ is either of the form
$\mathcal H_{\mathfrak C(\mathcal S)}\mathcal H_{\mathfrak C}$ or of the form $\alpha \mathcal H_{\mathfrak C}+ \beta \mathcal H_{\mathfrak C(\mathcal S)}+\delta$,
where $\alpha, \beta$ and $\delta$ are algebraic series. In particular   $\mathcal H_{\mathfrak C_{\mathcal S}}$ is of the form $\alpha_{\mathcal S} \mathcal H_{\mathfrak C(\mathcal S)}+ \beta_{\mathcal S}$
 where $\alpha_{\mathcal S}$ and $\beta_{\mathcal S}$ are polynomials in $\mathcal H_{\mathfrak C}$ of degree at most $1$ with algebraic series as coefficients. Using  Equation \ref{eq:classeindecbis} we obtain
\begin{equation} \label {eq:almostfinal1}
\mathcal H_{\mathfrak C(\mathcal S)}=\frac{\mathcal H_{\mathfrak C}}{1+ \mathcal H_{\mathfrak C}};
\end{equation}
when all bounds  $\mathcal {B}_i$ of $\mathfrak C$ in $\mathfrak A$ are  $\mathcal S$-indecomposable or

\begin{equation} \label {eq:almostfinal2}
\mathcal H_{\mathfrak C(\mathcal S)}=\frac{(1-\alpha)\mathcal H_{\mathfrak C}-\delta}{1+\beta};
\end{equation}
if at least one bound  $\mathcal B_i$ is not $\mathcal S$-indecomposable.

\hfill $\Box$

%
%
%

The conclusion of Theorem \ref{theo:algebraic} above does not hold with structures which are not necessarily ordered.

\begin{example}

Let  $K_{\infty,\infty}$ be the direct sum of  infinitely many copies of the complete graph on an infinite set.  As it is easy to see the generating function of $Age(K_{\infty,\infty})$ is the integer partition function. This  generating series is not algebraic. However, $Age(K_{\infty,\infty})$ contains no indecomposable member with more than two elements. More generally, note that the class $Forb(P_4)$ of finite cographs contains no indecomposable cograph with more than two vertices and that this class is  not hereditary algebraic. Finite cographs are comparability graphs of serie-parallel posets which in turn are intersection orders of separable bichains.  By Albert-Atkinson's theorem, the class of these bichains  is hereditary algebraic. This tells us that algebraicity is not necessarily preserved by the transformation of a class into an other via a process as above (
 processes of this type are the  free-operators of Fra\"{\i}ss\'e \cite{fraisse}).
\end{example}

\section{A conjecture and some questions}\label{sect:conjecture}
In their paper \cite{A-A}, Albert and Atkinson indicate  that there are  infinite sets of simple  permutations whose sum closure is algebraic  but,  as it turns out,  some hereditary  subclasses are not necessarily algebraic. An example is the collection of decreasing oscillations (see   the end of the section). In order to extend their proof to some other classes, they ask  whether there exists \emph{an infinite set of simple permutations whose sum-closure is well quasi
ordered}. As we indicate in Proposition \ref{criticalwqo} below, the set of exceptional permutations has this property. In fact,  it is hereditary wqo.
We guess that this notion of hereditary wqo is the right concept for extending Albert-Atkinson theorem.

\noindent Exceptional permutations correspond to bichains which are \emph{critical} in the sense of Schmerl and Trotter. Let us recall that a binary structure $\mathcal R$
with domain $E$ is \emph{critical} if $\mathcal R$ is
indecomposable but $\mathcal R\restriction_{E\setminus  \{x\}}$ is not indecomposable for every $x\in E$. Schmerl and Trotter  \cite{S-T} gave a description of
critical posets. They fall into two infinite classes:
$\mathfrak{P}:=\{\mathcal P_n: n\in \mathbb N\}$ and $\mathfrak{P'}:=\{\mathcal P'_n: n\in \mathbb N\}$ where
$\mathcal P_n:=(V_n,\leq_n)$, $V_n:=\{0, \dots, n-1\}\times \{0,1\}$,  $(x,i)<_n(y,j)$ if  $i<j \; \text{and} \; x\leq y$;
$\mathcal P'_n:=(V_n,\leq'_n)$   and $(x,i)<'_n (y,j)$ if $ j\leq i \; \text{and} \; x< y$.

\noindent These posets are two-dimensional. That is, they  are intersection of two linear orders which are respectively $L_{n,1}:=(0,0)<(0,1)<\cdots<(i,0)<(i,1)\cdots <(n-1,0)<(n-1,1)
$ and $L_{n,2}:=(n-1,0) <\cdots<(n-i,0) <\cdots <(0, 0)< (n-1,1)<\cdots  <(n-i,1)\cdots <(0, 1)$ for $\mathcal P_n$ and $L'_{n,1}:=L_{n,1}$
and $L'_{n,2}:=(L_{n,2})^*$  for $\mathcal P'_n.$\medskip

\noindent As it is well known, an indecomposable two-dimensional poset $\mathcal P:=(V, L)$ has a unique realizer (that is there is a  unique pair  $\{L_1,L_2\}$ of linear
orders whose intersection is the order $L$ of $\mathcal P$). Hence,
there are at most two bichains, namely  $(V, L_1,L_2)$ and $(V, L_2,L_1)$ such that $L_1\cap L_2=L$. The  critical posets described above yield four kind
of bichains, namely
$(V_n,L_{n,1},L_{n,2})$, $(V_n,L_{n,2},L_{n,1})$, $(V_n,L_{n,1},(L_{n,2})^*)$ and $(V_n,(L_{n,2})^*,L_{n,1})$. These bichains are critical. Indeed, a bichain is indecomposable if and only if the intersection order
is indecomposable  (\cite{R-Z} for finite bichains and \cite {zaguia} for  infinite bichains).  The isomorphic types of these bichains are described in Albert and Atkinson's paper in terms of permutations of
$1, \dots, 2m$ for $m\geq 2$:\\
$(i)$~~$2.4.6....2m.1.3.5....2m-1.$\\
$(ii)$~~$2m-1.2m-3....1.2m.2m-2....2.$\\
$(iii)$~~$m+1.1.m+2.2....2m.m.$\\
$(iv)$~~$m.2m.m-1.2m-1....1.m+1.$\\

For example, the type of the bichain  $(V_m, L_{m,1}, L_{m,2})$ is  the permutation given in $(iv)$, whereas  the type of $(V_m, L_{m,2}, L_{m,1})$ is its inverse, given in $(ii)$
(enumerate the elements of $V_m$ into the sequence $1, \dots, 2m$, this according to the order $L_{m,1}$, then reorder this sequence according to the order $L_{m,2}$; this  yields the sequence
$\sigma^{-1}:= \sigma^{-1}(1),\dots, \sigma^{-1}(2m)$; according to our definition the type of $(V_m, L_{m,1}, L_{m,2})$ is the permutation $\sigma$, this is the one given in $(iv)$).
For $m=2$, the permutations given in $(i)$ and $(iv)$ coincide with $2413$ whereas those  given in $(ii)$ and $(iii)$ coincide with $3142$; for larger values of $m$, they are all different.

The four classes of indecomposable bichains are obtained from $\mathfrak B:=\{(V_n, L_{n,1}, L_{n,2}): n\in \NN\}$  by exchanging  the two orders in each bichain or by reversing the order of the first one, or by reversing the second one.   Hence the order structure w.r.t. embedabbility of these classes is the same, and it remains the same if we label the elements of these bichains.

\begin{proposition}\label{criticalwqo}
 The class of critical bichains is hereditary wqo.
\end{proposition}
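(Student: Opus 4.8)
The plan is to reduce the statement to one of the four families of critical bichains, to encode the labelled members of that family as finite words over a well-quasi-ordered alphabet, and then to invoke Higman's theorem. As recalled just above, every critical bichain belongs to one of the families $\mathfrak B=\{(V_n,L_{n,1},L_{n,2})\}$, $\{(V_n,L_{n,2},L_{n,1})\}$, $\{(V_n,L_{n,1},(L_{n,2})^{*})\}$, $\{(V_n,(L_{n,2})^{*},L_{n,1})\}$, and — since a map between ordered binary structures is an embedding exactly when it preserves each of the two relations, and preserving a linear order is the same as preserving its reverse — for every poset $\mathcal A$ the four posets $\mathfrak B.\mathcal A$ and the analogues for the other three families, carried by the label functions on the sets $V_n$, are the same partial order (this is the remark preceding the statement). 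As a finite union of hereditary wqo classes is hereditary wqo, it suffices to show that $\mathfrak B$ is hereditary wqo.

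So fix a wqo poset $\mathcal A$; I must prove that $\mathfrak B.\mathcal A$ is wqo. Write $B_n=(V_n,L_{n,1},L_{n,2})$, where $V_n=\{0,\dots,n-1\}\times\{0,1\}$, $L_{n,1}$ enumerates the elements as $(0,0),(0,1),(1,0),(1,1),\dots$ and $L_{n,2}$ enumerates them as $(n-1,0),\dots,(0,0),(n-1,1),\dots,(0,1)$. A pair $(B_n,f)\in\mathfrak B.\mathcal A$ is determined by $n$ and by $f\colon V_n\to\mathcal A$, and I encode it by the word $w(B_n,f)=\bigl(f(0,0),f(0,1)\bigr)\bigl(f(1,0),f(1,1)\bigr)\cdots\bigl(f(n-1,0),f(n-1,1)\bigr)$ of length $n$ over the alphabet $\mathcal A\times\mathcal A$, ordered componentwise. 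Since $\mathcal A$ is wqo so is $\mathcal A\times\mathcal A$, hence by Higman's theorem — in the form $Ch.(\mathcal A\times\mathcal A)=(\mathcal A\times\mathcal A)^{*}$ with the Higman order — the set of these words is wqo.

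The crucial point is that for every strictly increasing map $\iota\colon\{0,\dots,m-1\}\to\{0,\dots,n-1\}$ the map $h_\iota\colon V_m\to V_n$ defined by $h_\iota(k,i)=(\iota(k),i)$ is an embedding of $B_m$ into $B_n$. This is a routine case check: in each of $L_{n,1}$ and $L_{n,2}$ the relative order of two elements $(k,i)$ and $(l,j)$ is a function of the comparisons between $k$ and $l$ and between $i$ and $j$ only, and these comparisons are preserved when $\iota$ is applied to the first coordinates; I would simply run through the finitely many cases. It follows that if $w(B_m,f)\le w(B_n,g)$ in the Higman order, witnessed by a strictly increasing $\iota$ with $f(k,i)\le g(\iota(k),i)$ for every $(k,i)\in V_m$, then $h_\iota$ is an embedding $B_m\to B_n$ satisfying $f(x)\le g(h_\iota(x))$ for all $x$, i.e. $(B_m,f)\le(B_n,g)$ in $\mathfrak B.\mathcal A$.

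The theorem now follows. If $\{(B_{n_k},f_k)\}_{k\in\NN}$ were an infinite antichain of $\mathfrak B.\mathcal A$, then the words $w(B_{n_k},f_k)$ would be pairwise distinct (the encoding is injective) and, by wqo of $(\mathcal A\times\mathcal A)^{*}$, two of them would be comparable, forcing two of the pairs to be comparable, a contradiction. Moreover $(B_m,f)\le(B_n,g)$ implies $m\le n$, so along any descending chain of $\mathfrak B.\mathcal A$ the index is non-increasing, hence eventually some constant $N$; from that point the chain lies inside $\{B_N\}.\mathcal A$, which coincides with $\mathcal A^{V_N}$ under the coordinatewise order because $B_N$, containing a finite linear order, is rigid and therefore admits only the identity as a self-embedding — and a finite power of a wqo has no infinite descending chain. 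Hence $\mathfrak B.\mathcal A$ is wqo for every wqo $\mathcal A$, so $\mathfrak B$, and with it the class of critical bichains, is hereditary wqo. The only genuine obstacle here is psychological: one should resist trying to classify \emph{all} embeddings between critical bichains; the forward implication ``word-embedding $\Rightarrow$ bichain-embedding'' through the maps $h_\iota$, together with the separate rigidity argument for well-foundedness, is all that is required.
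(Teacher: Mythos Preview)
Your proof is correct and follows essentially the same route as the paper: reduce to the single family $\mathfrak B$, encode a labelled $B_n$ by a length-$n$ word over $\mathcal A\times\mathcal A$, verify that the map $(k,i)\mapsto(\iota(k),i)$ is a bichain embedding whenever $\iota$ is increasing, and invoke Higman's theorem on $(\mathcal A\times\mathcal A)^{*}$. The only cosmetic difference is that the paper packages the conclusion as ``$\mathfrak B.\mathcal A$ is the image of the wqo $(\mathcal A^2)^{*}$ under an order-preserving surjection, hence wqo'', which dispenses with your separate rigidity argument for well-foundedness.
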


\begin{proof}
This class is the union of four classes  hence, in order to prove that it is hereditary wqo, it suffices to prove that each one of these classes is hereditary wqo. According to the observation above, it suffices to prove that one, for example $\mathfrak B$, is hereditary wqo. Let $\mathcal A$ be a wqo poset. We have to prove that $\mathfrak B. \mathcal A$ is wqo. For that, set $\mathcal B:=\mathcal A^2$, where $\mathcal A^2:=\{e:\{0,1\}\rightarrow \mathcal A\}$, and order $\mathcal B$ componentwise. Let $\mathcal B^*$ be the set of all words over the ordered alphabet $\mathcal B$. We define an order preserving map $F$ from $\mathcal B^*$ onto $\mathfrak B. \mathcal A$. This will suffice. Indeed, $\mathcal B$ is  wqo as a product of two wqo sets;  hence,  according to Higman theorem on words over ordered alphabets \cite{higman}, $\mathcal B^*$ is wqo. Since $\mathfrak B. \mathcal A$ is the image of a wqo by an order preserving map,  it is wqo. We define the map  $F$ as follows.  Let $w:=w(0)w(1)\cdots w(n-1)\in \mathcal B^*$. Set $F(w):=(\mathcal R, f_w)\in \mathfrak B. \mathcal A$ where $\mathcal R:= (V_n,  L_{n,1}, L_{n,2})$ and $f_w(i,j):=w(i)(j)$ for $j\in \{0,1\}$.  We observe first that $w\leq w'$ in $\mathcal B^*$ implies $F(w)\leq F(w')$ in $\mathfrak B. \mathcal A$. Indeed, if $w\leq w'$ there is an embedding $h$ of the chain $0<\cdots<n-1$ into the chain $0<\cdots<n'-1$ such that $w(i)\leq w'(h(i))$  for all $i<n$. Let $\overline h:\{0,\dots, n-1\}\times \{0,1\}\rightarrow \{0,\dots, n'-1\}\times \{0,1\}$ defined by setting $\overline{h}(i,j):=(h(i), j)$. As it is easy to check,  $\overline h$ is an embedding of $F(w)$ into  $F(w')$. Next, we note that $F$ is surjective. Indeed, if $(\mathcal R, f)\in \mathfrak B. \mathcal A$ with  $\mathcal R:= (V_n,  L_{n,1}, L_{n,2})$, then the word $w:=w(0)w(1)\cdots w(n-1)$ with $w(i)(j):=f(i,j)$ yields $F(w)=(\mathcal R, f)$. \end{proof}

\noindent With Proposition \ref{prop:wqo}, we have:

\begin{corollary}
 The sum-closure of the class of critical bichains is wqo.
\end{corollary}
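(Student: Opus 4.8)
The plan is to derive the corollary from Proposition \ref{prop:wqo} together with Proposition \ref{criticalwqo}; the one point needing attention is that Proposition \ref{prop:wqo} is stated for a \emph{hereditary} subclass of $Ind(\Omega_I)$, whereas the class of critical bichains is not hereditary. So the key move is to replace the critical bichains by the hereditary class of indecomposables they generate.

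Write $\mathfrak{B}_c$ for the class of critical bichains, each regarded as a reflexive binary structure (two reflexive linear orders on a common domain). First I would record that $\mathfrak{B}_c$ is hereditary wqo by Proposition \ref{criticalwqo}, and that --- since the signature of bichains is finite --- the Fact preceding Theorem \ref{thm:bounds} then gives that $\downarrow\mathfrak{B}_c$, the least hereditary subclass of $\Omega_I$ containing $\mathfrak{B}_c$, is again hereditary wqo. Next set $\mathfrak{D}:=Ind(\downarrow\mathfrak{B}_c)$, the class of indecomposable members of $\downarrow\mathfrak{B}_c$, i.e.\ the indecomposable binary structures embeddable in some critical bichain. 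Then $\mathfrak{D}$ is a hereditary class of $Ind(\Omega_I)$ (an indecomposable structure embeddable in a member of $\mathfrak{D}$ is embeddable in a critical bichain, hence in $\mathfrak{D}$), and it is hereditary wqo because for every wqo poset $\mathcal{A}$ the poset $\mathfrak{D}.\mathcal{A}$ is a subposet of the wqo poset $(\downarrow\mathfrak{B}_c).\mathcal{A}$. By Proposition \ref{prop:wqo}, $\sum\mathfrak{D}$ is therefore hereditary wqo, in particular wqo.

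Finally I would check that $cl(\mathfrak{B}_c)\subseteq\sum\mathfrak{D}$. Every critical bichain is indecomposable and lies in $\downarrow\mathfrak{B}_c$, so $\mathfrak{B}_c\subseteq\mathfrak{D}$; since the sum-closure operator is monotone and the members of $\mathfrak{D}$ are reflexive, Theorem \ref{theo:closed} yields $cl(\mathfrak{B}_c)\subseteq cl(\mathfrak{D})=\sum\mathfrak{D}$. A subclass of a wqo class is wqo, hence $cl(\mathfrak{B}_c)$ is wqo. (In permutation language this says that the wreath-closure of the exceptional permutations is wqo.) There is no genuine obstacle here: the argument is a short assembly of results already established, and the only thing to be careful about is the transition from ``critical bichains are hereditary wqo'' to ``the indecomposables lying below them form a hereditary subclass of $Ind(\Omega_I)$ that is hereditary wqo'', on which Proposition \ref{prop:wqo} can be applied.
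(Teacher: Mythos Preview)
Your argument is correct and follows the same line as the paper, which deduces the corollary in one line from Proposition~\ref{criticalwqo} and Proposition~\ref{prop:wqo}. The only difference is that you take extra care with the hypothesis of Proposition~\ref{prop:wqo}: since the class of critical bichains is not itself a hereditary subclass of $Ind(\Omega_I)$, you pass to $\mathfrak D:=Ind(\downarrow\mathfrak B_c)$ before invoking that proposition, and then recover the conclusion for $cl(\mathfrak B_c)$ by the inclusion $cl(\mathfrak B_c)\subseteq cl(\mathfrak D)=\sum\mathfrak D$. The paper leaves this passage implicit; your version makes it explicit, which is a welcome clarification rather than a different approach.
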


\noindent In \cite{A-A} it is mentioned that this class has finitely many bounds.
The generating series of the class of critical bichains is rational (the class is covered by four chains). According to Corollary 13 of \cite {A-A} their sum-closure is algebraic.

\begin{question}
Is the sum-closure of the class of critical bichains  hereditary algebraic?
\end{question}

We conjecture that the answer is positive. This will be a consequence of a conjecture for hereditary classes of ordered binary structures that we formulate below.


\begin{conjecture}\label{conjec}
If $\mathfrak D$ is a hereditary class of indecomposable ordered binary structures which is hereditary wqo and hereditary algebraic, then its sum-closure is hereditary algebraic.
\end{conjecture}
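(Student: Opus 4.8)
The plan is to carry the inductive argument of Theorem \ref{theo:algebraic} up one level, now permitting $Ind(\mathfrak C)$ to be infinite and spending the two extra hypotheses on $\mathfrak D$ precisely where Theorem \ref{theo:algebraic} spent finiteness. We work, as in this section, inside $\Gamma_d$ for a fixed $d$ (so only finitely many structures of each size) and discard the empty structure. Put $\mathfrak A:=cl(\mathfrak D)$; by Theorem \ref{theo:closed}, $\mathfrak A=\sum\mathfrak D$, and by Proposition \ref{prop:wqo}, $\mathfrak A$ is hereditary wqo. Hence every hereditary $\mathfrak C\subseteq\mathfrak A$ is hereditary wqo, has finitely many bounds, and --- since the poset $\mathfrak A$ is wqo --- equals $\mathfrak A<\overline{\mathcal B}>$ for a finite family $\overline{\mathcal B}:=\mathcal B_1,\dots,\mathcal B_l$ of members of $\mathfrak A$ (the minimal generators of the up-set $\mathfrak A\setminus\mathfrak C$). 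As a wqo poset has its down-sets well founded under inclusion, it suffices to rule out a minimal hereditary $\mathfrak C\subseteq\mathfrak A$ for which the strengthened assertion --- that $\mathfrak C$ and each $\mathfrak C(\mathcal S)$, $\mathcal S\in Ind(\mathfrak C)_{(2)}$, is algebraic --- fails; for such $\mathfrak C$ all proper hereditary subclasses satisfy it. If $\mathfrak C=\sum Ind(\mathfrak C)$, then $Ind(\mathfrak C)$ is a hereditary subclass of $\mathfrak D$, hence algebraic, and Corollary \ref{cor:wqoalgebraic} gives the conclusion; so we may assume $\overline{\mathcal B}\neq\emptyset$.

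Next I would reproduce the decomposition of Lemma \ref{lem:union}: $\mathfrak C=\{\mathbf{1}\}\cup\bigcup_{\mathcal S\in Ind(\mathfrak C)_{(\geq 2)}}\mathfrak C_{\mathcal S}$, the $\mathfrak C_{\mathcal S}$ pairwise disjoint. There are at most $4^d$ two-element members of $\Gamma_d$, so $Ind(\mathfrak C)_{(2)}$ is finite and the proof of Claim \ref{claim:claim2} applies verbatim: Lemma \ref{lem:twoelements} together with the induction hypothesis on the strong subclasses occurring there gives, for each such $\mathcal S$, $\mathcal H_{\mathfrak C(\mathcal S)}=\mathcal H_{\mathfrak C}/(1+\mathcal H_{\mathfrak C})$ or $((1-\alpha)\mathcal H_{\mathfrak C}-\delta)/(1+\beta)$ with $\alpha,\beta,\delta$ algebraic, so $\mathcal H_{\mathfrak C_{(2)}}$ is a rational expression in $\mathcal H_{\mathfrak C}$ with algebraic coefficients. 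One also records that every strong subclass produced by Lemmas \ref{lem:egality} and \ref{lem:twoelements} is $\mathfrak C$ with some intervals of the finitely many bounds additionally forbidden, so only finitely many distinct proper hereditary subclasses $\mathfrak C^{(1)},\dots,\mathfrak C^{(m)}$ arise in the whole argument; by induction all $\mathcal H_{\mathfrak C^{(i)}}$ and $\mathcal H_{\mathfrak C^{(i)}(\mathcal S)}$ are algebraic.

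The crux is $\mathcal H_{\mathfrak C_{(\geq 3)}}=\sum_{\mathcal S\in Ind(\mathfrak C)_{(\geq 3)}}\mathcal H_{\mathfrak C_{\mathcal S}}$, which in Theorem \ref{theo:algebraic} was a polynomial in $\mathcal H_{\mathfrak C}$ only because the sum was finite. I would partition $Ind(\mathfrak C)_{(\geq 3)}$ by \emph{bound-interaction type}: with $\mathcal Q$ the finite set of all quotients (partitions into intervals) of all the $\mathcal B_j$, the type of $\mathcal S$ records which members of $\mathcal Q$ embed into $\mathcal S$. There are finitely many types, and the class of $\mathcal S$ of a given type is a finite Boolean combination of hereditary subclasses of the hereditary algebraic $\mathfrak D$, so by Lemma \ref{lem:algebraic} and the identity $\mathcal H_{\mathfrak E\cap\mathfrak F}=\mathcal H_{\mathfrak E}+\mathcal H_{\mathfrak F}-\mathcal H_{\mathfrak E\cup\mathfrak F}$ its generating series $\mathcal K_t$ is algebraic, of valuation $\geq 3$. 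If, for $\mathcal S$ of type $t$, a uniform version of Lemma \ref{lem:egality} expresses $\mathfrak C_{\mathcal S}$ as a union over a $t$-dependent but $\mathcal S$-independent index set of classes $\bigoplus_{x\in\mathcal S}\mathfrak D_x$ in which a fixed number $c_t$ of the $\mathfrak D_x$ are among $\mathfrak C^{(1)},\dots,\mathfrak C^{(m)}$ and the rest equal $\mathfrak C$, then inclusion--exclusion gives $\mathcal H_{\mathfrak C_{\mathcal S}}=\gamma_t\,\mathcal H_{\mathfrak C}^{\,|\mathcal S|-c_t}$ with $\gamma_t$ a fixed algebraic series, whence $\sum_{\mathcal S\text{ of type }t}\mathcal H_{\mathfrak C_{\mathcal S}}=\gamma_t\,\mathcal H_{\mathfrak C}^{-c_t}\,\mathcal K_t(\mathcal H_{\mathfrak C})$ is a well-defined series (valuations permit), algebraic in $\mathcal H_{\mathfrak C}$ with algebraic coefficients. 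Substituting $\mathcal H_{\mathfrak C_{(2)}}$ and $\mathcal H_{\mathfrak C_{(\geq 3)}}$ into $\mathcal H_{\mathfrak C}=x+\mathcal H_{\mathfrak C_{(2)}}+\mathcal H_{\mathfrak C_{(\geq 3)}}$ exhibits $\mathcal H_{\mathfrak C}$ as built from $x$ and finitely many algebraic series by rational operations and algebraic substitutions, hence algebraic; the valuation argument of Theorem \ref{theo:algebraic} (the $x$ summand has valuation $1$, every other summand valuation $\geq 2$, the $(\geq 3)$-part valuation $\geq 3$) shows the resulting functional equation genuinely determines $\mathcal H_{\mathfrak C}$. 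Then, by the formulas of Claim \ref{claim:claim2}, each $\mathcal H_{\mathfrak C(\mathcal S)}$ is algebraic as well --- contradicting minimality.

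The step I expect to be the real obstacle --- and the reason this remains a conjecture --- is that \emph{uniform version of Lemma \ref{lem:egality}}: one must show that, as $\mathcal S$ runs over the infinite class $Ind(\mathfrak C)_{(\geq 3)}$, the decomposition of $\mathfrak C_{\mathcal S}=(\bigoplus_{x\in\mathcal S}\mathfrak A)<\overline{\mathcal B}>$ depends on $\mathcal S$ only through its type, with a \emph{fixed} number $c_t$ of restricted blocks and a fixed combinatorial shape. The difficulty is that a quotient $\mathcal Q$ of a bound may embed into $\mathcal S$ at many places and in many ways, so distributing $\bigcap_h\bigcup_x$ over all decompositions $h$ produces a union whose size, and the number of simultaneously restricted blocks, could a priori grow with $|\mathcal S|$; controlling this appears to need a pumping/wqo argument on the finite family of bounds (roughly: once $\mathcal S$ is large the embedded bound-quotients cannot all be spread out, so the restricted blocks can be confined to a bounded region) or a considerably finer bookkeeping of which blocks must avoid which interval-pieces of which bounds. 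When $\mathfrak D$ is finite this uniformity is automatic and one recovers Theorem \ref{theo:algebraic}; supplying it in general is exactly what is missing.
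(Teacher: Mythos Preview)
This statement is Conjecture~\ref{conjec}; the paper offers no proof. Your proposal is not a proof either, and you say so yourself in the final paragraph. So there is nothing in the paper to compare against: what you have written is a well-structured programme that isolates precisely the step where the argument of Theorem~\ref{theo:algebraic} breaks when $Ind(\mathfrak C)$ becomes infinite.

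Your reductions are sound. The base case $\mathfrak C=\sum Ind(\mathfrak C)$ is handled by Corollary~\ref{cor:wqoalgebraic}, which is stated for an arbitrary (not necessarily finite) algebraic $\mathfrak D$; the well-foundedness of hereditary subclasses under inclusion (from hereditary wqo of $\mathfrak A$ via Proposition~\ref{prop:wqo}) legitimises the induction; the treatment of $Ind(\mathfrak C)_{(2)}$ carries over verbatim since there are at most $4^{d}$ such $\mathcal S$; and your observation that the strong subclasses produced by Lemmas~\ref{lem:egality} and~\ref{lem:twoelements} form a finite list $\mathfrak C^{(1)},\dots,\mathfrak C^{(m)}$ (they are obtained by forbidding intervals of the finitely many bounds) is correct and useful. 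The algebraicity of each $\mathcal K_t$ also follows, since the class of indecomposables avoiding a prescribed set of bound-quotients is a hereditary subclass of the hereditary algebraic $\mathfrak D$.

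The gap is exactly the one you name, but it is sharper than your phrasing suggests. Your ``bound-interaction type'' records only \emph{which} quotients of the bounds embed into $\mathcal S$, not \emph{how many times} or \emph{in what overlap pattern}. In Equation~(\ref{eq:eqstrong}) the outer intersection is over \emph{all} decompositions $h\in H_{\mathcal B}$, and the inclusion--exclusion that follows depends on the full incidence structure of the images $h^{-1}(x)$ inside $\mathcal S$. Two structures $\mathcal S,\mathcal S'$ of the same type can admit different numbers of embeddings of the same quotient, and the resulting unions can intersect differently; hence neither the number $c_t$ of restricted coordinates nor the coefficient $\gamma_t$ is determined by the type alone. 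To make the formula $\mathcal H_{\mathfrak C_{\mathcal S}}=\gamma_t\,\mathcal H_{\mathfrak C}^{|\mathcal S|-c_t}$ uniform one would need an invariant of $\mathcal S$ that pins down the entire inclusion--exclusion computation and that still partitions $Ind(\mathfrak C)_{(\geq 3)}$ into finitely many classes each with an algebraic generating series. Nothing in the hypotheses ``hereditary wqo'' and ``hereditary algebraic'' obviously supplies such a refinement, and your closing remark that a pumping argument on the bounds might do it is a reasonable guess but not an argument. In short: your outline is the natural one, the obstacle you flag is real, and it is the reason the statement is recorded as a conjecture rather than a theorem.
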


\noindent The requirement  that $\mathfrak D$ is wqo will not suffice in Conjecture \ref{conjec}.

 Indeed, let  $\mathcal P_{\ZZ}$ be the doubly infinite path whose vertex set is $\ZZ$ and edge set  $E:=\{(n,m)\in \ZZ\times \ZZ: \vert n-m\vert=1\}$. The edge set $E$ has two transitive orientations,
e.g. $P:=\{(n,m)\in \ZZ\times \ZZ: \vert n-m\vert=1 \; \text{and}\; n\;  \text {is  even}\}$ and its dual $P^*$. As an order, $P$ is the intersection of the linear orders $L_1:=\cdots<2n<2n-1<2(n+1)<2n+1<\cdots$
and $L_2:=\cdots<2(n+1)<2n+3<2n<2n+1<\cdots.$ Let $\mathcal C:= (\ZZ, L_1,L_2)$ and $\mathfrak D:=Ind(\mathcal C)$.

 \begin{lemma}  $\mathfrak D$ is wqo but not hereditary wqo.
 \end{lemma}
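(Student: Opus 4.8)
The plan is to first pin down exactly which ordered structures lie in $\mathfrak D=Ind(\mathcal C)$, and then to treat the two assertions separately. A finite induced substructure of $\mathcal C$ is $\mathcal C\restriction_S$ for a finite $S\subseteq\ZZ$, and its intersection order is $P\restriction_S$; since a bichain is indecomposable if and only if its intersection order is (\cite{R-Z}), I must decide when $P\restriction_S$ is indecomposable. The comparability graph of $P$ is the doubly infinite path $\mathcal P_\ZZ$, so the comparability graph of $P\restriction_S$ is the subgraph of $\mathcal P_\ZZ$ induced on $S$, a disjoint union of finite paths, one per maximal block of consecutive integers of $S$. If $S$ meets at least two such blocks, $P\restriction_S$ is a parallel sum of at least two non-empty posets, so one summand (or, if all are singletons, any pair of points) is a non-trivial module and $P\restriction_S$ is decomposable unless $\vert S\vert\le 2$; if $S$ is a single block $\{a,\dots,b\}$, then $P\restriction_S$ is the fence on $b-a+1$ vertices, which is indecomposable precisely when $b-a+1\ge 4$ (the three-element fence being a $\vee$ or a $\wedge$, hence decomposable). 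Thus, up to isomorphism, $\mathfrak D$ consists of finitely many structures of size at most $2$ together with the fences $F_n:=\mathcal C\restriction_{\{0,\dots,n-1\}}$ and $\mathcal C\restriction_{\{1,\dots,n\}}$ for $n\ge 4$; since $n\mapsto n+2$ is an automorphism of $\mathcal C$, there are for each $n$ at most two isomorphism types of fence of size $n$, distinguished by the parity of the left endpoint.

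For \emph{$\mathfrak D$ is wqo}, I would argue as follows. Because $\mathcal C$ is preserved by $n\mapsto n+2$, and because $\{0,\dots,n-1\}$ or $\{1,\dots,n\}$ is contained in $\{0,\dots,m-1\}$ whenever $m>n$, every fence of size $n$ embeds (onto a sub-interval of the appropriate parity) into every fence of size $m>n$; likewise the members of size at most $2$ embed into every $F_n$. Hence any antichain of $\mathfrak D$ contains only the boundedly many small structures and, among fences, only fences of one fixed size, of which there are at most two; so $\mathfrak D$ has no infinite antichain, and, consisting of finite structures, no infinite descending chain. Therefore $\mathfrak D$ is wqo.

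For \emph{$\mathfrak D$ is not hereditary wqo}, I would exhibit a suitable labelled antichain. Let $\mathcal A:=\{p,q\}$ with $p,q$ incomparable; being finite, $\mathcal A$ is wqo. For $n\ge 4$ label $F_n=\mathcal C\restriction_{\{0,\dots,n-1\}}$ by $\ell_n$ with $\ell_n(0)=\ell_n(n-1)=p$ and $\ell_n(i)=q$ otherwise, and consider $\{(F_n,\ell_n):n\ge 4\}\subseteq\mathfrak D.\mathcal A$. For $n<m$ one has $\vert F_m\vert>\vert F_n\vert$, so $(F_m,\ell_m)\not\le(F_n,\ell_n)$. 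Conversely, an embedding $h\colon F_n\to F_m$ carries $F_n$ isomorphically onto $\mathcal C\restriction_T$ with $T=h(\{0,\dots,n-1\})$, so $P\restriction_T$ is isomorphic to the fence on $n$ points; as the comparability graph of $P\restriction_T$ is the subgraph of $\mathcal P_\ZZ$ induced on $T$ and must be a path on $n$ vertices, $T$ is an interval of $\ZZ$ and the induced graph isomorphism sends the two endpoints $\{0,n-1\}$ of the fence to $\{\min T,\max T\}$, whence the two images lie at graph-distance $n-1$. But the only $p$-labelled vertices of $F_m$ are its endpoints $0$ and $m-1$, at distance $m-1>n-1$; so $h$ cannot send both $0$ and $n-1$ to $p$-labelled vertices, i.e. $\ell_n(x)=p$ while $\ell_m(h(x))=q$ for some $x\in\{0,n-1\}$, and since $p\not\le q$ the map $h$ is not label-preserving. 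Hence the family is an infinite antichain, $\mathfrak D.\mathcal A$ is not wqo, and $\mathfrak D$ is not hereditary wqo.

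The main obstacle is the first step: correctly enumerating $Ind(\mathcal C)$ — in particular observing that only the ``block'' restrictions $\mathcal C\restriction_S$ survive indecomposability and that there are at most two isomorphism types of each size — and, within the second assertion, the point that a bichain embedding between fences is forced (via the comparability graph being a path) to be an isometry onto a sub-interval, which is precisely what makes the endpoint-versus-label argument work.
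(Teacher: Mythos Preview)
Your proof is correct and follows essentially the same approach as the paper: you identify the indecomposable restrictions of $\mathcal C$ as the restrictions to intervals of $\ZZ$ of size $\neq 3$, show $\mathfrak D$ is covered by two chains (hence wqo), and exhibit an infinite antichain in $\mathfrak D.\underline{2}$ by labelling the endpoints of the fences. The only cosmetic difference is that the paper routes the ``not hereditary wqo'' part through the order-preserving association $\mathfrak D.\mathcal A\to\text{Graphs}.\mathcal A$ sending each bichain to the comparability graph of its intersection order (a path), and then invokes the standard fact that endpoint-labelled paths form an infinite antichain of graphs, whereas you carry out the same endpoint-labelling argument directly on the bichains; the paper also describes the two isomorphism types per size as $\mathcal C_n$ and $\mathcal C_n^*$ rather than via parity of the left endpoint, but this is an equivalent parametrisation.
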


 \begin{proof}Members of $\mathfrak D$ of size $n$ are obtained by restricting $\mathcal C$ to intervals of size $n$, $n\not =3$,  of the chain $(\ZZ, \leq)$ (observe first that the graph
$P_\ZZ$ is indecomposable as  all its restrictions to intervals of size different from $3$ of the chain $(\ZZ, \leq)$ and furthermore there are no others indecomposable restrictions; next,
use the fact that the indecomposability of a comparability graph amounts to the indecomposability of its orientations \cite{Ke}, and that the indecomposability of a two-dimensional poset amounts
to the indecomposability of the bichains associated with the order \cite {zaguia}).
Up to isomorphy, there are two indecomposable bichains of size $n$, $n\not=3$, namely
$\mathcal C_n:=\mathcal C_{\restriction \{0, \dots, n-1\}}$ and $\mathcal {C}_n^*:=\mathcal C^*_{\restriction \{0, \dots, n-1\}}$ where $\mathcal C^*:= (\ZZ, {L^*}_1,{L^*}_2)$.
These two bichains  embed all members of $\mathfrak D$ having size less than $n$. Being covered by two chains, $\mathfrak D$ is wqo. To see that $\mathfrak D$ is not hereditary wqo, we may
associate  to each  indecomposable member of $\mathfrak D$  the comparability graph of the intersection of the two orders and observe that this association  preserves the embeddability relation,
even  though label are added.  The  class of graphs obtained from this association consists of  paths of size distinct from $3$. It is not hereditary wqo. In fact, as it is immediate to see,
if a class  $\mathfrak G$ of graphs  contains infinitely many paths of distinct sizes, then $\mathfrak G. 2$  is  not wqo. Indeed, if we label  the end vertices of each path by $1$ and label the
other vertices  by $0$,  we obtain an infinite antichain. Thus $\mathfrak D.2$ is not wqo. \end{proof}

%

The generating series of $\mathfrak D$ is rational (its generating function is $\dfrac{x+x^2}{1-x}$).  In fact, $\mathfrak D$ is hereditary algebraic (every hereditary subclass of $\mathfrak D$ is finite).
By Corollary 13 of \cite {A-A},  the sum-closure $\sum \mathfrak D$ is algebraic. (in fact, if  $D$ is the generating function of $\sum \mathfrak D,$ then $2D^5+2D^4-D^3+(2-x)D^2-D+x=0.$).
But $\sum \mathfrak D$ is not hereditary algebraic. For that,  it suffices to observe that it is not  wqo  and to  apply  Lemma \ref{lem:wqo}. The fact that $\sum \mathfrak D$ is not wqo is because
we may embed the poset $\mathfrak D.\underline{2}$ into $\sum \mathfrak D$ via an order preserving map. A simpler argument consist to observe first that the family $(G_n)_{n\in \NN}$, where $G_n$
is the graph obtained from the $n$-vertex path $P_n$ by replacing its end-vertices by a two-vertex independent set, is an antichain, next that these graphs are comparability graphs associated to members
of $\mathfrak D$.

The permutations corresponding to the members of $\mathfrak D$ are called \emph{decreasing oscillations}. They have been the object of several studies:

The downward closure
$\downarrow\mathfrak D$ is $Age (\mathcal C)$,  the age of $\mathcal C$; this age has four obstructions,  it is  rational: the generating series is $\dfrac{1-x}{1-2x-x^{3}}$,
the generating function being  the sequence A05298 of \cite {Sloane}, starting by 1, 1, 2, 5, 11, 24. For all of this  see \cite{brignall-al}.

\subsection{Questions.}
Is it true that:
\begin{enumerate}

%

\item a hereditary class of indecomposable ordered binary structures $\mathfrak D$ is hereditary wqo whenever its sum closure   is hereditary algebraic?
\item  the generating series of a hereditary class of relational structures is rational whenever the profile of this class is bounded by a polynomial? This is true for graphs \cite{B-B} and tournaments \cite{B-P}.

\item the profile of a wqo hereditary class of relational structures is bounded above by some exponential?

\end{enumerate}

\end{document}